\documentclass[a4paper]{amsart}
\usepackage{geometry}
\usepackage{nicefrac}
\usepackage{caption}
\usepackage{kpfonts}
\usepackage{tikz}
\usetikzlibrary{cd}
\usetikzlibrary{decorations.pathreplacing}
\usepackage{ytableau}
\usepackage[enableskew]{youngtab}
\usepackage[utf8]{inputenc}
\usepackage{graphicx}
\usepackage{multirow}
\usepackage{mathrsfs}
\usepackage[title]{appendix}
\usepackage{xcolor}
\usepackage{textcomp}
\usepackage{manyfoot}
\usepackage{booktabs}
\usepackage{algorithm}
\usepackage{algorithmicx}
\usepackage{algpseudocode}
\usepackage{listings}
\usepackage[T1]{polski}
\usepackage[english]{babel}

\newtheorem{theorem}{Theorem}
\newtheorem{proposition}{Proposition}
\usepackage{enumerate}
\usepackage{hyperref}
\usepackage{cleveref}

\newtheorem{example}{Example}
\newtheorem{remark}{Remark}
\newtheorem{definition}{Definition}

\newtheorem{lemma}{Lemma}
\newtheorem{corollary}{Corollary}
\newtheorem*{theorem*}{Theorem}
\newtheorem*{proposition*}{Proposition}

\def\R{{\mathbb{R}}}
\def\N{{\mathbb{N}}}
\def\C{{\mathbb{C}}}
\def\Q{{\mathbb{Q}}}
\def\Z{{\mathbb{Z}}}

\DeclareMathOperator{\sign}{\mathrm{sign}}

\DeclareMathOperator{\GL}{\mathrm{GL}}

\DeclareMathOperator{\Aut}{\mathrm{Aut}}

\DeclareMathOperator{\trace}{\mathrm{trace}}
\DeclareMathOperator{\id}{\mathrm{id}}

\newcommand{\abs}[1]{\left| #1\right|}

\raggedbottom

\begin{document}

\title{Asymptotics of Plethysm}

\author{Tim Kuppel}
\address{Mathematisches Institut, Universit\"at Bonn, Endenicher Allee 60, Bonn, Germany}
\email{tkuppel@math.uni-bonn.de}
\keywords{Plethysm, Quasi-Polynomial, Ehrhart Polynomial, Asymptotics}

\subjclass[2020]{05A16,05E10,20G05}

\begin{abstract}We study multiplicities $a^{d\lambda}_{\mu,(dk)}$ of highest weight representations $\mathbb S_{d\lambda}(\C^n)$, $\lambda\vdash pk$, of length at most $p$, in $\mathbb{S}_{\mu}(S^{dk}(\C^n))$, $\mu\vdash p$, so called plethysm coefficients, as $d$ tends to $\infty$.
	These are given by quasi-polynomials, which in the case of $S^p(S^{dk}(\C^n))$ can explicitly be computed by Pieri's rule.
	We show that for all but a finite, explicit list of $\lambda$'s the leading term is in fact constant and that 
	\begin{displaymath}
		a^{d\lambda}_{\mu,(dk)}\sim \frac{\dim V_\mu}{p!}c^{d\lambda}_{p,dk}
	\end{displaymath}
	as $d\to\infty$.
	In particular, we answer a conjecture of Kahle and Micha\l ek, going back to Howe.
\end{abstract}

\maketitle

%\pacs[MSC Classification]{05A16,05E10,20G05}

\section{Introduction}\label{sec1}
The operation of \textit{Plethysm} was introduced within the context of symmetric functions by D. E. Littlewood in \cite{Lit36}.

Littlewood's motivation for introducing plethysm was classical invariant theory, namely determining the number of linearly independent homogeneous polynomials of fixed degree $d$ in the coefficients of polynomials in $n$ variables \cite[p. 305]{Lit44},
called \textit{covariants} of degree $d$ and order $n$ \cite[p. 31]{RK84}.
One well known example of such an invariant from high school is the discriminant $\delta=b^2-4ac$ of the polynomial $f(x,y)=ax^2+bxy+cy^2$, which under coordinate change just gets scaled by the determinant of the corresponding base change. This is up to scaling the only invariant of degree 2 and order 2.

Apart from its classical roots, plethysm also has applications in other areas of mathematics, stemming from the connection between symmetric functions and representation theory of the symmetric and general linear group.

The intimate connection between plethysm and representations of general linear groups gives rise to many applications, from whom we are just naming two.

For example, plethysm is used in geometric complexity theory (see \cite{Lan17} for details, in particular \cite[8.8-10]{Lan17} for the use of plethysm and arising problems),
which tries to contribute to the famous P versus NP problem.

Also, many important varieties in algebraic geometry come with an action of a general linear group $\GL(V)$,
but live in ambient spaces like the symmetric power $S^d(V)$ or wedge product $\bigwedge^d(V)$;
for example the Grassmann variety of $d$-dimensional spaces in $V$ lives in $\bigwedge^d(V)$ (cf. \cite[ch. 9]{Fu97}), and the Veronese variety of $d$-th powers of linear forms lives in $S^d(V)$ (cf. \cite[11.3, 13.3]{FH04}).
Hence, studying polynomials on these ambient spaces comes down to understanding the spaces $S^p(S^d(V))$ and $S^p(\bigwedge^d(V))$ together with their $\GL(V)$ action, which is exactly what plethysm is concerned with.

But still, plethysm is poorly understood, and only a few plethysms can be explicitly decomposed.
As a consequence, Stanley in \cite[Problem 9]{Sta99b} asks for a combinatorial description of plethysm coefficients, but this seems out of reach at the moment.
In fact, even deciding whether certain plethysm coefficients are positive is NP-hard \cite[Thm. 3.5]{FI20}.

Now consider the plethysm $\mathbb{S}_\mu(S^{dk}(V))$ for $\mu\vdash p$ a partition, and let $\lambda\vdash pk$.
It is natural to ask with what multiplicity the irreducible $\mathrm{Gl}(V)$-representation $\mathbb{S}_{d\lambda}(V)$ appears; let us denote the multiplicity by $a_{\mu,(dk)}^{d\lambda}$, see \cref{def:a} for the general definition.
Schur-Weyl duality implies that as $S_p\times\mathrm{Gl}(V)$-representation
\begin{displaymath}
	(S^{dk}(V))^{\otimes p}\cong\bigoplus\limits_{\mu\vdash p}V_\mu\otimes\mathbb S_{\mu}(S^{dk}(V)),
\end{displaymath}
where $V_\mu$ is the irreducible representation of the symmetric group $S_p$ corresponding to $\mu\vdash p$.
Moreover, Pieri's rule lets one compute the multiplicity $c^{d\lambda}_{p,dk}$ of $\mathbb{S}_{d\lambda}(V)$ in $(S^{dk}(V))^{\otimes p}$ combinatorially.
Thus, assuming that the multiplicity of $\mathbb{S}_{d\lambda}(V)$ is asymptotically equally distributed over the $\mathbb{S}_{\mu}(S^{dk}(V))$, $\mu\vdash p$, one expects
\begin{displaymath}
	a^{d\lambda}_{\mu,(dk)}\sim \frac{\dim V_\mu}{p!}c^{d\lambda}_{p,dk},
\end{displaymath}
given that $\sum_{\mu\vdash p}\dim V_\mu=p!$.

This note is meant to give a precise formulation and proof of a strengthening of this intuition, which in a slightly modified form was conjectured by Kahle and Micha\l ek in \cite[Conj. 4.3]{KM16} for arbitrary $p$ and all $\lambda$,
and proposed to Kahle and Micha\l ek by Mich\`ele Vergne (private communication with the second author of \cite{KM16}).
In \cite[Lemma 4.1]{KM16} a proof for \glqq non exceptional\grqq $\lambda$  whose parts are all distinct is given.
\begin{theorem*}[Thm. \ref{connection:MainThm}]
	Let $p,k\in\N$, and $\lambda\vdash pk$ with $l(\lambda)\leq p$.
	Then,
	\begin{enumerate}[(i)]
	\item
		if $\lambda$ is of the form (\glqq exceptional\grqq)
		\begin{displaymath}
			(pk),(k^{p}),(a^{p-1}),(b,c^{p-1}),(b^{p-1},c),
		\end{displaymath}
		we either have
		\begin{displaymath}
			a_{(p),(2dk)}^{2d\lambda}=a_{(1^{p}),((2d+1)k)}^{(2d+1)\lambda}=1,\quad a_{(p),((2d+1)dk)}^{(2d+1)\lambda}=a_{(1^p),(2dk)}^{2d\lambda}=0,\quad
			a_{\mu,(dk)}^{d\lambda}=0
		\end{displaymath}
		for all $d\geq 0$ and $\mu\vdash p$, $\mu\neq (p),(1^p)$, or
		\begin{displaymath}
			a_{(p),(dk)}^{d\lambda}=1,\quad
			a_{\mu,(dk)}^{d\lambda}=0
		\end{displaymath}
		for all $d\geq 0$ and $\mu\vdash p$, $\mu\neq(p)$,
	\item
		if $d=4$ and $\lambda=(2k,2k)$, then
		\begin{displaymath}
		\begin{array}{ll}
			a_{(4),(d)}^{(2d^2)}=
			\left\lfloor\frac{2d}{3}\right\rfloor-\frac{d}{2}+
			\begin{cases}
			1&d\mathrm{\ even}\\
			\frac{1}{2}&d\mathrm{\ odd}
			\end{cases},\quad &
			a_{(1^4),(d)}^{(2d^2)}=\left\lfloor\frac{2d}{3}\right\rfloor-\frac{d}{2}+
			\begin{cases}
			0&d\mathrm{\ even}\\
			\frac{1}{2}&d\mathrm{\ odd}
			\end{cases},\\[\bigskipamount]
			a_{(2,2),(d)}^{(2d^2)}=d-\left\lfloor\frac{2d}{3}\right\rfloor,&
			a_{(3,1),(d)}^{(2d^2)}=a_{(2,1^2),(dk)}^{(2d^2)}=0,
		\end{array}
		\end{displaymath}
		and if $\lambda=(b^2,c^2)$ for $b>c$, then $a_{\mu,(dk)}^{d\lambda}=a_{\mu,(d(k-a))}^{((b-c)^2)}$,
	\item
		and else $a_{\mu,(dk)}^{d\lambda}$ is a quasi-polynomial in $d$ of the same (positive) degree as $c_{p,dk}^{d\lambda}$ with constant leading term equal to $\frac{\dim(V_\mu)}{p!}$ times the leading term of $c_{p,dk}^{d\lambda}$
		for every $\mu\vdash p$.
	\end{enumerate}
\end{theorem*}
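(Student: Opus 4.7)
The plan is to exploit that the $\GL(V)$-multiplicity space
\begin{displaymath}
M_{d\lambda}:=\Hom_{\GL(V)}\bigl(\mathbb S_{d\lambda}(V),(S^{dk}(V))^{\otimes p}\bigr)
\end{displaymath}
is naturally an $S_p$-representation (via permutation of tensor factors) of dimension $c^{d\lambda}_{p,dk}$, and that by Schur-Weyl duality the multiplicity of $V_\mu$ in $M_{d\lambda}$ equals $a^{d\lambda}_{\mu,(dk)}$. Character theory then yields
\begin{displaymath}
a^{d\lambda}_{\mu,(dk)}=\tfrac{1}{p!}\sum_{\sigma\in S_p}\chi_\mu(\sigma)\,\chi_{M_{d\lambda}}(\sigma),
\end{displaymath}
whose $\sigma=\id$ summand equals $\frac{\dim V_\mu}{p!}\,c^{d\lambda}_{p,dk}$, precisely the leading behaviour asserted in (iii). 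Thus proving (iii) reduces to showing that every $\sigma\neq\id$ contributes a quasi-polynomial in $d$ of strictly smaller degree than that of $c^{d\lambda}_{p,dk}$.

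To compute the twisted trace for $\sigma$ of cycle type $\nu=(\nu_1,\ldots,\nu_r)$ I would use the Frobenius-type identity
\begin{displaymath}
\chi_{M_{d\lambda}}(\sigma)=\bigl\langle s_{d\lambda},\,\textstyle\prod_i h_{dk}[p_{\nu_i}]\bigr\rangle,
\end{displaymath}
coming from the fact that a single $m$-cycle acts on $W^{\otimes m}$ with trace $\chi_W(g^m)$, i.e.\ via the Adams operation $\psi^m$. A Pieri-type expansion of the right-hand side then realises this as a lattice-point count in a rational polytope $P(\lambda,\nu,k)$ dilated by $d$, parametrising sequences of generalised horizontal strips filling $d\lambda$ subject to divisibility constraints imposed by $\nu$. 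The case $\nu=(1^p)$ recovers the ordinary Pieri polytope $P(\lambda,p,k)$ of some dimension $D$; for $\nu\neq(1^p)$ the divisibility constraints should strictly reduce the dimension, so Ehrhart asymptotics give $\chi_{M_{d\lambda}}(\sigma)=o(c^{d\lambda}_{p,dk})$ — unless $\lambda$ is among the exceptional shapes catalogued in (i), (ii).

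For the exceptional list, a direct inspection shows that the partitions in (i) are exactly those for which $P(\lambda,p,k)$ itself is zero-dimensional: there is at most one way to write $d\lambda$ as a union of $p$ horizontal strips of size $dk$. Accordingly $c^{d\lambda}_{p,dk}\in\{0,1\}$, with the periodic alternation between residues $d\bmod 2$ coming from parity constraints on column heights; the $S_p$-action on the at-most one-dimensional multiplicity space is either trivial or sign, producing the asserted alternation between the $(p)$- and $(1^p)$-component. The case (ii) partitions $(b^2,c^2)$ have a forced common $c$-column subrectangle of height $p$; stripping it implements the stated reduction to $((b-c)^2)$, and the explicit values for $(2k,2k)$ with $p=4$ then drop out of a direct character computation of the resulting one-parameter Pieri family.

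The hard part will be the combinatorial classification: proving that the list in (i) and (ii) \emph{exhausts} the $\lambda$ for which either $P(\lambda,p,k)$ degenerates to dimension zero, or some twisted polytope $P(\lambda,\nu,k)$ with $\nu\neq(1^p)$ matches the dimension of $P(\lambda,(1^p),k)$ with non-vanishing leading coefficient. This sharpens \cite[Lemma 4.1]{KM16}, which treats only partitions with pairwise distinct parts, and will require a careful case analysis of the symmetries of horizontal-strip decompositions for near-rectangular and near-hook shapes.
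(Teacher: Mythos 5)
Your opening steps coincide with the paper's: you identify the multiplicity space $M_{d\lambda}$ as an $S_p$-representation of dimension $c^{d\lambda}_{p,dk}$ via Schur--Weyl, and you use the character average $a^{d\lambda}_{\mu,(dk)}=\tfrac{1}{p!}\sum_\sigma\chi_\mu(\sigma)\chi_{M_{d\lambda}}(\sigma)$ whose identity term gives the asserted leading behaviour; the paper does exactly this in \cref{connection:prop1} and the first lines of the proof of \cref{connection:Howe}. But from there your route is genuinely different, and the difference is where the difficulty lies. The paper never estimates the twisted traces $\chi_{M_{d\lambda}}(\sigma)$ directly. Instead it exploits the graded algebra structure: $\bigoplus_d B_d$ is a finitely generated integral domain with a graded $S_p$-action, so eigenspaces of $\beta_d(\sigma)$ multiply, $R_{d_1}(\sigma)R_{d_2}(\sigma)\subset R_{d_1+d_2}(\sigma)$, and a short argument with the group generated by eigenvalue ratios shows that unless some power of $\sigma$ lies in the ``projective kernel'' $PK$, all $o(\sigma)$-th roots of unity eventually occur as eigenvalues and the eigenspaces equidistribute, forcing $\trace(\beta_d(\sigma))/\dim B_d\to0$. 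One then only needs to rule out $A_p\subset PK$ (resp.\ $V\subset PK$ for $p=4$) by exhibiting a single highest weight vector that is neither symmetric nor skew, done combinatorially via the $h_T$ construction in \cref{connection:prop3}.

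There is a concrete gap in the step you flag as the ``hard part''. You write that $\chi_{M_{d\lambda}}(\sigma)=\bigl\langle s_{d\lambda},\prod_i h_{dk}[p_{\nu_i}]\bigr\rangle$ is ``a lattice-point count in a rational polytope $P(\lambda,\nu,k)$''. For $\nu\ne(1^p)$ this is generally a \emph{signed} sum: the Murnaghan--Nakayama/border-strip expansion of $p_m\cdot s_\nu$ carries signs, so $\prod_i h_{dk}[p_{\nu_i}]$ is not Schur-positive and the coefficient of $s_{d\lambda}$ can be negative (already for $p=2$, the trace of the transposition on $M_{d\lambda}$ lies in $\{-1,0,1\}$). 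So the quantity is not an Ehrhart function of a polytope as stated, and ``divisibility constraints strictly reduce the dimension'' is not yet a proof -- one would need to bound a signed lattice-point sum, not a nonnegative one. The paper's algebraic argument avoids this entirely and also sidesteps the case analysis of twisted polytopes for every $\sigma$: by normality of $PK$ it suffices to find one $\sigma$ and one $T$ with $h_T\cdot\sigma\ne h_T$. Your classification of the exceptional list in (i) as the zero-dimensional Pieri polytopes matches \cref{tensorasymp:prop1}, and your reduction of $(b^2,c^2)$ to $((b-c)^2)$ is the paper's \cref{plethysm:cor1}/\cref{tensorasymp:cor1} (note the parity twist $\mu\leftrightarrow\mu^T$ when $\lambda_p$ is odd, which you omit), but the core asymptotic estimate remains unestablished in your sketch.
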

In fact, the above intuition also informs our proof.
Let us give an outline.
Fix $p,k\in\N$, $\lambda\vdash pk$ with $l(\lambda)\leq p$, a vector space $V$ with $n:=\dim(V)\geq p\geq l(\lambda)$, a maximal unipotent subgroup $U\subset\GL(V)$ as well as a maximal torus $T=(\C^*)^n\subset\GL(V)$, and define
\begin{displaymath}
	T_\lambda:=\{t\in T:t^\lambda=t_1^{\lambda_1}\ldots t_n^{\lambda_n}=1\},\quad
	A_d:=(S^{dk}(V))^{\otimes p},\quad
	B_d:=(A_d^U)^{T_\lambda}
\end{displaymath}
for $d\geq0$.
Then, crucially using Schur-Weyl duality we show the following.
\begin{proposition*}[Prop.\ \ref{connection:prop1}]
	The algebra $\bigoplus_{d\geq0}B_d$
	is finitely generated,
	equipped with a graded action of $S_p$, i.e., we have a group homomorphism
	\begin{displaymath}
		\beta:S_p\to\Aut(\bigoplus_{d\geq 0}B_d)
	\end{displaymath}
	whose image consists of graded algebra homomorphism,
	so that we get representations $\beta_d:S_p\to\GL(B_d)$ for each $d\geq 0$.
	Furthermore,
	the multiplicity of the Specht module $V_\mu$ for some $\mu\vdash p$ in $B_d$ equals $a_{\mu,(dk)}^{d\lambda}$,
	i.e., the multiplicity of $\mathbb{S}_{d\lambda}(V)$ in $\mathbb{S}_\mu(S^{dk}(V))$, and $\dim(B_d)$ equals $c_{p,dk}^{d\lambda}$, i.e., the multiplicity of $\mathbb{S}_{d\lambda}(V)$ in $A_d$.
	Also, $B_d$ is the space of highest weight vectors of weight $d\lambda$ in $(S^{dk}(V))^{\otimes p}$.
\end{proposition*}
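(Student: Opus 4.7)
The plan is to realise $B := \bigoplus_{d\geq 0}B_d$ as successive invariants of the graded algebra $A := \bigoplus_d A_d$ under reductive-group actions, and then invoke Schur-Weyl duality to read off the $S_p$-multiplicities.

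First I would endow $A$ with its graded algebra structure, where multiplication is the coordinatewise product $(a_1\otimes\cdots\otimes a_p)\cdot(b_1\otimes\cdots\otimes b_p) = a_1 b_1\otimes\cdots\otimes a_p b_p$ in the $p$ tensor factors of $S^\bullet V$. Since each multiplication $S^k V\otimes S^k V\to S^{2k} V$ is surjective, $A$ is generated in degree one by the finite-dimensional space $A_1 = (S^k V)^{\otimes p}$, hence finitely generated. It carries commuting rational actions of $\GL(V)$ (diagonal) and $S_p$ (permutation of tensor factors), both preserving the $d$-grading. Had\v{z}iev's theorem then yields finite generation of $A^U$, and since $T_\lambda$ is diagonalisable (hence reductive), $B = (A^U)^{T_\lambda}$ is also finitely generated. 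Because $S_p$ commutes with $\GL(V)$ and thus with $U$ and $T_\lambda$, it restricts to a grading-preserving algebra action on $B$, producing the maps $\beta$ and $\beta_d$.

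Next I would pin down the weight space. By the theorem of the highest weight, $A_d^U = \bigoplus_\nu (A_d^U)_\nu$ as a $T$-module, indexed by dominant weights $\nu$, with $\dim (A_d^U)_\nu$ equal to the multiplicity of $\mathbb{S}_\nu V$ in $A_d$. A character $\nu$ is trivial on $T_\lambda = \ker(\lambda:T\to\C^*)$ precisely when it factors through $T/T_\lambda\hookrightarrow\C^*$, i.e., when $\nu\in\Z\lambda$. Every $T$-weight occurring in $A_d$ has total sum $dkp$ and $|\lambda|=pk$, forcing $\nu=d\lambda$. Hence $B_d=(A_d^U)_{d\lambda}$ is precisely the space of highest weight vectors of weight $d\lambda$ in $A_d$, and in particular $\dim B_d=c_{p,dk}^{d\lambda}$. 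Finally, applying Schur-Weyl duality to $A_d = W^{\otimes p}$ with $W = S^{dk}V$ (using $\GL(V)\subset\GL(W)$) yields the $S_p\times\GL(V)$-isomorphism
\begin{displaymath}
A_d \cong \bigoplus_{\mu\vdash p} V_\mu\otimes\mathbb{S}_\mu(S^{dk}V),
\end{displaymath}
and since the operations of $U$-invariants and of taking the $d\lambda$-weight space commute with the $S_p$-action and act only on the second tensor factor,
\begin{displaymath}
B_d \cong \bigoplus_{\mu\vdash p} V_\mu \otimes (\mathbb{S}_\mu(S^{dk}V)^U)_{d\lambda},
\end{displaymath}
so the multiplicity of $V_\mu$ in $B_d$ equals $\dim(\mathbb{S}_\mu(S^{dk}V)^U)_{d\lambda} = a_{\mu,(dk)}^{d\lambda}$.

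The main point to be careful about is the identification $B_d=(A_d^U)_{d\lambda}$: $T_\lambda$-invariance must single out the \emph{single} highest weight $d\lambda$ rather than the infinite family $\{e\lambda : e\in\Z\}$, which crucially uses $|\lambda|=pk$ matching the $d$-grading on $A_d$. Apart from this weight-matching, the argument is a packaging of standard input --- Had\v{z}iev finite generation of $U$-invariants, Schur-Weyl duality, and the Borel-Weil picture of highest-weight vectors.
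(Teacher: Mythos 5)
Your proof is correct and follows essentially the same route as the paper: finite generation via Grosshans/Had\v{z}iev for $A^U$ and then reductivity of $T_\lambda$, commutation of $S_p$ with $\GL(V)$ for the action, Schur--Weyl duality applied to $A_d=(S^{dk}V)^{\otimes p}$, and identification of $B_d$ with the $d\lambda$-highest-weight space. The one place where you diverge is the identification of which weights survive $T_\lambda$-invariance: the paper exhibits explicit elements of $T_\lambda$ (of the form $(2^{1/\lambda_i},2^{-1/\lambda_j},1,\ldots,1)$) that detect any weight $\pi\neq d\lambda$, whereas you argue abstractly that the characters of $T$ vanishing on $T_\lambda=\ker(\lambda)$ are exactly $\Z\lambda$ and then use $|\nu|=dpk=d|\lambda|$ to force $\nu=d\lambda$; your version is slightly cleaner and avoids the ``without loss of generality $\pi_1>d\lambda_1$, $\pi_2<d\lambda_2$'' phrasing in the paper, which strictly speaking requires choosing the right pair of indices rather than literally $(1,2)$.
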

Moreover, we have the following general result, which is a slight adaptation of \cite{How89} and brings the action of $S_p$ to the forefront.
\begin{theorem*}[Thm. \ref{connection:Howe}]
	Let $\beta:S_p\to\Aut(\bigoplus_{d\geq0}B_d)$ be the group homomorphism giving rise to representations $\beta_d:S_p\to\GL(B_d)$ for each $d\geq 0$ as in \cref{connection:prop1}, and define
	\begin{displaymath}
		PK:=\{\sigma\in S_p:\forall d\geq0\ \exists c\in\C^*:\beta_d(\sigma)=c\cdot \id\}.
	\end{displaymath}
	Then, if $PK=\{1\}$, we have
	\begin{displaymath}
		\lim\limits_{d\to\infty}\frac{f_\mu(d)}{\dim(B_d)}=\frac{\dim(V_\mu)}{p!}
	\end{displaymath}
	 for any $\mu\vdash p$, where $f_\mu(d)$ is defined as the multiplicity of $V_\mu$ in $B_d$.
\end{theorem*}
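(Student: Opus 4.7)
My plan is to reduce $f_\mu(d)$ to traces of $\beta_d(\sigma)$ via the character-theoretic multiplicity formula, and then show that every $\sigma\neq 1$ contributes negligibly. Since the characters of $S_p$ are integer-valued,
\begin{displaymath}
	f_\mu(d)=\frac{1}{p!}\sum_{\sigma\in S_p}\chi_\mu(\sigma)\,\trace(\beta_d(\sigma)),
\end{displaymath}
and the term $\sigma=1$ contributes exactly $\frac{\dim V_\mu}{p!}\dim B_d$. The theorem thus reduces to showing $\trace(\beta_d(\sigma))/\dim B_d\to 0$ as $d\to\infty$ for every $\sigma\in S_p\setminus\{1\}$.

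Fix such a $\sigma$ of order $N$, and let $\zeta=e^{2\pi i/N}$. Since $\beta(\sigma)$ is a graded algebra automorphism of finite order, the algebra $B$ inherits an additional $\Z/N$-grading $B=\bigoplus_{j=0}^{N-1}B^{(j)}$ by $\sigma$-eigenspaces $B^{(j)}=\{b\in B:\beta(\sigma)b=\zeta^j b\}$, compatible with the $\N$-grading. Here $B^{(0)}=B^\sigma$ is a graded subalgebra and each $B^{(j)}$ is a graded $B^\sigma$-submodule. Writing $B_d^{(j)}=B^{(j)}\cap B_d$, I obtain
\begin{displaymath}
	\trace(\beta_d(\sigma))=\sum_{j=0}^{N-1}\zeta^j\dim B_d^{(j)},\qquad \dim B_d=\sum_{j=0}^{N-1}\dim B_d^{(j)},
\end{displaymath}
reducing the problem to a dimension comparison.

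The key algebraic input is that $B$ is a domain: $A=\bigoplus_d(S^{dk}V)^{\otimes p}$ embeds as a graded subalgebra of the polynomial ring on $p$ copies of $V^*$, and $B=(A^U)^{T_\lambda}\subset A$. Moreover, the hypothesis $PK=\{1\}$ forces every non-trivial power of $\sigma$ to act non-trivially on $B$ (otherwise such a power would lie in $PK$), so $\sigma$ acts faithfully on $L:=\mathrm{Frac}(B)$. Setting $K:=L^{\langle\sigma\rangle}=\mathrm{Frac}(B^\sigma)$, the extension $L/K$ is Galois with cyclic group $\langle\sigma\rangle$ of order $N$, and by Kummer theory (working over $\C$), $L=\bigoplus_{j=0}^{N-1}L^{(j)}$ with $\dim_K L^{(j)}=1$ for every $j$. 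Since $B$ is integral over $B^\sigma$, localizing $B$ at $B^\sigma\setminus\{0\}$ gives $L=B\otimes_{B^\sigma}K$, and hence $L^{(j)}=B^{(j)}\otimes_{B^\sigma}K$. Each $B^{(j)}$ is therefore a finitely generated graded $B^\sigma$-module of generic rank exactly one.

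By the standard relationship between generic rank and leading term of the Hilbert quasi-polynomial, $\dim B_d^{(j)}$ is a quasi-polynomial in $d$ of degree $m-1$ (where $m$ is the common Krull dimension of $B$ and $B^\sigma$) whose leading quasi-coefficient equals that of $\dim B_d^{(0)}$, for every $j$. Summing, $\dim B_d$ has leading quasi-coefficient $N$ times that of $\dim B_d^{(0)}$, whereas
\begin{displaymath}
	\trace(\beta_d(\sigma))=\Bigl(\sum_{j=0}^{N-1}\zeta^j\Bigr)\dim B_d^{(0)}+o(\dim B_d^{(0)})=o(\dim B_d)
\end{displaymath}
because $\sum_{j=0}^{N-1}\zeta^j=0$. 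This yields $\trace(\beta_d(\sigma))/\dim B_d\to 0$, as required. I expect the main obstacle to be verifying the generic-rank-one statement in the third paragraph, which requires combining the domain property of $B$ with the faithfulness of $\sigma$ on $\mathrm{Frac}(B)$ provided by $PK=\{1\}$; once these are secured, the conclusion falls out of routine Hilbert--Serre asymptotics.
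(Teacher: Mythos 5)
Your reduction to showing $\trace(\beta_d(\sigma))/\dim B_d\to 0$ for $\sigma\neq 1$ and the eigenspace decomposition $B=\bigoplus_j B^{(j)}$ match the paper's, and your route via Galois/Kummer theory to see that each $B^{(j)}$ is a torsion-free $B^\sigma$-module of generic rank one is a clean way to package the consequences of $PK=\{1\}$ and of $B$ being a domain. However, there is a genuine gap at the Hilbert-function step. You invoke a ``standard relationship'' to conclude that the quasi-polynomials $\dim B^{(j)}_d$ all share the leading quasi-coefficient of $\dim B^{(0)}_d$. This is only valid when the base ring $B^\sigma$ has a \emph{constant} leading quasi-coefficient, and you do not establish that (it does not follow from \cref{connection:cor1}, which gives constancy only for $B$). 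Equal generic rank alone is not enough: over $R=\C[x,y]$ with $\deg x=\deg y=2$, the rank-one modules $R$ and $R(-1)$ have leading quasi-coefficients $\tfrac12,0,\tfrac12,0,\dots$ and $0,\tfrac12,0,\tfrac12,\dots$, so $\dim R_d-\dim R(-1)_d$ has order $d/2$, which is \emph{not} negligible against $\dim(R\oplus R(-1))_d\sim d/2$.

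The paper closes this gap by a different mechanism. From $PK=\{1\}$ and the multiplicativity of spectra $R_{d_1}(\sigma)\cdot R_{d_2}(\sigma)\subset R_{d_1+d_2}(\sigma)$ (which uses the domain property), it produces a \emph{single} degree $d_0$ with $B_{d_0}(\sigma,\omega)\neq 0$ for \emph{every} $o(\sigma)$-th root of unity $\omega$ simultaneously; your Galois argument only yields, for each $\omega$, \emph{some} degree depending on $\omega$. Multiplying by nonzero elements of $B_{d_0}(\sigma,\omega_1^{-1}\omega)$ then gives the uniform bound $\dim B_d\leq o(\sigma)\,\dim B_{d+d_0}(\sigma,\omega)$ for all $\omega$, and combining this with the constancy of $B$'s (not $B^\sigma$'s) leading coefficient from \cref{connection:cor1} forces $\dim B_d(\sigma,\omega)/\dim B_d\to 1/o(\sigma)$. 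So the substantive missing ingredient in your write-up is the simultaneity of nonvanishing across all eigenvalues in a common degree; with it, the concrete multiplication argument replaces the Hilbert-function heuristic, which is not correct as stated.
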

Thus, in order to proof \cref{connection:MainThm} we have to show $PK=\{1\}$.
As $PK$ is normal and the only non-trivial normal subgroup of $S_p$ for $p\neq 4$ is the alternating group $A_p$, the problem reduces to constructing highest weight vectors which are neither symmetric nor skew-symmetric, and in case of $p=4$ to constructing highest weight vectors which are not invariant under the Klein four group $V\subset A_4
$.
This is then carried out in \cref{connection:prop3}.
\subsection*{Acknowledgement}
This is part of my bachelor thesis, written at the university of Konstanz under the supervision of Prof. Micha\l{}ek during the summer of 2021.
Given my advisor's incredible support, time commitment and interest in my studies, it is my greatest pleasure to thank him, although I have to apologize for writing this up far to late!

During my studies, I was supported by the Studienstiftung des deutschen Volkes.

\section{Recollections}
In this section, we recall facts about symmetric functions, Schur functions, the intimate connection between representation theory of $\mathrm{Gl}_n(\C)$ and plethysm, and give an overview about known results concerning the asymptotic behaviour of plethysm coefficients.
In particular, no claim of originality is made, and readers with experience in these fields can safely skip ahead to section 3.
\subsection{Symmetric functions, representation theory of $\mathrm{Gl}_n(\C)$ and plethysm}
Let
\begin{displaymath}
	\Lambda_\Q=\varprojlim\limits_{n\to\infty}\Q[x_1,\ldots,x_n]^{S_n}
\end{displaymath}
be the ring of symmetric functions over $\Q$.
For $r\in\N_0$ we denote by 
\begin{displaymath}
	e_r,\
	h_r,\
	p_r
\end{displaymath}
the \textit{r-th elementary symmetric polynomial,\ r-th complete symmetric function} and \textit{r-th power sum},
as well as for $\lambda=(\lambda_1\geq \ldots\geq\lambda_l)$ a partition
\begin{displaymath}
	e_\lambda=\prod\limits_{i=1}^le_{\lambda_i},\
	h_\lambda=\prod\limits_{i=1}^lh_{\lambda_i},\
	p_\lambda=\prod\limits_{i=1}^lp_{\lambda_i}.
\end{displaymath}
Then, in fact, the $(e_{\lambda})_{\lambda}$ and $(h_{\lambda})_{\lambda}$, indexed over all partitions, form a $\mathbb Q$-basis of $\Lambda_{\mathbb Q}$, and the families $(e_r)_{r\in\N_0}$ and $(h_r)_{r\in\N_0}$ as well as $(p_r)_{r\in\N_0}$ are algebraically independent families generating $\Lambda_{\Q}$, cf. \cite[Thm. 7.4.4, Cor. 7.5.2, Cor. 7.7.2]{Sta99}.
Thus, we can consider the following, expressing a kind of duality between elementary symmetric and complete symmetric functions.
\begin{definition}
	Let $\omega:\Lambda_\Q\to\Lambda_\Q$ be given by requiring
	\begin{displaymath}
		\omega(e_r)=h_r
	\end{displaymath}
	for all $r\geq0$, inducing a graded ring homomorphism $\omega:\Lambda_\Q\to\Lambda_\Q$.
\end{definition}
	In the following, we abbreviate semistandard Young tableau by SSYT.	
	We will in particular consider \textit{Schur functions} 
	\begin{displaymath}
		s_\lambda\coloneqq
		\sum\limits_{T\mathrm{\ SSYT\ of\ shape\ }\lambda}x^T
	\end{displaymath}
	indexed by partitions $\lambda$,
	where we use the convention $s_\emptyset=1$.
	These also form a $\Q$-basis of $\Lambda_\Q$, cf. \cite[Cor. 7.10.6]{Sta99}.
\begin{proposition}[{\cite[I.3, ex. 1]{Mac98}}]\label{symf:cor3}\label{symf:prop3}
	Let $\lambda=(\lambda_1,\ldots,\lambda_l)$ be a partition with dual $\mu=\lambda^T$.
	Then $\omega(s_\lambda)=s_{\mu}$.
	In particular, $\omega$ is an involution, i.e., $\omega^2$ is the identity map.
\end{proposition}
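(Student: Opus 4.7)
The plan is to deduce $\omega(s_\lambda)=s_{\lambda^T}$ from the two Jacobi--Trudi determinantal formulas for Schur functions. Recall that for any partition $\nu$,
\begin{displaymath}
	s_\nu=\det(h_{\nu_i-i+j})_{i,j=1}^{l(\nu)}=\det(e_{\nu^T_i-i+j})_{i,j=1}^{\nu_1},
\end{displaymath}
with the conventions $h_0=e_0=1$ and $h_k=e_k=0$ for $k<0$; both are classical facts from the theory of symmetric functions and admit a uniform proof via the Lindstr\"om--Gessel--Viennot lemma on non-intersecting lattice paths.

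Given these two formulas, the argument is essentially formal. Since $\omega$ is a ring homomorphism it commutes with determinantal expansions; applying it to the second (dual) expression for $s_\lambda$ and invoking $\omega(e_r)=h_r$ yields
\begin{displaymath}
	\omega(s_\lambda)=\det(h_{\lambda^T_i-i+j})_{i,j=1}^{\lambda_1}.
\end{displaymath}
But the right-hand side is precisely the first Jacobi--Trudi formula evaluated at the partition $\lambda^T$, whose length is $\lambda_1$. Hence $\omega(s_\lambda)=s_{\lambda^T}$.

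The involution claim is then an immediate corollary: for every partition $\lambda$ one has $\omega^2(s_\lambda)=\omega(s_{\lambda^T})=s_{(\lambda^T)^T}=s_\lambda$, and since the Schur functions form a $\Q$-basis of $\Lambda_\Q$ this forces $\omega^2=\id$. In particular there is no need to first verify separately that $\omega(h_r)=e_r$.

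The main obstacle is therefore not the deduction above but the invocation of \emph{both} Jacobi--Trudi identities, neither of which has been recorded in the excerpt so far. An alternative that avoids them would proceed via the character-theoretic expansion $s_\lambda=\sum_{\mu\vdash|\lambda|}z_\mu^{-1}\chi^\lambda_\mu p_\mu$ together with the easy identity $\omega(p_r)=(-1)^{r-1}p_r$ and the sign-twist $\chi^{\lambda^T}_\mu=\epsilon_\mu\chi^\lambda_\mu$ on $S_n$-characters; this route is more self-contained on the symmetric-function side but imports more representation-theoretic input and would be heavier than simply quoting Jacobi--Trudi.
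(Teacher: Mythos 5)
Your argument is correct, and the paper itself supplies no proof here (it simply cites Macdonald). Your Jacobi--Trudi route is exactly the one in the cited source; the only small difference is the direction of travel: Macdonald first verifies $\omega(h_r)=e_r$ from the relation $\sum_{r=0}^n(-1)^r e_r h_{n-r}=0$ and then applies $\omega$ to $s_\lambda=\det(h_{\lambda_i-i+j})$, whereas you apply $\omega$ to the dual expression $s_\lambda=\det(e_{\lambda^T_i-i+j})$ using only the defining relation $\omega(e_r)=h_r$, and thereby obtain the involution property as a corollary rather than as a prerequisite --- a slight economy that you correctly flag.
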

In representation theory, Schur functions appear as characters of irreducible highest weight representations.
\begin{theorem}[{\cite[Thm. A2.4]{Sta99}}]\label{symf:thm2}
	Let $V$ be a finite dimensional complex vector space,
	and $\lambda\vdash d$ a partition with $l(\lambda)\leq \dim(V)=n$.
	Then the character of $\mathbb{S}_\lambda(V)$ is $s_\lambda(x_1,\ldots,x_n)$, i.e.,
	\begin{displaymath}
		\chi_{\mathbb{S}_\lambda(V)}(M)=s_\lambda(m_1,\ldots,m_n)
	\end{displaymath}
	for $M\in\GL(V)$, where $m_1,\ldots,m_n$ denote the zeroes of the characteristic polynomial $\det(M-t\id)$ of $M$.
\end{theorem}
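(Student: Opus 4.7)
The plan is to reduce the identity from a statement about arbitrary $M\in\GL(V)$ to a computation on a maximal torus and then invoke the Weyl character formula, whose right-hand side is the bialternant expression for the Schur polynomial. That expression must finally be matched against the combinatorial definition $s_\lambda=\sum_T x^T$ used in the paper.

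First I would observe that both sides are class functions on $\GL(V)$: the left-hand side tautologically, and the right-hand side because $s_\lambda$ is a symmetric polynomial in $m_1,\ldots,m_n$ and thus depends only on the multiset of eigenvalues of $M$. As diagonalizable matrices form a Zariski-dense subset of $\GL(V)$ and both sides are polynomial (recall $\mathbb{S}_\lambda(V)$ is a polynomial representation), it suffices to check equality on a fixed maximal torus $T\subset\GL(V)$, reducing everything to an identity of symmetric polynomials in $t_1,\ldots,t_n$.

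Next, using that $\mathbb{S}_\lambda(V)$ is the irreducible $\GL(V)$-representation of highest weight $\lambda$, which follows from the Weyl construction of Schur functors as images of Young symmetrizers acting on $V^{\otimes|\lambda|}$, the Weyl character formula gives
\begin{displaymath}
\chi_{\mathbb{S}_\lambda(V)}\bigl(\mathrm{diag}(t_1,\ldots,t_n)\bigr)=\frac{\det\bigl(t_i^{\lambda_j+n-j}\bigr)_{1\leq i,j\leq n}}{\det\bigl(t_i^{n-j}\bigr)_{1\leq i,j\leq n}},
\end{displaymath}
with denominator equal to the Vandermonde determinant $\prod_{i<j}(t_i-t_j)$.

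The remaining step is to recognize this bialternant as the tableau sum $\sum_{T\text{ SSYT of shape }\lambda}t^T$. This is the classical Jacobi identity, for which I would cite or reprove one of the standard combinatorial arguments: expanding the numerator via the Lindström-Gessel-Viennot lemma as a signed sum over $n$-tuples of lattice paths and then cancelling crossing tuples by a path-swapping involution leaves exactly the non-intersecting path tuples, which are in explicit bijection with SSYT of shape $\lambda$.

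The main obstacle is the invocation of the Weyl character formula, which is where all the nontrivial representation theory sits; a self-contained derivation would require either the Weyl integration formula together with orthogonality of characters, or a careful algebraic argument identifying the Weyl module with the highest-weight irreducible and computing its dimension. Since the paper only uses this result as a recollection and cites \cite{Sta99}, the remaining work is the purely combinatorial passage from the rational bialternant expression to the SSYT generating function.
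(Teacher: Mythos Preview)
The paper does not prove this theorem at all: it appears in the ``Recollections'' section, is attributed to \cite[Thm.~A2.4]{Sta99}, and is quoted without argument. So there is no proof in the paper to compare against.

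Your outline is nonetheless a correct and standard route to the result. The reduction to the torus via Zariski-density of diagonalizable elements and polynomiality of both sides is fine; invoking the Weyl character formula for the irreducible $\GL(V)$-representation of highest weight $\lambda$ to obtain the bialternant $\det(t_i^{\lambda_j+n-j})/\det(t_i^{n-j})$ is exactly the classical approach; and identifying that bialternant with the combinatorial definition $s_\lambda=\sum_T x^T$ via the Lindstr\"om--Gessel--Viennot lemma (or, equivalently, via the Jacobi--Trudi identity and $h_r=s_{(r)}$) is standard. The only caveat is that you are packaging a substantial amount of representation theory into the phrase ``Weyl character formula'', so if a self-contained proof were required you would need to supply that derivation; for the purposes of this paper, which merely cites the result, your sketch already goes well beyond what is needed.
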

On the representation theoretic side, the involution $\omega$ correspond to the following.
\begin{definition}
	Let $W$ be a polynomial representation of $\GL(V)$,
	and write
	\begin{displaymath}
		W=\bigoplus\limits_{\lambda}\mathbb{S}_\lambda(V)^{\oplus a_\lambda}.
	\end{displaymath}
	 Then, we define
	 \begin{displaymath}
	 	W^T:=\bigoplus\limits_{\lambda}\mathbb{S}_{\lambda^T}(V)^{\oplus a_\lambda},
	 \end{displaymath}
	 i.e., $W^T$ is obtained from $W$ by replacing each irreducible component corresponding to a partition $\lambda$ by the irreducible component corresponding to $\lambda^T$.
\end{definition}
\subsection{What is plethysm?}
\begin{definition}
	Let $g\in\Lambda_\Q$.
	Then, as the power sum symmetric functions $p_1,p_2,\ldots$ generate $\Lambda_\Q$ and are algebraically independent, we get a unique $\Q$-algebra homomorphism $\Lambda_\Q\to\Lambda_\Q,f\mapsto f[g]$ by requiring
	\begin{displaymath}
		p_n[g]:=g(x_1^n,x_2^n,\ldots)
	\end{displaymath}
	for $n\in\N$.
	We call $f[g]$ the \textit{plethysm} or \textit{composition} of $f$ and $g$.
\end{definition}
The following shows that plethysm can also be understood in the context of representations of the general linear group.
\begin{proposition}[{\cite[p. 448]{Sta99}}]\label{plethysm:prop1}
	Let $\lambda,\mu$ be partitions, $V$ a complex finite dimensional vector space, $n=\dim V$.
	Then, the character of the $\GL(V)$-representation $\mathbb{S}_\mu(\mathbb{S}_\lambda(V))$ is 
	\begin{displaymath}
	s_\mu[s_\lambda](x_1,\ldots, x_{n}).
	\end{displaymath}
\end{proposition}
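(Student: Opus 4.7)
The plan is to compute the character of $\mathbb{S}_\mu(\mathbb{S}_\lambda(V))$ in two stages by applying \cref{symf:thm2} twice, and then identify the result with $s_\mu[s_\lambda](x_1,\ldots,x_n)$ via the defining property of plethysm.

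First I would fix a diagonalizable $M\in\GL(V)$ with eigenvalues $x_1,\ldots,x_n$ (this suffices, since the diagonalizable elements are Zariski dense in $\GL(V)$ and the character is a regular function). By \cref{symf:thm2}, the character of $W:=\mathbb{S}_\lambda(V)$ at $M$ is $s_\lambda(x_1,\ldots,x_n)=\sum_T x^T$ with $T$ running over SSYT of shape $\lambda$ with entries in $\{1,\ldots,n\}$. Since $M$ acts diagonalizably on $W$, the list of eigenvalues of $M$ on $W$ is (with multiplicity) exactly the multiset of monomials $y_1,\ldots,y_N$, $N=\dim W$, appearing in that sum. Applying \cref{symf:thm2} a second time to the $\GL(W)$-representation $\mathbb{S}_\mu(W)$ restricted along $\GL(V)\to\GL(W)$ gives
\begin{displaymath}
	\chi_{\mathbb{S}_\mu(\mathbb{S}_\lambda(V))}(M)=s_\mu(y_1,\ldots,y_N).
\end{displaymath}

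It remains to show that $s_\mu(y_1,\ldots,y_N)=s_\mu[s_\lambda](x_1,\ldots,x_n)$. Both sides are values of $\mathbb{Q}$-algebra homomorphisms $\Lambda_\mathbb{Q}\to\mathbb{Q}[x_1,\ldots,x_n]^{S_n}$ evaluated at $s_\mu$: the left-hand side is the substitution homomorphism $f\mapsto f(y_1,\ldots,y_N)$, and the right-hand side is the composition of the plethysm $f\mapsto f[s_\lambda]$ with the projection $\Lambda_\mathbb{Q}\to\mathbb{Q}[x_1,\ldots,x_n]^{S_n}$ that sets $x_{n+1}=x_{n+2}=\cdots=0$. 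Since the $p_r$ algebraically generate $\Lambda_\mathbb{Q}$, it suffices to verify agreement on each $p_r$.

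This check is the heart of the argument. On one hand, $p_r(y_1,\ldots,y_N)=\sum_i y_i^r=\sum_T (x^T)^r$, summed over SSYT $T$ of shape $\lambda$ with entries in $\{1,\ldots,n\}$. On the other hand, by the definition of plethysm, $p_r[s_\lambda]=s_\lambda(x_1^r,x_2^r,\ldots)$, and
\begin{displaymath}
	s_\lambda(x_1^r,x_2^r,\ldots)=\sum_T \prod_i (x_i^r)^{T_i}=\sum_T (x^T)^r,
\end{displaymath}
where $T_i$ denotes the number of $i$'s in $T$. Restricting to $n$ variables gives the same expression as $p_r(y_1,\ldots,y_N)$, so the two homomorphisms agree on all generators and hence on $s_\mu$. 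The main obstacle is purely notational bookkeeping around the substitution $x_{n+1}=\cdots=0$ and ensuring that the multiplicities of monomials in $s_\lambda(x_1,\ldots,x_n)$ exactly match the multiplicities of eigenvalues of $M$ on $\mathbb{S}_\lambda(V)$; the representation-theoretic content has already been extracted into \cref{symf:thm2}.
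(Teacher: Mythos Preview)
Your argument is correct and is essentially the standard proof of this fact. Note, however, that the paper does not actually prove this proposition: it is stated in the \emph{Recollections} section with a citation to \cite[p.~448]{Sta99} and no proof is given. Your write-up supplies exactly the argument one finds in Stanley, namely reducing to diagonalizable $M$, reading off the eigenvalues of $M$ on $\mathbb{S}_\lambda(V)$ as the monomials $x^T$ via \cref{symf:thm2}, applying \cref{symf:thm2} once more to $\mathbb{S}_\mu(W)$, and then checking the identity $s_\mu(y_1,\ldots,y_N)=s_\mu[s_\lambda](x_1,\ldots,x_n)$ on the power-sum generators. One minor point worth making explicit: your second application of \cref{symf:thm2} is literally stated only for $l(\mu)\le\dim W$, but the excluded case is harmless since then both $\mathbb{S}_\mu(W)$ and $s_\mu(y_1,\ldots,y_N)$ vanish.
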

\begin{definition}\label{def:a}
	Let $\lambda,\mu,\pi$ be partitions with $\abs{\pi}=\abs{\lambda}\cdot\abs{\mu}$.
	We then define the \textit{plethysm coefficient} $a_{\mu\lambda}^\pi$ as the coefficient of $s_\pi$ in the plethysm $s_\mu[s_\lambda]$,
	or, by the preceding \cref{plethysm:prop1}, as the multiplicity of $\mathbb{S}_\pi(V)$ in $\mathbb{S}_\mu(\mathbb{S}_\lambda(V))$,
	where $V$ is a finite dimensional complex vector space of dimension at least $l(\pi)$.
\end{definition}
\begin{example}[{\cite[I.8, Ex. 9]{Mac98}}]\label{plethysm:ex1}
	It holds
	\begin{displaymath}
		h_2[h_n]=\sum\limits_{k=0}^{\lfloor\frac{n}{2}\rfloor}s_{(2n-2k,2k)}.
	\end{displaymath}
	We can also interpret this as the decomposition into irreducible $\GL(V)$ representations
	\begin{displaymath}
		S^2(S^n V)=\bigoplus_{\lambda}\mathbb{S}_\lambda(V),
	\end{displaymath}
	where the sum ranges of all partitions of $2n$ into 2 even parts, and $V$ is a complex finite dimensional vector space.
	Since furthermore
	\begin{displaymath}
		(S^n(V))^{\otimes 2}=S^2(S^n(V))\bigoplus\bigwedge^2(S^n(V)),\quad
		(S^n(V))^{\otimes 2}=\bigoplus_{k=0}^n\mathbb{S}_{(2n-k,k)}(V)
	\end{displaymath}
	by Schur-Weyl duality and Pieri's rule respectively,
	we also get
	\begin{displaymath}
		\bigwedge^2(S^n(V))=\bigoplus_{\lambda}\mathbb{S}_\lambda(V),
	\end{displaymath}
	where the sum ranges over all partitions of $\lambda$ into two odd parts.
\end{example}
The connection to representation theory also shows that the plethysm coefficients $a_{\mu,\lambda}^\pi$ for $\lambda,\mu,\pi$ partitions with $\abs{\pi}=\abs{\mu}\cdot\abs{\lambda}$ are non-negative, of which no combinatorial proof is known \cite[p. 499]{Sta99}.
\begin{proposition}[{\cite[I.8, Ex. 1]{Mac98}}]\label{plethysm:prop2}
	Let $f\in\Lambda^m_{\Q}$, $g\in\Lambda^n_{\Q}$. Then,
	\begin{displaymath}
		\omega(f[g])=
		\begin{cases}
			f[\omega(g)]&,\mathrm{\ }n\mathrm{\ even}\\
			\omega(f)[\omega(g)]&,\mathrm{\ }n\mathrm{\ odd}.
		\end{cases}
	\end{displaymath}
\end{proposition}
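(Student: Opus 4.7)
The plan is to reduce to a direct verification in the power-sum basis, exploiting that both $\omega$ and plethysm in the first argument are $\Q$-algebra homomorphisms of $\Lambda_\Q$. The key preliminary fact is $\omega(p_r) = (-1)^{r-1}p_r$ for every $r\geq 1$, a standard consequence of the generating-function identities relating $e_r$, $h_r$ and $p_r$. By multiplicativity this yields $\omega(p_\mu) = (-1)^{\abs{\mu}-l(\mu)}p_\mu$ for any partition $\mu$.

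For the reduction I would observe that with $g$ fixed, $f\mapsto f[g]$ is a $\Q$-algebra endomorphism of $\Lambda_\Q$ by the very definition of plethysm (using that $p_1,p_2,\ldots$ are algebraically independent generators), while $\omega$ is an algebra involution. Hence each of $f\mapsto\omega(f[g])$, $f\mapsto f[\omega(g)]$ and $f\mapsto\omega(f)[\omega(g)]$ is a $\Q$-algebra endomorphism of $\Lambda_\Q$, so it suffices to verify the claimed identity on the algebra generators $f=p_m$, $m\geq1$.

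To execute this check, write $g=\sum_{\mu\vdash n}d_\mu p_\mu$ (using homogeneity of degree $n$). Since $p_m[p_r]=p_{mr}$ and plethysm in the first argument is a ring homomorphism, one obtains $p_m[g]=\sum_\mu d_\mu p_{m\mu}$, where $m\mu:=(m\mu_1,\ldots,m\mu_{l(\mu)})$. Applying the formulas for $\omega$ term-by-term then gives
\begin{displaymath}
\omega(p_m[g])=\sum_\mu d_\mu(-1)^{mn-l(\mu)}p_{m\mu},\qquad p_m[\omega(g)]=\sum_\mu d_\mu(-1)^{n-l(\mu)}p_{m\mu},
\end{displaymath}
together with $\omega(p_m)[\omega(g)]=(-1)^{m-1}p_m[\omega(g)]=\sum_\mu d_\mu(-1)^{m-1+n-l(\mu)}p_{m\mu}$. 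A parity comparison now finishes the argument: if $n$ is even then $mn\equiv n\pmod 2$, matching $p_m[\omega(g)]$; if $n$ is odd then $mn\equiv m-1+n\pmod 2$, matching $\omega(p_m)[\omega(g)]$.

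The only real obstacle is careful sign bookkeeping during the reduction to power sums; once $\omega(p_r)=(-1)^{r-1}p_r$ and the ring-homomorphism property of plethysm in its first argument are in hand, the computation is essentially routine.
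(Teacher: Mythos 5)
Your proof is correct and follows the standard argument: the paper does not prove this statement itself but cites Macdonald \cite[I.8, Ex.\ 1]{Mac98}, where the identity is established in essentially the same way, by reducing to the power-sum basis via $\omega(p_r)=(-1)^{r-1}p_r$ and the fact that all three maps in $f$ are $\Q$-algebra homomorphisms. Your sign bookkeeping checks out in both parity cases.
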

Interpreting this result representation theoretic, we get the following corollary,
as the operation $(-)^T$ corresponds to applying $\omega$ to the character by \cref{symf:cor3}.
\begin{corollary}\label{plethysm:cor2}
	For partitions $\lambda,\mu$ and a complex finite dimensional vector space $V$ of dimension at least $\abs{\lambda}\cdot\abs{\mu}$, we have
	\begin{displaymath}
		\left(\mathbb{S}_\lambda(\mathbb{S}_\mu(V))\right)^T=
		\mathbb{S}_{\lambda^T}(\mathbb{S}_{\mu^T}(V)),
	\end{displaymath}
	if $\abs{\mu}$ is odd, and
	\begin{displaymath}
		\left(\mathbb{S}_\lambda(\mathbb{S}_\mu(V))\right)^T=
		\mathbb{S}_{\lambda}(\mathbb{S}_{\mu^T}(V)),
	\end{displaymath}
	if $\abs{\mu}$ is even.
\end{corollary}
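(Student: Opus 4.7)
The plan is to translate the corollary into an identity of symmetric functions via characters, and then read off the result from Proposition \ref{plethysm:prop2}. By Proposition \ref{plethysm:prop1}, if $n=\dim V$, the character of $\mathbb{S}_\lambda(\mathbb{S}_\mu(V))$ is $s_\lambda[s_\mu](x_1,\ldots,x_n)$, and analogously for $\mathbb{S}_\lambda(\mathbb{S}_{\mu^T}(V))$ and $\mathbb{S}_{\lambda^T}(\mathbb{S}_{\mu^T}(V))$. On the other hand, for any polynomial $\GL(V)$-representation $W=\bigoplus_\pi\mathbb{S}_\pi(V)^{\oplus a_\pi}$ with all $\pi$ of a common size, the character of $W^T$ is $\sum_\pi a_\pi s_{\pi^T}(x_1,\ldots,x_n)$, which by Proposition \ref{symf:prop3} equals $\omega(\chi_W)(x_1,\ldots,x_n)$.

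Next I would apply Proposition \ref{plethysm:prop2} with $f=s_\lambda$ and $g=s_\mu$, observing that $g$ is homogeneous of degree $|\mu|$. Combined with $\omega(s_\nu)=s_{\nu^T}$ from Proposition \ref{symf:prop3}, this yields
\[
	\omega(s_\lambda[s_\mu])=
	\begin{cases}
		s_\lambda[s_{\mu^T}], & |\mu|\text{ even},\\
		s_{\lambda^T}[s_{\mu^T}], & |\mu|\text{ odd},
	\end{cases}
\]
which is exactly the character identity demanded by the two cases of the corollary.

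To close the argument, I need that characters detect the decomposition. Each representation in sight decomposes into $\mathbb{S}_\pi(V)$ with $|\pi|=|\lambda|\cdot|\mu|$, so $l(\pi)\leq|\lambda|\cdot|\mu|\leq n$ by hypothesis; hence the Schur polynomials $s_\pi(x_1,\ldots,x_n)$ appearing are linearly independent, and an equality of characters forces an isomorphism of $\GL(V)$-representations. Together with the preceding step this proves both cases.

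There is no real obstacle here: the result is essentially formal once Propositions \ref{plethysm:prop1}, \ref{plethysm:prop2}, and \ref{symf:prop3} are available. The only point requiring care is the dimension hypothesis on $V$, which is exactly what is needed to prevent a collapse of Schur polynomials in too few variables and to guarantee that characters determine the decomposition into irreducibles.
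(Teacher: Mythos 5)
Your proof is correct and follows the same route as the paper: interpret Proposition \ref{plethysm:prop2} representation-theoretically by passing to characters via Proposition \ref{plethysm:prop1} and using that $(-)^T$ corresponds to applying $\omega$ (Proposition \ref{symf:prop3}). The paper states this in a single sentence preceding the corollary; you merely spell out the character computation and the linear-independence step that the dimension hypothesis guarantees.
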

\begin{example}
	From \cref{plethysm:ex1} we know
	\begin{displaymath}
		S^2(S^2(V))=S^4(V)\oplus S_{(2,2)}(V),
	\end{displaymath}
	so
	\begin{displaymath}
		S^2(\bigwedge^2(V))=\bigwedge^4(V)\oplus S_{(2,2)}(V).
	\end{displaymath}
\end{example}
\subsection{What is known about asymptotics of plethysm}
As mentioned in the introduction, in \cite{FI20} it is shown that deciding whether plethysm coefficients are positive in general is NP-hard.
But Weintraub in \cite[Conj. 2.11]{Wei90} conjectured the following.
\begin{theorem}[{\cite{BCI11},\cite{MM14}}]\label{plethysm:Weintraub}
	Let $\lambda\vdash pk$ with $k$ even and all parts of $\lambda$ even.
Then, $a_{(p),(k)}^\lambda\geq 1$, or equivalently the multiplicity of $\mathbb{S}_\lambda(V)$ in $S^p(S^k(V))$ is positive.
\end{theorem}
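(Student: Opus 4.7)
My approach would proceed through the Schur--Weyl identification of plethysm multiplicities. By \cref{plethysm:prop1} together with the decomposition $(S^kV)^{\otimes p}=\bigoplus_\mu V_\mu\otimes\mathbb S_\mu(S^kV)$, the coefficient $a_{(p),(k)}^\lambda$ equals the multiplicity of the trivial $S_p$-representation inside $\Hom_{\GL(V)}(\mathbb S_\lambda(V),(S^kV)^{\otimes p})$, where $S_p$ acts by permuting tensor factors. By Pieri's rule this Hom space has a natural basis in bijection with $\mathrm{SSYT}(\lambda,(k^p))$, so the task reduces to exhibiting one non-zero $S_p$-invariant vector in it under the evenness hypotheses.

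The plan is induction on $p$ with base cases $p=1$ (trivial, as $\lambda=(k)$ is forced and is even) and $p=2$, which is immediate from \cref{plethysm:ex1}: the decomposition $S^2(S^kV)=\bigoplus_{j=0}^{\lfloor k/2\rfloor}\mathbb S_{(2k-2j,2j)}(V)$ realizes every partition of $2k$ into two even parts. For the step $p\mapsto p+2$, I would use the $\GL(V)$-equivariant comultiplication $\Delta\colon S^{p+2}(S^kV)\hookrightarrow S^p(S^kV)\otimes S^2(S^kV)$ dual to the ring multiplication, split $\lambda=\mu+\nu$ with $\mu\vdash pk$ and $\nu\vdash 2k$ both having only even parts (always achievable by moving an even number of boxes from $\lambda$ into $\nu$ until $|\nu|=2k$, possible because all parts of $\lambda$ and $k$ are even), and then combine an $S_p$-invariant highest weight vector of weight $\mu$ in $S^p(S^kV)$, produced by the inductive hypothesis, with an $S_2$-invariant highest weight vector of weight $\nu$ in $S^2(S^kV)$, produced by the $p=2$ case, via the Littlewood--Richardson product to obtain a highest weight vector of weight $\lambda$ in the tensor product.

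The main obstacle is to verify that the resulting vector in fact lies in the image of $\Delta$ and is $S_{p+2}$-invariant rather than merely $S_p\times S_2$-invariant. Concretely, after symmetrizing over coset representatives of $(S_p\times S_2)\backslash S_{p+2}$ one must show that the transpositions mixing the two blocks contribute with consistent signs; this is precisely where the two evenness hypotheses enter, controlling the plethystic signs in the style of \cref{plethysm:prop2}. Pushing this sign analysis through, as in \cite{BCI11,MM14}, would complete the proof.
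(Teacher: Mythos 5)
The paper does not prove this theorem; it is cited from \cite{BCI11} and \cite{MM14}, which resolve Weintraub's conjecture by substantially more involved methods (geometric invariant theory in the first, tableau combinatorics in the second). Your sketch has at least two genuine gaps.

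First, the splitting $\lambda=\mu+\nu$ (component-wise) with $\mu\vdash pk$, $\nu\vdash 2k$, both even partitions with $l(\mu)\leq p$ and $l(\nu)\leq 2$, is not always available. The Cartan product of a highest weight vector of weight $\mu$ in $S^p(S^kV)$ with one of weight $\nu$ in $S^2(S^kV)$ only produces a highest weight vector of weight $\mu+\nu$ in the component-wise sense, and $l(\mu+\nu)=\max(l(\mu),l(\nu))\leq\max(p,2)$. So for $k=2$, $p+2=4$, $\lambda=(2,2,2,2)$, no admissible choice of $\mu$ and $\nu$ produces $\lambda$, even though $\lambda$ satisfies the hypotheses. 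Reaching partitions whose length exceeds $\max(p,2)$ via a tensor product requires a genuine Littlewood--Richardson combination of weight vectors, not just the Cartan product, and your argument does not address this.

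Second, and more fundamentally, the inductive step asserts but does not prove that the symmetrization over $(S_p\times S_2)\backslash S_{p+2}$ of the constructed $S_p\times S_2$-invariant vector is non-zero in $(S^kV)^{\otimes(p+2)}$. Without this the vector need not lie in (the image of) $S^{p+2}(S^kV)$. You flag this as "the main obstacle" and say the evenness hypotheses will "control the plethystic signs", but the non-vanishing of this symmetrization is exactly the content of Weintraub's conjecture; there is no reason the cross-block transpositions must contribute with cancellation-free signs, and establishing that they do is what makes \cite{BCI11} and \cite{MM14} substantial papers rather than short notes. As written, the proposal identifies the correct difficulty but does not overcome it.
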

Weintraub observed that certain similar sequences of plethysm coefficients stabilize \cite{Wei90},
which Brion generalized in \cite{Bri93}.
Note that if we specialize $\nu=\tilde\nu=\emptyset$ and $\lambda=(1)$,
then this agrees with our \cref{connection:MainThm}.
\begin{theorem}[Sec. 2.6, Cor. 1 in \cite{Bri93}]
	Let $\mu\vdash p$, and $\nu,\tilde\nu,\lambda$ be partitions with $\abs{\nu}=\abs{\tilde\nu}$,
	where $\lambda=\left(l_1^{a_1},\ldots,l_q^{a_q}\right)$ for some $l_1>\ldots>l_q$ and positive integers $a_1,\ldots,a_q$.
	Then,
	\begin{displaymath}
		a_{\mu,\tilde\nu+d\lambda}^{\nu+pd\lambda}
	\end{displaymath}
	is an increasing sequence in $d$ which stabilizes for $d$ so that
	\begin{displaymath}
		\tilde\nu_i-\tilde\nu_{i+1}+d(\lambda_i-\lambda_{i+1})\geq
		p(\tilde\nu_1+\ldots+\tilde\nu_i)-\nu_1-\ldots-\nu_i
	\end{displaymath}
	for all $i\in\{a_1,a_1+a_2,\ldots,a_1+\ldots+a_q\}$.
\end{theorem}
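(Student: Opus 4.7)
The plan is to realise $a_{\mu,\tilde\nu+d\lambda}^{\nu+pd\lambda}$ as a Hilbert function of a finitely generated graded module that admits an injective degree-shift, and then to read off the stabilisation bound from Pieri-type combinatorics at the corners of $\lambda$.

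First, I would reinterpret the multiplicity. Fix $V$ of sufficiently large dimension and a Borel $B=TU\subset\GL(V)$. By \cref{plethysm:prop1} together with Schur--Weyl duality, $(\mathbb{S}_{\tilde\nu+d\lambda}(V))^{\otimes p}=\bigoplus_{\mu\vdash p}V_\mu\otimes\mathbb{S}_\mu(\mathbb{S}_{\tilde\nu+d\lambda}(V))$, and hence
\begin{displaymath}
	a_{\mu,\tilde\nu+d\lambda}^{\nu+pd\lambda}=\dim\Hom_{S_p}(V_\mu,W_d),
\end{displaymath}
where $W_d$ is the weight-$(\nu+pd\lambda)$ subspace of $U$-invariants in $(\mathbb{S}_{\tilde\nu+d\lambda}(V))^{\otimes p}$. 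The question becomes one about isotypic dimensions of a specific sequence of $S_p$-modules.

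Second, for monotonicity I would construct an injective $S_p$-equivariant shift $W_d\hookrightarrow W_{d+1}$. Fix a nonzero highest weight vector $u_\lambda\in\mathbb{S}_\lambda(V)$. The canonical $\GL(V)$-equivariant projection $\mathbb{S}_{\tilde\nu+d\lambda}(V)\otimes\mathbb{S}_\lambda(V)\twoheadrightarrow\mathbb{S}_{\tilde\nu+(d+1)\lambda}(V)$, which appears with multiplicity one since $(\tilde\nu+d\lambda)+\lambda=\tilde\nu+(d+1)\lambda$ is the most dominant term of the Littlewood--Richardson expansion, induces a natural map on $p$-fold tensor powers, and multiplying by the $S_p$-invariant $u_\lambda^{\otimes p}$ produces the desired shift. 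Injectivity is transparent because $u_\lambda^{\otimes p}$ is a non-zero-divisor in the graded algebra $\bigoplus_{d\ge 0}(\mathbb{S}_\lambda(V))^{\otimes p}$, and the shift respects the $U$-invariant, weight-$(\nu+pd\lambda)$ subspaces.

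Third, stabilisation and the explicit threshold. The graded $U$-invariant subalgebra $A=\bigoplus_{d\ge 0}\widetilde W_d$ of $\bigoplus_{d\ge 0}(\mathbb{S}_{\tilde\nu+d\lambda}(V))^{\otimes p}$ is finitely generated by Hilbert's finiteness theorem (it is the coordinate ring of a GIT quotient by $U$). Restricting to the fixed $T$-weight $\nu+pd\lambda$ in the $d$-variable exhibits the sequence $(W_d)_{d\ge 0}$ as a finitely generated module over $\C[u_\lambda^{\otimes p}]$, so the shift map is eventually surjective, which is stabilisation. To pin down the explicit threshold I would model $W_d$ via SSYT of shape $\nu+pd\lambda$ with the appropriate $p$-block content coming from iterated Pieri; the shift then amounts to appending $\lambda$ to each block. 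Surjectivity can fail only when the ``last box'' of an appended $\lambda_i$ cannot be placed in row $i$ without violating the column-strict condition, i.e.\ at the corners of $\lambda$, exactly the indices $i\in\{a_1,a_1+a_2,\ldots,a_1+\ldots+a_q\}$. Tracking the dominance-type constraint that row $i$ must accommodate $p$ copies of $\tilde\nu_1+\ldots+\tilde\nu_i$ while staying within the first $i$ rows of $\nu+pd\lambda$ yields precisely
\begin{displaymath}
	\tilde\nu_i-\tilde\nu_{i+1}+d(\lambda_i-\lambda_{i+1})\geq p(\tilde\nu_1+\ldots+\tilde\nu_i)-\nu_1-\ldots-\nu_i.
\end{displaymath}

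The main obstacle is the last combinatorial step: finite generation delivers \emph{some} stabilisation threshold immediately, but pinning it to the stated bound requires a careful Pieri-style analysis showing that the binding constraints sit exactly at the corners of $\lambda$ and that the partial-sum term $p(\tilde\nu_1+\ldots+\tilde\nu_i)-\nu_1-\ldots-\nu_i$ is the precise content obstruction on the reverse-shift, rather than something weaker coming from the general finite-generation machinery.
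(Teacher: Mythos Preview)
The paper does not prove this statement: it is quoted as a result of Brion \cite[Sec.~2.6, Cor.~1]{Bri93} in the survey subsection on known asymptotics, with no proof given or claimed. There is therefore no ``paper's own proof'' to compare your proposal against.

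As a standalone sketch, your outline is broadly in the spirit of how such stability results are established (and indeed resembles Brion's original approach via coordinate rings of $G/P$-varieties and Cartan multiplication). Two remarks on the argument itself. First, injectivity of the shift is not quite ``transparent'' from $u_\lambda^{\otimes p}$ being a non-zero-divisor in the ordinary tensor algebra: your map factors through the Cartan projection $\mathbb{S}_{\tilde\nu+d\lambda}(V)\otimes\mathbb{S}_\lambda(V)\twoheadrightarrow\mathbb{S}_{\tilde\nu+(d+1)\lambda}(V)$, so what you actually need is that the Cartan algebra $\bigoplus_{\alpha}\mathbb{S}_\alpha(V)$ (the multihomogeneous coordinate ring of the flag variety) is a domain --- true, but a genuine input rather than a triviality. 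Second, as you yourself acknowledge, the explicit threshold is where the real content lies; finite generation only gives \emph{some} bound, and your Pieri-style heuristic for localising the obstruction to surjectivity at the corners of $\lambda$, with exactly the partial-sum right-hand side, would need a careful argument. That is essentially the substance of Brion's paper, carried out there by geometric means rather than tableau combinatorics.
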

In order to deduce asymptotic behaviour of the multiplicity of $\mathbb{S}_{d\lambda}(V)$ in $(S^{dk}(V))^{\otimes p}$, where $\lambda\vdash pk$ with $l(\lambda)\leq p$,
we are going to make use of lattice point counting, in particular Ehrhart theory, following \cite{KM16},
where Ehrhart theory is used to study the asymptotics of the plethysm coefficients we are interested in.
	
	Recall that a \textit{quasi-polynomial of degree} $n$ is a function $q:\N\to\N$ of the form
$q(d)=d^nc_n(d)+\ldots+c_0(d)$, where the $c_0,\ldots,c_n:\N\to\Z$ are periodic functions, i.e., 
$c_i(d+p)=c_i(d)$ for some $p\in\N$ and all $d\in\N$, and $c_n$ is not constant 0.
We call the minimal such $p$ its \textit{period}, and $c_n$ the leading term.

	For a polytope $P\subset\R^N$ we have its \textit{lattice-point enumerator}
	\begin{displaymath}
		\mathcal{L}_P(d):=\#(dP\cap\Z^N).
	\end{displaymath}
	Famously, Ehrhart showed that for a rational polytope $P\subset\R^N$, $\mathcal{L}_P(d)$ is a quasi-polynomial in $d$ of degree $\dim(P)$,
	whose period divides the least common multiple of the denominators of the coordinates of the vertices of $P$.
	In particular, if $P$ is a lattice polytope then $\mathcal{L}_P(d)$ is a polynomial.
	
We now define a polytope encoding Pieri's rule, as in \cite[Def. 3.3, Prop. 3.4]{KM16}.
\begin{definition}
	Let $p,k\in\N$, $\lambda\vdash pk$ with $l(\lambda)\leq p$.
	Furthermore, denote coordinates on $\R^2\times\R^3\times\ldots\times\R^{p}$ by $(x_1^1,x_2^1,x_1^2,x_2^2,\ldots,x_1^{p-1},\ldots,x_{p}^{p-1})$, set $x_1^0=k, x_2^0=\ldots=x_{p}^0=0$,
	and define the rational polytope $P_{k,p}^{\lambda}$ by the constraints
	\begin{enumerate}[(i)]
		\item
			$x_i^j\geq 0$ for all $1\leq i\leq p$ and $1\leq i\leq j+1$,
		\item
			$\sum\limits_{l=1}^jx_{i+1}^l\leq\sum\limits_{l=1}^{j-1}x_{i}^l$ for all $1\leq i\leq j\leq p-1$,
		\item
			$\sum\limits_{i=1}^px^j_i=k$ for all $1\leq i\leq p$, and
		\item
			$\sum\limits_{0\leq j}x_i^j=\lambda_i$ for all $1\leq i\leq p$.
	\end{enumerate}
	Constraints (i) and (iii) imply that $P_{k,p}^\lambda$ is bounded.
	
	Furthermore,
	a bounded set given by linear inequalities with integer coefficients is a rational polytope by \cite[48]{BR15}, so $P_{k,p}^\lambda$ is in fact a rational polytope.
\end{definition}
\begin{proposition}\label{tensorasymp:prop1a}
	The number of lattice points in $P_{k,p}^\lambda$, i.e., points in $P_{k,p}^\lambda\cap\Z^{2+3+\ldots+p}$, is the number of SSYTs of shape $\lambda$ filled with $k$ 1's, $\ldots$, $p$'s.
\end{proposition}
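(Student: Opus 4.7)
The plan is to construct an explicit bijection between lattice points of $P_{k,p}^\lambda$ and SSYTs of shape $\lambda$ with content $(k,\ldots,k)$ using the alphabet $\{1,\ldots,p\}$. Given a lattice point $(x_i^j)$, together with the fixed boundary data $x_1^0=k$ and $x_2^0=\cdots=x_p^0=0$, I assign the tableau $T$ in which row $i$ contains $x_i^{j-1}$ copies of $j$ for each $j=1,\ldots,p$, placed left to right in weakly increasing order. The inverse map sends an SSYT $T$ to $x_i^j := \#\{(j{+}1)\text{'s in row }i\text{ of }T\}$.

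The tableau has shape $\lambda$ because row $i$ has length $\sum_{j=0}^{p-1}x_i^j = \lambda_i$ by (iv), and has the desired content because the value $j+1$ appears $\sum_{i=1}^p x_i^j = k$ times by (iii). Non-negativity (i) together with the boundary conditions guarantees the filling is well-defined, and rows are weakly increasing by construction, so only column-strictness remains to be checked.

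The heart of the argument is this column-strict condition. Setting $\mu^{(j)}_i := \sum_{l=0}^{j-1}x_i^l$, i.e., the length of row $i$ of $T$ restricted to entries $\leq j$, column-strictness of $T$ is equivalent to each skew shape $\mu^{(j)}/\mu^{(j-1)}$ being a horizontal strip, i.e., $\mu^{(j-1)}_i \geq \mu^{(j)}_{i+1}$ for all $i\geq 1$ and $j\geq 1$. Expanding the sums and absorbing the boundary values $x_i^0$, these inequalities are exactly (ii) over the stated range of $i,j$. Running the same calculation in reverse shows that the content-$(k,\ldots,k)$ SSYT assignment produces a tuple satisfying (i)--(iv), so the two maps are mutually inverse. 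The only delicate point is the bookkeeping that reconciles the indexing of (ii) with the horizontal-strip inequalities; no deeper obstacle is expected, since the construction is essentially Pieri's rule unfolded into a recipe for reading SSYTs off of sequences of horizontal-strip extensions $\mu^{(0)}\subset\mu^{(1)}\subset\cdots\subset\mu^{(p)}=\lambda$.
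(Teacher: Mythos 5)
Your proposal is correct and takes essentially the same approach as the paper: interpret the coordinates $x_i^j$ as the number of $(j{+}1)$'s in row $i$ of the tableau (equivalently, boxes added in step $j$ of a Pieri-rule construction), and identify constraint (ii) with the horizontal-strip condition. The paper gives this interpretation only as a brief sketch, whereas you write out the explicit bijection, its inverse, and the translation of (ii) into the condition $\mu^{(j-1)}_i\geq\mu^{(j)}_{i+1}$, so the content is the same but more fully justified.
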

\begin{proof}
	We interpret $x_i^j$ for $1\leq i\leq p-1$, $1\leq j\leq i+1$ as the number of boxes we add in the $j$-th step and $i$-th row to a Young tableau according to Pieri's rule.
	Constraint (i) assures that we do not subtract boxes, constraint (ii) assures that after each step we add at most one box in each column and still obtain a Young diagram,
	constraint (iii) assures that we add $k$ boxes in each step and (iv) assures that the SSYT we get is of shape $\lambda$.
\end{proof}
Kahle and Micha\l ek in \cite{KM16} showed the following, see in particular their Thm. 1.1.
\begin{proposition}\label{plethysm:QuasiPolynomial}
	Let $\lambda\vdash pk$, $\mu\vdash p$.
	Then, $a_{\mu,(dk)}^{d\lambda}$ is a quasi-polynomial in $d$.
\end{proposition}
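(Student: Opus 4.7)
The plan is to establish the quasi-polynomial structure of the total multiplicity $c_{p,dk}^{d\lambda}$ via Ehrhart's theorem applied to the polytope $P_{k,p}^\lambda$, and then upgrade this to the individual plethysm coefficients by exploiting the $S_p$-action on the space of highest weight vectors.

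By \cref{tensorasymp:prop1a}, the lattice points of $dP_{k,p}^\lambda$ parametrise the SSYTs of shape $d\lambda$ with content $(dk)^p$, that is, the iterated Pieri insertions for $(S^{dk}V)^{\otimes p}$; hence $\mathcal{L}_{P_{k,p}^\lambda}(d)=c_{p,dk}^{d\lambda}$. Since $P_{k,p}^\lambda$ is a rational polytope, Ehrhart's theorem gives that $c_{p,dk}^{d\lambda}$ is a quasi-polynomial in $d$. Via Schur-Weyl duality this controls only the weighted sum $\sum_\mu \dim(V_\mu)\,a_{\mu,(dk)}^{d\lambda}$, so to isolate each coefficient I would introduce the space $B_d$ of highest weight vectors of weight $d\lambda$ in $A_d=(S^{dk}V)^{\otimes p}$, carrying its natural $S_p$-action by tensor factor permutation; Schur-Weyl duality identifies $a_{\mu,(dk)}^{d\lambda}=\dim\Hom_{S_p}(V_\mu,B_d)$. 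Character orthogonality on $S_p$ then gives
\begin{displaymath}
	a_{\mu,(dk)}^{d\lambda}\;=\;\frac{1}{p!}\sum_{\sigma\in S_p}\chi_{V_\mu}(\sigma)\,\chi_{B_d}(\sigma),
\end{displaymath}
so since a $\Q$-linear combination of finitely many quasi-polynomials is quasi-polynomial, it suffices to show that $d\mapsto\chi_{B_d}(\sigma)$ is quasi-polynomial for every $\sigma\in S_p$.

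The case $\sigma=e$ is what was already done. For a general $\sigma$ of cycle type $(\tau_1,\ldots,\tau_r)$, the standard cycle identity computes the $T$-equivariant trace on $A_d$ as $\prod_i h_{dk}(t^{\tau_i})$, whose coefficient of $s_{d\lambda}$ in the Schur expansion equals $\chi_{B_d}(\sigma)$. Expanding each factor $h_{dk}(t^{\tau_i})=h_{dk}[p_{\tau_i}]$ in Schur functions and running iterated Pieri produces a signed lattice-point count in a $\sigma$-twisted analogue $P_{k,p,\sigma}^\lambda$ of the polytope preceding \cref{tensorasymp:prop1a}, built by the same recipe but with the Pieri insertion steps rescaled by the cycle lengths $\tau_i$. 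This polytope is rational, its $d$-th dilate is the natural dilate, and Ehrhart's theorem finishes.

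The main obstacle is precisely the sign bookkeeping in the last step. For $\tau_i>1$ the expansion of $h_{dk}[p_{\tau_i}]$ is not Schur-positive, so $\chi_{B_d}(\sigma)$ emerges as a signed sum of SSYT-type counts (essentially a Murnaghan--Nakayama-flavoured packaging), and one must verify that this signed sum assembles into a finite $\Z$-linear combination of Ehrhart quasi-polynomials of rational polytopes whose dilation parameter really is $d$. Once that combinatorial bookkeeping is done, each $\chi_{B_d}(\sigma)$ is quasi-polynomial, and summing via orthogonality yields the claim for $a_{\mu,(dk)}^{d\lambda}$.
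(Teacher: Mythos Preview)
The paper does not prove this proposition; it attributes it to Kahle--Micha\l{}ek \cite[Thm.~1.1]{KM16} without argument. Your outline is in fact close to what is done there: one reduces, via character orthogonality on $S_p$, to showing that each trace $\chi_{B_d}(\sigma)$ is quasi-polynomial in $d$, and identifies this trace with the coefficient of $s_{d\lambda}$ in $\prod_i h_{dk}(x^{\tau_i})$.

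Where your sketch is incomplete is exactly where you flag it. Extracting $[s_{d\lambda}]$ from $\prod_i h_{dk}(x^{\tau_i})$ via the Weyl denominator gives
\begin{displaymath}
\chi_{B_d}(\sigma)=\sum_{w\in S_n}\sign(w)\,\bigl[x^{\,d\lambda+\rho-w\rho}\bigr]\;\prod_i h_{dk}(x^{\tau_i}),
\end{displaymath}
and each summand counts non-negative integer tuples $(\beta^{(i)})_i$ subject to $|\beta^{(i)}|=dk$ and $\sum_i\tau_i\beta^{(i)}=d\lambda+\rho-w\rho$. Because of the constant shift $\rho-w\rho$ this is \emph{not} the lattice-point count in the $d$-th dilate of a fixed rational polytope, so ordinary Ehrhart does not apply in the form you invoke (``its $d$-th dilate is the natural dilate''). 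What is needed is the quasi-polynomiality of vector partition functions along affine rays, equivalently Ehrhart theory for parametric polytopes whose right-hand side is affine in $d$; this is precisely the device used in \cite{KM16}. Once that replacement is made, your argument goes through; as written, the last step is an outline rather than a proof.
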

\begin{example}[{\cite[Ex. 1.3]{KM16}}]
	Let $\lambda=(31,3,2,2,2)$ and define
	\begin{align*}
		p_1(d):=\frac{1}{720}d^3+\frac{1}{20}d^2-\frac{289}{720}d,\quad
		p_2(d):=\frac{1}{8}d+\frac{5}{8},\quad
		p_3(d):=-\frac{1}{6}d+\frac{1}{3},\quad
		p_4(d):=-\frac{1}{3}d+\frac{7}{12},\\
		A(d):=p_1+p_2\left\lfloor\frac{d}{2}\right\rfloor+p_3\left\lfloor\frac{d}{3}\right\rfloor+\left(p_4+\frac{1}{2}\left\lfloor\frac{d}{3}\right\rfloor\right)\left\lfloor\frac{1+d}{3}\right\rfloor+\frac{1}{4}\left(\left\lfloor\frac{1+d}{3}\right\rfloor^2+\left\lfloor\frac{d}{4}\right\rfloor-\left\lfloor\frac{3+d}{4}\right\rfloor\right).
	\end{align*}
	Then
	\begin{displaymath}
		a_{(5),(8d)}^{d\lambda}=
		A(d)+
		\begin{cases}
			1&d\equiv0\pmod{5}\\
			\frac{3}{5}&d\equiv1\pmod{5}\\
			\frac{4}{5}&d\equiv2,3,4\pmod{5}
		\end{cases}.
	\end{displaymath}
	Note that in this case $a_{(5),(8d)}^{d\lambda}$ is a quasi-polynomial whose leading term is constant.
	We shall see in \cref{connection:MainThm} that this is always the case.
\end{example}
\section{The argument}
In this section, we provide a proof of \cref{connection:MainThm}.
\subsection{Asymptotic behaviour of multiplicities in tensor products}
In this section more closely study the asymptotic behaviour of the multiplicity of $\mathbb{S}_{d\lambda}(V)$ in $(S^{dk}(V))^{\otimes p}$ using Pieri's rule, where $\mu\vdash p$ and $\lambda\vdash pk$ with $l(\lambda)\leq p$,
which we then relate to the plethysm coefficients we are interested in.

In order to do so, we first have to understand when the multiplicity is non-negative.
\begin{lemma}\label{tensorasymp:lemma1}
	Let $p,k\in\N$, and $\lambda\vdash pk$ with $l(\lambda)\leq p$.
	Then, there exists a SSYT of shape $\lambda$ filled with $k$ 1's, $k$ 2's, $\ldots$, $k$ $p$'s.
\end{lemma}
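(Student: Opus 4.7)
The plan is to reduce the claim to the classical positivity criterion for Kostka numbers. The existence of an SSYT of shape $\lambda$ with content $(k^p)$ is precisely the statement $K_{\lambda,(k^p)}>0$, where $K_{\lambda,\mu}$ denotes the Kostka number counting SSYTs of shape $\lambda$ and content $\mu$. A standard theorem in algebraic combinatorics (see e.g.\ \cite[Prop.\ 7.10.5]{Sta99}) asserts that for two partitions $\lambda,\mu$ of the same size, $K_{\lambda,\mu}>0$ if and only if $\lambda$ dominates $\mu$, i.e.\ $\sum_{j=1}^{i}\lambda_j\geq\sum_{j=1}^{i}\mu_j$ for every $i\geq 1$. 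I would quote this and apply it with $\mu=(k^p)$.

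It then remains to verify the dominance inequality $\sum_{j=1}^{i}\lambda_j\geq ik$ for each $1\leq i\leq p$. This is immediate from the hypotheses: since $\lambda_1\geq\ldots\geq\lambda_p\geq 0$ and $\sum_{j=1}^p\lambda_j=pk$, the first $i$ parts are the $i$ largest among $p$ nonnegative terms of average exactly $k$, hence their mean is at least $k$ and their sum at least $ik$. For $i>p$ the inequality is also clear, because both sides equal $pk$ once $i\geq p$.

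There is no real obstacle here; the only content is the (trivial) dominance estimate, and the heavy lifting is done by the cited positivity result. As an alternative giving a self-contained proof, one can argue by an explicit greedy construction: for $i=1,2,\ldots,p$ in turn, place the $k$ copies of $i$ into the leftmost columns whose current height is strictly less than $\lambda^T_{\bullet}$ and whose topmost free box lies below an entry smaller than $i$. An induction on $i$, with the dominance bound above as the key invariant, shows that at every step enough columns are available to accommodate the $k$ new entries while respecting the row-weak-increase and column-strict-increase conditions, so the procedure terminates on a valid SSYT of shape $\lambda$ and content $(k^p)$.
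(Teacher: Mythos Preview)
Your argument via the Kostka positivity criterion is correct: the dominance inequality $\sum_{j\leq i}\lambda_j\geq ik$ follows exactly as you say, since the $i$ largest of $p$ nonnegative numbers averaging $k$ have mean at least $k$. This is a legitimate and efficient proof.

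The paper takes a different route. Rather than invoking \cite[Prop.\ 7.10.5]{Sta99}, it gives a direct inductive construction: given $\lambda\vdash(p+1)k$, it locates the index $i_0$ with $\lambda_{i_0+1}<k\leq\lambda_{i_0}$, strips a specific horizontal strip of size $k$ (removing $\lambda_j-\lambda_{j+1}$ boxes from each row $j>i_0$ and $k-\lambda_{i_0+1}$ boxes from row $i_0$), applies the induction hypothesis to the resulting $\lambda'\vdash pk$, and then fills the stripped boxes with $p+1$. Your approach is shorter and cleaner for the bare existence statement, and has the advantage of making the connection to dominance order explicit. The paper's approach, on the other hand, is self-contained and produces a concrete tableau; more importantly, this exact construction (and minor variants of it) is reused verbatim in the proofs of \cref{tensorasymp:prop1} and \cref{connection:prop3}, where one needs not merely some SSYT but a specific one whose structure can be analyzed. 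So in the context of the paper the constructive argument is doing double duty, which your citation-based proof would not.

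Your alternative greedy sketch is essentially the standard proof of the Kostka criterion and would work, but as written it is too vague to stand on its own; if you want a self-contained argument, the paper's horizontal-strip induction is cleaner.
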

\begin{proof}
	We use induction on $p$.
	For $p=1$, $\lambda=(k)$ is the only partition of $pk$ of length at most $p$, and 
	filling the Young diagram of shape $(k)$ with $k$ 1's is a tableau with the required property.
	
	Now assume the claim is true for all $\mu\vdash pk$ with $l(\mu)\leq p$,
	and let $\lambda=(\lambda_1,\ldots,\lambda_{p+1})\vdash(p+1)k$ with $l(\lambda)\leq p+1$.
	
	Since $\lambda_1\geq\ldots\geq\lambda_{p+1}$,
	we have $\lambda_{p+1}\leq k$ and $\lambda_1\geq k$.
	Therefore, we may choose $i_0$ such that $\lambda_{i_0+1}<k\leq\lambda_{i_0}$.
	We then for each $i_0<j\leq l(\lambda)$ cross out the rightmost $\lambda_{j}-\lambda_{j+1}$ boxes in the $j$-th row,
	and the rightmost $k-\lambda_{i_0+1}$ boxes in the $i_0$-th row.
	By the choice of $i_0$
%	Then, we can cross out $k$ boxes in the Young diagram of shape $\lambda$, at most 1 in each column and left justified such that 
	we obtain a Young diagram of some shape $\lambda'$, where $\lambda'\vdash pk$.
	Since $l(\lambda)\leq p+1$ and $\lambda_{p+1}<k$, we  have $l(\lambda')\leq p$.
	By the induction hypothesis we find a SSYT of shape $\lambda'$ filled with $k$ 1's, $\ldots$, $p$'s.
	If we now add back all boxes we crossed out before and fill them with $p+1$, we obtain a SSYT with the required property, as we have crossed out at most one box in each column and only the rightmost boxes in each row where we crossed something out.
	\begin{figure}[H]
	\centering
	\begin{ytableau}
	 1&1&1&1&2&2\\
	 2&2&3&3&3\\
	 3&\varprod&\varprod&\varprod\\
	 \varprod
	\end{ytableau}
	\caption*{Example with $p=3$ and $k=4$.}
	\end{figure}
\end{proof}
In order to find out when multiple such tableaux exist, we will repeatedly make use of the following reduction.
\begin{lemma}\label{tensorasymp:lemma2}
	Let $p,k\in\N$ and $\lambda=(\lambda_1,\ldots)\vdash pk$ with $\lambda_1=p$, $\mu\vdash p$. Let $\lambda':=(\lambda_2,\ldots)$ and assume $n:=\dim(V)\geq l(\lambda)$. Then,
	$a_{\mu,(1^k)}^\lambda=a_{\mu,(1^{k-1})}^{\lambda'}$.
\end{lemma}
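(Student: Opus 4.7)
The plan is to use Schur--Weyl duality to convert the statement into one about $S_p$-representations on spaces of highest weight vectors, and then to construct an explicit $S_p$-equivariant isomorphism that handles both sides simultaneously. By Schur--Weyl, $(\bigwedge^k V)^{\otimes p}\cong\bigoplus_{\mu\vdash p}V_\mu\otimes\mathbb{S}_\mu(\bigwedge^k V)$, so if $H_\lambda(W)$ denotes the space of highest weight vectors of weight $\lambda$ in a $\GL(V)$-representation $W$, then $a_{\mu,(1^k)}^{\lambda}$ is the multiplicity of $V_\mu$ in the $S_p$-representation $H_\lambda\bigl((\bigwedge^k V)^{\otimes p}\bigr)$. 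Hence it suffices to produce an $S_p$-equivariant linear isomorphism
\begin{displaymath}
\phi:H_{\lambda'}\bigl((\bigwedge^{k-1}V')^{\otimes p}\bigr)\xrightarrow{\ \cong\ }H_{\lambda}\bigl((\bigwedge^{k}V)^{\otimes p}\bigr),
\end{displaymath}
where $V':=\vspan(e_2,\ldots,e_n)$ (enlarging $n$ beforehand if necessary, since the plethysm coefficient does not depend on $n$ once $n\geq l(\lambda)+1$).

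The map I would use is simply wedging each tensor factor with $e_1$:
\begin{displaymath}
\phi(w_1\otimes\cdots\otimes w_p):=(e_1\wedge w_1)\otimes\cdots\otimes(e_1\wedge w_p).
\end{displaymath}
It is manifestly $S_p$-equivariant, and it sends vectors of $T$-weight $\lambda'$ (with respect to the torus of $\GL(V')$) to vectors of $T$-weight $\lambda=(p,\lambda_2,\lambda_3,\ldots)$ because each factor contributes a single $e_1$. For injectivity and the image calculation, I would note that the basis vectors of $(\bigwedge^{k}V)^{\otimes p}$ are tensors $e_{I_1}\otimes\cdots\otimes e_{I_p}$ with $I_s\subset\{1,\ldots,n\}$, $|I_s|=k$, whose first weight coordinate equals $\#\{s:1\in I_s\}$; so a weight-$\lambda$ vector with $\lambda_1=p$ must have $1\in I_s$ for every $s$, hence lies in the span of $\phi$-images, and $\phi$ itself is clearly injective.

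The main thing to verify is compatibility with the highest weight condition, i.e.\ invariance under all raising operators $E_{ij}$, $i<j$. This is precisely where the hypothesis $\lambda_1=p$ does the work. For $i=1$, each raising operator $E_{1j}$ annihilates $\phi(w)$ automatically: since every tensor factor of $\phi(w)$ already contains $e_1$, replacing an $e_j$ by $e_1$ produces a repeated factor, which vanishes in the exterior algebra. For $2\leq i<j$, we have $E_{ij}(e_1)=0$, so $E_{ij}$ acts on the tensor factor $e_1\wedge w_s$ as $e_1\wedge E_{ij}(w_s)$, and therefore $\phi$ intertwines the action of $E_{ij}$ on $(\bigwedge^{k-1}V')^{\otimes p}$ with its action on $(\bigwedge^{k}V)^{\otimes p}$. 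Combining the two, $\phi(w)$ is a highest weight vector for $\GL(V)$ if and only if $w$ is a highest weight vector for $\GL(V')$.

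I expect the main technical point to be the argument in the previous paragraph; once it is in place, taking $V_\mu$-isotypic components on both sides of the $S_p$-equivariant isomorphism immediately yields $a_{\mu,(1^k)}^{\lambda}=a_{\mu,(1^{k-1})}^{\lambda'}$. The calculation $|\lambda'|=pk-p=p(k-1)=|\mu|\cdot|(1^{k-1})|$ shows that the right-hand side is a legitimate plethysm coefficient, completing the proof.
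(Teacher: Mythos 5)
Your proof is correct, and it supplies a complete argument where the paper itself simply cites \cite[Lemma~3.2]{KM16}. The chain of reasoning — Schur--Weyl duality identifies $a_{\mu,(1^k)}^\lambda$ with the multiplicity of $V_\mu$ in the $S_p$-module of highest weight vectors of weight $\lambda$ in $(\bigwedge^k V)^{\otimes p}$, and the explicit $S_p$-equivariant map $\phi$ given by wedging each tensor factor with $e_1$ realizes the desired isomorphism to the corresponding space over $V'$ — is sound. The crucial observation, that $\lambda_1=p$ forces $1\in I_s$ in every factor of any weight-$\lambda$ basis vector $e_{I_1}\otimes\cdots\otimes e_{I_p}$ (so $\phi$ is surjective onto the weight space) and simultaneously annihilates $E_{1j}\phi(w)$ for all $j>1$ (a repeated $e_1$ vanishes in the exterior power), is exactly where the hypothesis is used; together with the intertwining of $E_{ij}$ for $2\leq i<j$ and injectivity of $\phi$, this yields the $S_p$-isomorphism on highest weight vector spaces, and the equality follows by taking $V_\mu$-isotypic components. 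A small overcaution: you do not actually need to enlarge $n$, since $n\geq l(\lambda)$ already gives $\dim V'=n-1\geq l(\lambda)-1=l(\lambda')$. Your approach fits naturally into the framework the paper develops in Prop.~\ref{connection:prop1}, which likewise uses Schur--Weyl to identify plethysm multiplicities with multiplicities of Specht modules in highest weight vector spaces, so it could serve as a clean self-contained replacement for the citation.
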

\begin{proof}
	See \cite[Lemma 3.2]{KM16}.
\end{proof}
Using this, we can proof the following corollaries.
\begin{corollary}\label{plethysm:cor1}
	Let $p,k\in\N$, $\mu\vdash p$ and $\lambda\vdash pk$ with $l(\lambda)=p$, and set $\lambda':=(\lambda_1-\lambda_p,\ldots,\lambda_{p-1}-\lambda_p)$.
	Then,
	\begin{displaymath}
		a_{\mu,(k)}^\lambda=a_{\mu,(k-\lambda_p)}^{\lambda'}
	\end{displaymath}
	if $\lambda_p$ is even,
	and
	\begin{displaymath}
		a_{\mu,(k)}^\lambda=a_{\mu^T,(k-\lambda_p)}^{\lambda'}
	\end{displaymath}
	if $\lambda_p$ is odd.
\end{corollary}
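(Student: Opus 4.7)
The plan is to combine Corollary \ref{plethysm:cor2}, which swaps symmetric and exterior plethysms at the cost of transposing partitions, with iterated application of Lemma \ref{tensorasymp:lemma2}, which strips one full row of length $p$ from an exterior plethysm. The lemma applies to exterior plethysms and requires the first part of the outer partition to equal $p$, so the idea is to move to the transposed picture (where this hypothesis is immediate from $l(\lambda)=p$), peel off $\lambda_p$ rows, and move back.

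First I would use Corollary \ref{plethysm:cor2}: since $(-)^T$ preserves multiplicities of the transposed partition, one gets
\begin{displaymath}
a_{\mu,(k)}^\lambda=\begin{cases}a_{\mu,(1^k)}^{\lambda^T},&k\text{ even},\\a_{\mu^T,(1^k)}^{\lambda^T},&k\text{ odd}.\end{cases}
\end{displaymath}
The hypothesis $l(\lambda)=p$ gives $(\lambda^T)_j=p$ for all $j\le\lambda_p$, so Lemma \ref{tensorasymp:lemma2} applies to $\lambda^T$, to the result of stripping its top row, and so on, exactly $\lambda_p$ times. A short direct check from the definition of transpose shows that the partition obtained after removing the first $\lambda_p$ rows of $\lambda^T$ is precisely $(\lambda')^T$, hence, with $\nu\in\{\mu,\mu^T\}$ as above,
\begin{displaymath}
a_{\nu,(1^k)}^{\lambda^T}=a_{\nu,(1^{k-\lambda_p})}^{(\lambda')^T}.
\end{displaymath}

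Finally, another application of Corollary \ref{plethysm:cor2} translates back from exterior to symmetric plethysm in degree $k-\lambda_p$:
\begin{displaymath}
a_{\nu,(1^{k-\lambda_p})}^{(\lambda')^T}=\begin{cases}a_{\nu,(k-\lambda_p)}^{\lambda'},&k-\lambda_p\text{ even},\\a_{\nu^T,(k-\lambda_p)}^{\lambda'},&k-\lambda_p\text{ odd}.\end{cases}
\end{displaymath}
Chaining the three identities and using $(\mu^T)^T=\mu$, a four-case analysis on the parities of $k$ and $\lambda_p$ (the parity of $k-\lambda_p$ being determined by them) collapses the outer subscript to $\mu$ when $\lambda_p$ is even and to $\mu^T$ when $\lambda_p$ is odd, which is exactly the claimed formula.

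The only non-routine point is the identification $((\lambda^T)_{\lambda_p+1},(\lambda^T)_{\lambda_p+2},\dots)=(\lambda')^T$, which follows at once from $(\lambda^T)_i=\#\{j:\lambda_j\ge i\}$; everything else is bookkeeping of transposes and parities. I do not expect a serious obstacle here, the main conceptual point being that the reduction becomes transparent only after passing to the exterior-plethysm side, where Lemma \ref{tensorasymp:lemma2} is formulated.
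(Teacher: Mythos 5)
Your proof is correct and uses exactly the same two ingredients as the paper's (the transpose/$\omega$ identity packaged as \cref{plethysm:cor2}, and the row-stripping Lemma~\ref{tensorasymp:lemma2}); the only difference is organizational, in that you pass to the exterior side once, peel off all $\lambda_p$ top rows of $\lambda^T$ there, and transpose back at the end, while the paper toggles back to the symmetric side after each single-row step and so records the intermediate identity $a_{\mu,(k)}^\lambda=a_{\mu^T,(k-1)}^{\tilde\lambda}$. Both bookkeepings work, and yours defers the parity analysis to one four-case check at the end, which is a perfectly fine presentation.
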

\begin{proof}
	Set $\tilde\lambda:=(\lambda_1-1,\ldots,\lambda_p-1)$.
	For odd $k$ we have by \cref{symf:cor3} and \cref{plethysm:prop2}
	\begin{displaymath}
		a_{\mu,(k)}^\lambda=
		a_{\mu^T,(1^k)}^{\lambda^T}\overset{\cref{tensorasymp:lemma2}}{=}
		a_{\mu^T,(1^{k-1})}^{(\tilde\lambda)^T}=
		a_{\mu^T,(k-1)}^{\tilde\lambda},
	\end{displaymath}
	and for even $p$
	\begin{displaymath}
		a_{\mu,(k)}^\lambda=
		a_{\mu,(1^k)}^{\lambda^T}=
		a_{\mu,(1^{k-1})}^{(\tilde\lambda)^T}=
		a_{\mu^T,(k-1)}^{\tilde\lambda}.
	\end{displaymath}
	Hence, we always have $a_{\mu,(k)}^\lambda=a_{\mu^T,(k-1)}^{\tilde\lambda}$.
	Doing this $\lambda_p$ times, the claim follows.
\end{proof}
\begin{definition}\label{def:c}
	Let $p,k\in\N$, $\lambda\vdash pk$.
	Then, we define $c_{p,k}^\lambda$ as the coefficient of $s_\lambda$ in $h_k^p$,
	or equivalently as the multiplicity of $\mathbb{S}_\lambda(V)$ in $(S^k(V))^{\otimes p}$, where $\dim(V)\geq l(\lambda)$, or the number of SSYTs of shape $\lambda$ filled with $k$ 1's, $\ldots$, $p$'s by Pieri's rule.
\end{definition}
\begin{corollary}\label{tensorasymp:cor1}
	Let $p,k\in\N$, and $\lambda=(\lambda_1,\ldots,\lambda_p)\vdash pk$ with $l(\lambda)=p$.
	Furthermore, assume $\dim(V)\geq p$.
	Then, $c_{p,k}^\lambda=c_{p,k-\lambda_p}^{\lambda'}$, where $\lambda':=(\lambda_1-\lambda_p,\ldots,\lambda_{p-1}-\lambda_p,0)$.
\end{corollary}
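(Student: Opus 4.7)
The plan is to argue combinatorially via the SSYT interpretation recorded in Definition \ref{def:c}: $c_{p,k}^{\lambda}$ counts SSYTs of shape $\lambda$ whose content consists of exactly $k$ copies each of $1,2,\ldots,p$. I will set up an explicit bijection between such tableaux of shape $\lambda$ and SSYTs of shape $\lambda'$ with content $(k-\lambda_p,\ldots,k-\lambda_p)$.

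The key observation is a \emph{column rigidity} argument. Since $l(\lambda)=p$, the first $\lambda_p$ columns all have length exactly $p$. In a SSYT the entries strictly increase down columns and lie in $\{1,\ldots,p\}$, so any column of length $p$ must be the unique strictly increasing column $(1,2,\ldots,p)^{T}$. Consequently the leftmost $\lambda_p$ columns of any SSYT counted by $c_{p,k}^{\lambda}$ are forced, and deleting them yields a filling of the skew shape $(\lambda_1-\lambda_p,\ldots,\lambda_{p-1}-\lambda_p,0)=\lambda'$. The deleted columns use up exactly $\lambda_p$ copies of each of $1,\ldots,p$, so the remaining filling has content $(k-\lambda_p,\ldots,k-\lambda_p)$; it is semistandard because deleting leftmost columns preserves weak row and strict column monotonicity.

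Conversely, prepending the $\lambda_p$ columns $(1,2,\ldots,p)^{T}$ to any SSYT of shape $\lambda'$ with this content gives a SSYT of shape $\lambda$ with content $(k,\ldots,k)$: rows remain weakly increasing because the prepended entries $1,\ldots,p$ are the smallest possible in each row, and columns remain strictly increasing by construction. These two maps are clearly mutually inverse, giving a bijection and hence $c_{p,k}^{\lambda}=c_{p,k-\lambda_p}^{\lambda'}$.

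A routine sanity check on total box counts confirms everything is well-defined: the shape $\lambda'$ has $\sum_{i=1}^{p-1}(\lambda_i-\lambda_p)=pk-p\lambda_p=p(k-\lambda_p)$ boxes, matching the prescribed content. The degenerate case $\lambda_p=k$ (i.e.\ $\lambda=(k^p)$) reduces to $c_{p,0}^{(0,\ldots,0)}=1$, which agrees with the unique tableau filling row $i$ with $i$'s. There is no real obstacle here — the only point requiring attention is the column-rigidity claim, and the hypothesis $l(\lambda)=p$ (equivalently $\lambda_p\geq 1$) together with $\dim(V)\geq p$ makes it immediate.
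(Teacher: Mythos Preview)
Your bijective argument is correct: the column-rigidity observation pins down the leftmost $\lambda_p$ columns, and the deletion/prepending maps are mutually inverse bijections between the two sets of SSYTs. This is the most direct and elementary proof of the statement.

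The paper, however, takes a different route. Rather than arguing directly with tableaux, it first proves the finer identity of \cref{plethysm:cor1} for the individual plethysm coefficients, namely $a_{\mu,(k)}^\lambda = a_{\mu,(k-\lambda_p)}^{\lambda'}$ (or $a_{\mu^T,(k-\lambda_p)}^{\lambda'}$ when $\lambda_p$ is odd), via \cref{tensorasymp:lemma2} and the involution~$\omega$. It then recovers $c_{p,k}^\lambda = c_{p,k-\lambda_p}^{\lambda'}$ by summing $\dim(V_\mu)\,a_{\mu,(k)}^\lambda$ over $\mu\vdash p$ using Schur--Weyl duality, invoking $\dim V_\mu=\dim V_{\mu^T}$ from the hook length formula to handle the odd case. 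Your approach is cleaner for this corollary in isolation; the paper's detour through \cref{plethysm:cor1} is justified because that refined identity on the $a_{\mu}$'s is exactly what is needed later in the proof of \cref{connection:MainThm} to reduce the case $l(\lambda)=p$ to $l(\lambda)\le p-1$.
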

\begin{proof}
	By Schur-Weyl duality we have
	\begin{displaymath}
		(S^k(V))^{\otimes p}=
		\bigoplus\limits_{\mu\vdash p}V_\mu\otimes\mathbb{S}_\mu(S^k (V)),\quad
		(S^{k-\lambda_p}(V))^{\otimes p}=
		\bigoplus\limits_{\mu\vdash p}V_\mu\otimes\mathbb{S}_\mu(S^{k-\lambda_p} (V)).
	\end{displaymath}
	Therefore, the multiplicity of $\lambda$ in $(S^k (V))^{\otimes p}$ is
	\begin{displaymath}
		\sum\limits_{\mu\vdash p}\dim(V_\mu)\cdot a_{\mu,(k)}^\lambda,
	\end{displaymath}
	and the multiplicity of $\lambda'$ in $(S^{k-\lambda_p} (V))^{\otimes p}$ is
	\begin{displaymath}
		\sum\limits_{\mu\vdash p}\dim(V_\mu)\cdot a_{\mu,(k-\lambda_p)}^{\lambda'}.
	\end{displaymath}
	By the preceding corollary \cref{plethysm:cor1} we have $a_{\mu,(k)}^\lambda=a_{\mu,(k-\lambda_p)}^{\lambda'}$,
	if $\lambda_p$ is even,
	as well as $a_{\mu,(k)}^\lambda=a_{\mu^T,(k-\lambda_p)}^{\lambda'}$, if $\lambda_p$ is odd.
	Hence, for even $\lambda_p$ we see that the multiplicity of $\lambda$ in $(S^k (V))^{\otimes p}$
	equals the multiplicity of $\lambda'$ in $(S^{k-\lambda_p} V)^{\otimes p}$.
	But, by the Hook length formula \cite[4.12]{FH04} the dimensions of $V_\mu$ and $V_{\mu^T}$ are the same for any $\mu\vdash p$.
	Therefore, the claim is also true for odd $\lambda_p$.
\end{proof}
We are now ready to state when multiple tableaux as in \cref{tensorasymp:lemma1} exist.
\begin{proposition}\label{tensorasymp:prop1}
	Let $p,k\in\N$.
	Then, for partitions of $pk$ of the form 
	\begin{displaymath}
	(pk),(k^{p}),(a^{p-1}),(b,c^{p-1}),(b^{p-1},c)
	\end{displaymath}
	with integers $a, b>c$,
	there is exactly one SSYT of that shape filled with $k$ 1's, $\ldots$, $p$'s.
	Furthermore, for all other partitions $\lambda\vdash pk$ with $l(\lambda)\leq p$, there are at least two such SSYTs.
\end{proposition}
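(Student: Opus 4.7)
For the exceptional shapes my plan is to check uniqueness by direct structural analysis. In $(k^p)$ and in the first $c$ columns of $(b,c^{p-1})$ and $(b^{p-1},c)$, every column has full height $p$ and must therefore equal $(1,2,\ldots,p)^T$. The remaining freedom lies either in a single long row---the row of $(pk)$, where weak-increase plus content forces $1^k2^k\cdots p^k$, or the length-$(b-c)$ extension of row $1$ in $(b,c^{p-1})$, where it forces $1^{k-c}2^{k-c}\cdots p^{k-c}$---or in a height-$(p-1)$ block, namely $(a^{p-1})$ itself or the upper $(p-1)\times(b-c)$ block of $(b^{p-1},c)$, where each column is characterized by the element of $\{1,\ldots,p\}$ it omits, and the row-increase condition forces the sequence of missing values to be weakly decreasing; together with the content constraint this fixes it uniquely.

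For non-exceptional $\lambda$ my plan is to show $c_{p,k}^\lambda\geq 2$. If $l(\lambda)=p$, \cref{tensorasymp:cor1} gives $c_{p,k}^\lambda=c_{p,k-\lambda_p}^{\lambda'}$ with $\lambda'=(\lambda_1-\lambda_p,\ldots,\lambda_{p-1}-\lambda_p,0)$; a short check shows $\lambda'=(pk')$ forces $\lambda=(b,c^{p-1})$ and $\lambda'=(a^{p-1})$ forces $\lambda=(b^{p-1},c)$, so non-exceptionality is preserved and I may reduce to $l(\lambda)\leq p-1$.

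By Pieri's rule, $c_{p,k}^\lambda=\sum_\mu c_{p-1,k}^\mu$ summed over $\mu\subset\lambda$ with $\lambda/\mu$ a horizontal strip of size $k$; each such $\mu$ has $l(\mu)\leq l(\lambda)\leq p-1$, so $c_{p-1,k}^\mu\geq 1$ by \cref{tensorasymp:lemma1}. Such strips correspond to tuples $(s_i)$ with $s_i\in[0,\lambda_i-\lambda_{i+1}]$ and $\sum s_i=k$, whose total cap $\sum(\lambda_i-\lambda_{i+1})=\lambda_1$ strictly exceeds $k$, since $\lambda_1\geq pk/l(\lambda)>k$ when $l(\lambda)<p$. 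If $\lambda$ has at least two indices $i$ with $\lambda_i>\lambda_{i+1}$, i.e.\ $\lambda$ is not a rectangle, then a swap argument---decrement some $s_i\geq 1$ and increment some $s_j<\lambda_j-\lambda_{j+1}$ at a different step position---produces a second strip, giving $c_{p,k}^\lambda\geq 2$.

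The remaining case is $\lambda=(u^m)$ a rectangle with $m\in\{2,\ldots,p-2\}$, where only one step position exists and the horizontal strip of size $k$ is uniquely $\mu=(u^{m-1},u-k)$. Since $u=pk/m>k$, the partition $\mu$ is not a rectangle and $l(\mu)=m\leq p-2$, so the previous paragraph applied to $\mu$ with parameters $(p-1,k)$ gives $c_{p-1,k}^\mu\geq 2$, whence $c_{p,k}^\lambda=c_{p-1,k}^\mu\geq 2$. The main obstacle is precisely this medium-height rectangle: the swap argument degenerates when $\lambda$ has only one step position, and must be rescued by this one-step reduction through $\mu$.
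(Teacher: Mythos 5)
Your proof is correct, and the exceptional case is handled by a genuinely different argument than the paper's. Where the paper proves uniqueness for $(a^{p-1})$ by an induction on $p$ (observing the $k$ rightmost boxes of the bottom row must all be $p$, deleting them, and chaining this with the length-$p$ reduction of \cref{tensorasymp:cor1}), you argue directly: full-height columns are forced to be $(1,\ldots,p)^T$, a residual single row is forced by weak increase plus content, and in a $(p-1)$-high block each column is characterized by the value of $\{1,\ldots,p\}$ it omits, the row weak-increase condition forces the omitted-value sequence to be weakly decreasing left to right, and content then pins it down. This is more self-contained and avoids the induction. For the non-exceptional case the two proofs are closer in spirit---both reduce $l(\lambda)=p$ via \cref{tensorasymp:cor1}, both rest on \cref{tensorasymp:lemma1} for the subshapes, and both handle medium-height rectangles by first removing the unique forced horizontal $k$-strip---but you count Pieri step-tuples $(s_i)$ and exhibit two of them abstractly by a swap, whereas the paper writes down two explicit cross-out patterns (greedy from the bottom row versus greedy from the top row). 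The two formulations are essentially equivalent; yours is slightly cleaner since it makes the inequality $\sum_i(\lambda_i-\lambda_{i+1})=\lambda_1>k$ do the work explicitly. One small point to tighten: in the swap step you should justify that $i$ (with $s_i\geq 1$) and $j$ (with $s_j<\lambda_j-\lambda_{j+1}$) can be chosen distinct; this follows because if $s$ has a single nonzero coordinate $s_{i_0}=k$, the non-rectangle hypothesis supplies a second descent position $j\neq i_0$ with $s_j=0<\lambda_j-\lambda_{j+1}$, and otherwise there are at least two nonzero coordinates to choose $i$ from.
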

\begin{proof}
	For $\lambda=(k^p)$ the only SSYT filled with $k$ 1's, 2's,$\ldots$, $p$'s is the one filled with 1's in the first row, 2's in the second row, and so on.
	
	So let $\lambda\vdash pk$ with $l(\lambda)=p$ and $\lambda_p<k$.
	By Pieri's rule the number of SSYTs of shape $\lambda$ filled with $k$ 1's, $\ldots$, $p$'s is exactly the multiplicity of $\lambda$ in $(S^k (V))^{\otimes p}$, where we assume $\dim(V)\geq l(\lambda)=p$.
	Using the preceding \cref{tensorasymp:cor1}, this equals the multiplicity of $\lambda':=(\lambda_1-\lambda_p,\ldots,\lambda_{p-1}-\lambda_p,0)$ in $(S^{k-\lambda_p} (V))^{\otimes p}$,
	which again by Pieri's rule equals the number of SSYTs of shape $\lambda'$ filled with $k-\lambda_p$ 1's, 2's,$\ldots$,$p$'s.

	Furthermore, the partitions of $pk$ of the form $(a^{p-1},b), (a,b^{p-1})$ with $a>b$ are exactly those which after subtracting $b$ from each part are those of length at most $p-1$ for whom we claim that there is only one SSYT with the required property.
	Hence, it is enough to consider partitions of length at most $p-1$.
	
	First, we consider the partitions where we claim exactly one SSYT exists.
	For $\lambda=({kp})$ we only have one SSYT of shape $\lambda$ filled with $k$ 1's, $\ldots$, $p$'s,
	as entries along this single row have to be non decreasing.
	Therefore,
	only the case where $p\geq 2$ and $(a^{p-1})$ is a partition of $pk$ for some integer $a$ remains to be investigated.
	To this end, we use induction on $p\geq 2$.
	
	For $p=2$, we get the partition $(2k)$, and nothing has to be done.
	So assume that for some $p\geq 2$ and any $k$ such that there is an integer $a$ with $(a^{p-1})\vdash pk$ there is exactly one SSYT of shape $(a^{p-1})$ filled with $k$ 1's, $\ldots$, $p$'s.
	
	Let $k$ and $a$ be such that $(a^p)\vdash (p+1)k$.
	Note that we have $k<a$ and $2k>a$, since $p\geq 2$.
	As entries in a SSYT are non-decreasing along rows and increasing along columns,
	all $k$ $p+1$'s in a SSYT of shape $(a^p)$ filled with $k$ 1's, $\ldots$, $p+1$'s must be in the rightmost $k$ boxes of the last row.
	Therefore,
	the number of SSYTs of shape $(a^p)$ filled with $k$ 1's, $\ldots$, $p+1$'s equals the number of SSYTs of shape $(a^{p-1},a-k)\vdash pk$ filled with $k$ 1's, $\ldots$, $p$'s.
	By the above argument, which allows us to only consider partitions of length at most $p-1$,
	this equals the number of SSYTs of shape $(k^{p-1})\vdash p(2k-a)$ filled with $2k-a$ 1's, $\ldots$, $p$'s.
	The induction hypothesis yields that this number is 1, as required.
	
	Now let $\lambda\vdash pk$ with $2\leq l(\lambda)\leq p-1$.
	In particular, we have $\lambda_1>k$, justifying the following constructions.
	First, we assume that not all parts of $\lambda$ are equal,
	and construct to different SSYTs with the required property.
	
	Choose $i_0$ such that $\lambda_{i_0+1}<k\leq\lambda_{i_0}$.
	We then for each $i_0<j\leq l(\lambda)$ cross out the rightmost $\lambda_{j}-\lambda_{j+1}$ boxes in the $j$-th row,
	and the rightmost $k-\lambda_{i_0+1}$ boxes in the $i_0$-th row.
	By the choice of $i_0$
	we obtain a Young diagram of some shape $\lambda'$, where $\lambda'\vdash (p-1)k$.
	Since $l(\lambda)\leq p-1$, we in particular have $l(\lambda')\leq p-1$.
	Hence, by \cref{tensorasymp:lemma1} we find a SSYT of shape $\lambda'$ filled with $k$ 1's, $\ldots$, $p-1$'s.
	If we now add back all boxes we crossed out and fill them with $p$, we obtain a SSYT of shape $\lambda$ filled with $k$ 1's, $\ldots$, $p$'s.
	
	We obtain another SSYT with the required property in the following way.
	Choose $i_1$ such that $\lambda_1-\lambda_{i_1+1}\geq k>\lambda_1-\lambda_{i_1}$.
	We then for each $1\leq j\leq i_1$ cross out the $\lambda_j-\lambda_{j+1}$ rightmost boxes in the $j$-th row,
	and the rightmost $k-(\lambda_1-\lambda_{i_1})$ boxes in the $i_1$-th row.
	By the choice of $i_1$ we obtain a Young diagram of some shape $\lambda'$ where $\lambda'\vdash (p-1)k$.
	Then, we proceed as before.

	Since not all parts of $\lambda$ are equal and $\lambda_1>k$,
	we have crossed out $k$ boxes in two distinct ways.
	Therefore, we get two distinct SSYTs of shape $\lambda$ filled with $k$ 1's, $\ldots$, $p$'s.

	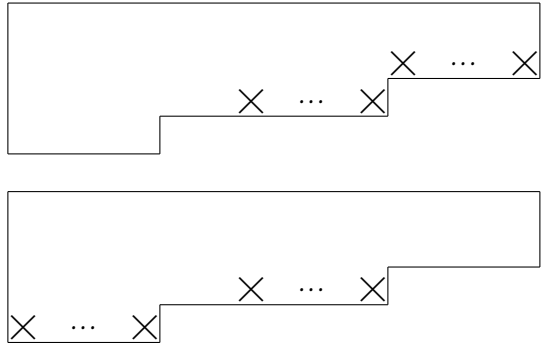
\begin{figure}[H]
		\begin{center}
		\begin{tikzpicture}
			\draw (0,0)--(2,0);
			\node at (.2,.2) {$\varprod$};
			\node at (1,.2) {$\ldots$};
			\node at (1.8,.2) {$\varprod$};
			\draw (2,0) -- (2,.5);
			\draw (2,.5)--(5,.5);
			\node at (3.2,.7) {$\varprod$};
			\node at (4,.7) {$\ldots$};
			\node at (4.8,.7) {$\varprod$};
			\draw (5,.5)--(5,1);
			\draw (5,1)--(7,1);
			\draw (7,1)--(7,2);
			\draw (7,2)--(0,2);
			\draw (0,2)--(0,0);
		\begin{scope}[shift={(-8,2.5)}]
			\draw (8,0)--(10,0);
			\node at (13.2,1.2) {$\varprod$};
			\node at (14,1.2) {$\ldots$};
			\node at (14.8,1.2) {$\varprod$};
			\draw (10,0) -- (10,.5);
			\draw (10,.5)--(13,.5);
			\node at (11.2,.7) {$\varprod$};
			\node at (12,.7) {$\ldots$};
			\node at (12.8,.7) {$\varprod$};
			\draw (13,.5)--(13,1);
			\draw (13,1)--(15,1);
			\draw (15,1)--(15,2);
			\draw (15,2)--(8,2);
			\draw (8,2)--(8,0);
		\end{scope}
		\end{tikzpicture}
		\caption*{schematic picture of both ways to cross out}
		\end{center}
	\end{figure}

	Lastly, let $\lambda\vdash pk$ with $2\leq l(\lambda)<p-1$ and $\lambda_1=\lambda_2=\ldots$.
	In particular, all parts of $\lambda$ are greater than $k$.
	We now cross out the rightmost $k$ boxes in the last row of the Young diagram of shape $\lambda$,
	and obtain a Young diagram of some shape $\lambda'$, where $\lambda'\vdash (p-1)k$.
	As there are more than $k$ boxes in the last row, not all parts of $\lambda'$ are equal.
	Furthermore, $l(\lambda')\leq p-2$, since $l(\lambda)\leq p-2$.
	Therefore, we find two distinct SSYTs of shape $\lambda'$ filled with $k$ 1's, $\ldots$, $p-1$'s by what was shown before.
	If we add back the $k$ boxes we crossed out before and fill them with $p$ in each of these two SSYTs,
	we obtain two distinct SSYTs with the required property, proving the proposition.
%	\begin{figure}[h]
%	\centering
%	\begin{ytableau}
%		\none&&&&&&\\
%		\none&&&&\varprod&\varprod&\varprod
%	\end{ytableau}
%	\caption*{Example with $p=4$, $k=3$ and $\lambda=(6,6)$}
%	\end{figure}	
\end{proof}

\begin{corollary}\label{tensorasymp:cor2}
	Let $\lambda\vdash pk$ with $l(\lambda)\leq p$ for some $p,k\in\N$.
	Then,
	$c_{p,dk}^{d\lambda}$ is constantly 1 for $\lambda$ of the form
	\begin{displaymath}
		(pk),(1^{pk}),(a^{p-1}),(b,c^{p-1}),(b^{p-1},c),
	\end{displaymath}
	and otherwise a quasi-polynomial in $d$ of positive degree with constant leading term.
\end{corollary}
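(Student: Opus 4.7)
The plan is to identify $c_{p,dk}^{d\lambda}$ with an Ehrhart quasi-polynomial and then read off both parts of the claim from Proposition \ref{tensorasymp:prop1} together with standard properties of such quasi-polynomials.

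First, I would inspect the constraints defining $P_{k,p}^{\lambda}$: (i) and (ii) are homogeneous linear inequalities, while (iii) and (iv) are linear with right-hand sides $k$ and $\lambda_i$, respectively. Replacing $k$ by $dk$ and each $\lambda_i$ by $d\lambda_i$ simply scales the bounding hyperplanes by $d$, so $P_{dk,p}^{d\lambda} = d \cdot P_{k,p}^{\lambda}$. By \cref{tensorasymp:prop1a}, the number of SSYTs of shape $d\lambda$ filled with $dk$ copies of $1,\ldots,p$ equals the number of lattice points in $P_{dk,p}^{d\lambda}$, and by the definition of $c_{p,k}^{\lambda}$ (\cref{def:c}) this number is $c_{p,dk}^{d\lambda}$. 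Hence
\begin{displaymath}
  c_{p,dk}^{d\lambda} \;=\; \mathcal{L}_{P_{k,p}^{\lambda}}(d).
\end{displaymath}
By Ehrhart's theorem, the right-hand side is a quasi-polynomial in $d$ of degree $\dim P_{k,p}^{\lambda}$, with leading coefficient the (relative) lattice volume $\vol(P_{k,p}^{\lambda})$ — in particular a genuine constant, independent of the residue class of $d$.

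Now I would split into cases according to whether $\lambda$ is exceptional. If $\lambda$ has one of the shapes listed, then so does $d\lambda$ (with $k$ replaced by $dk$), so Proposition \ref{tensorasymp:prop1} applied at level $d$ gives exactly one SSYT, i.e.\ $c_{p,dk}^{d\lambda}=1$ for all $d$. If $\lambda$ is not exceptional, Proposition \ref{tensorasymp:prop1} produces at least two distinct SSYTs of shape $\lambda$ filled with $k$ $1$'s, $\ldots$, $k$ $p$'s, hence at least two distinct lattice points in $P_{k,p}^{\lambda}$. Two distinct points force $\dim P_{k,p}^{\lambda}\geq 1$, so the Ehrhart quasi-polynomial above has positive degree while its leading coefficient is still the constant $\vol(P_{k,p}^{\lambda})$.

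There is no real obstacle here; the argument is essentially a packaging of the two earlier results. The only point that requires any verification is the scaling identity $P_{dk,p}^{d\lambda}=d\cdot P_{k,p}^{\lambda}$, which is immediate from the form of the constraints (i)--(iv), and the observation that the exceptional shapes are closed under the map $(k,\lambda)\mapsto(dk,d\lambda)$.
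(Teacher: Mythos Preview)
Your argument is correct and, for the most part, matches the paper's proof: both identify $c_{p,dk}^{d\lambda}$ with the Ehrhart count $\mathcal{L}_{P_{k,p}^{\lambda}}(d)$ via the scaling $P_{dk,p}^{d\lambda}=d\cdot P_{k,p}^{\lambda}$, use \cref{tensorasymp:prop1} to handle the exceptional shapes, and use the existence of two lattice points in the non-exceptional case to force $\dim P_{k,p}^{\lambda}\geq 1$.

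The one genuine difference is how you obtain the \emph{constant} leading term. You appeal directly to the standard Ehrhart-theoretic fact that the top coefficient of $\mathcal{L}_{P}(d)$ for a rational polytope $P$ equals its relative volume and is therefore independent of the residue class of $d$. The paper does \emph{not} use this at this stage; instead it defers the constancy of the leading term to \cref{connection:cor1}, where it is deduced from an algebraic monotonicity argument: the graded algebra $\bigoplus_d B_d$ is an integral domain with $\dim B_d=c_{p,dk}^{d\lambda}$, so multiplication by a nonzero degree-one element shows $c_{p,dk}^{d\lambda}\leq c_{p,(d+1)k}^{(d+1)\lambda}$, and a non-decreasing quasi-polynomial must have constant leading coefficient. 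Your route is more self-contained and purely polyhedral, avoiding the forward reference; the paper's route fits its algebraic framework and yields the extra information that the sequence is monotone.
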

\begin{proof}
	By definition $c_{dk,p}^{d\lambda}$ is the number of SSYTs of shape $d\lambda$ filled with $dk$ 1's, $\ldots$, $p$'s, i.e.,
	\begin{displaymath}
		\#\left(P_{dk,p}^{d\lambda}\cap\Z^{2+3+\ldots+p}\right).
	\end{displaymath}
	By \cref{tensorasymp:prop1}, there is exactly one such SSYT for $\lambda$ of the form
	\begin{displaymath}
		(pk),(1^{pk}),(a^{p-1}),(b,c^{p-1}),(b^{p-1},c),
	\end{displaymath}
	and hence $c_{dk,p}^{d\lambda}$ is constantly 1 for those partitions.
	So assume $\lambda$ is not of this form.

	As $P_{dk,p}^{d\lambda}=dP_{k,p}^{\lambda}$ and $P_{k,p}^{\lambda}$ is a rational polytope,
	the number of SSYTs of shape $d\lambda$ filled with $dk$ 1's, $\ldots$, $p$'s is a quasi-polynomial in $d$,
	if $\dim(P_{k,p}^\lambda)>0$.
	As there are at least two such SSYTs of shape $\lambda$ by \cref{tensorasymp:prop1},
	the affine space spanned by $P_{k,p}^\lambda$ contains a line, and therefore $\dim(P_{k,p}^\lambda)>0$; as required.
	We shall see in \cref{connection:cor1} that the leading coefficient is constant.
\end{proof}
\subsection{Relating asymptotics of plethysm to multiplicities in tensor products}
Let $p,k\in\N$, and let $V$ be a finite dimensional complex vector space, $n=\dim(V)$.

Consider the commutative graded algebra $\bigoplus_{d\geq 0}(S^{dk}(V))^{\otimes p}$, which clearly is finitely generated and an integral domain.
As $(S^{dk}(V))^{\otimes p}$ for an any $d\geq0$ is a representation of both $\GL(V)$ and $S_p$, and their actions commute,
elements of both $\GL(V)$ and $S_p$ give graded algebra automorphisms of $\bigoplus_{d\geq0}(S^{dk}(V))^{\otimes p}$ which commute.
Here, $S_p$ acts in the right via $x_1\otimes\ldots x_p\cdot\sigma=x_{\sigma(1)}\otimes\ldots\otimes x_{\sigma(p)}$.

In general, the action of a torus $T$ or $\GL(V)$ on an algebra $A$ is called rational if every $a\in A$ is contained in a finite dimensional subspace $A'\subset A$ on which $T$ or $\GL(V)$ respectively acts rationally.
In particular, $\GL(V)$ acts rationally on $\bigoplus_{d\geq0}(S^{dk}(V))^{\otimes p}$,
as any element is contained in $\bigoplus_{N\geq d\geq0}(S^{dk}(V))^{\otimes p}$ for some $N\in\N$, which is a rational representation of $\GL(V)$.

From now on, fix $p,k\in\N$, $\lambda\vdash pk$ with $l(\lambda)\leq p$, a vector space $V$ with $n:=\dim(V)\geq p\geq l(\lambda)$, a Borel $B\subset\GL(V)$ with maximal unipotent subgroup $U\subset B$ and torus $T\subset B$, so $B=TU$.
We define
\begin{displaymath}
	T_\lambda:=\{t\in T:t^\lambda=t_1^{\lambda_1}\ldots t_n^{\lambda_n}=1\},\quad
	A_d:=(S^{dk}(V))^{\otimes p},\quad
	B_d:=(A_d^U)^{T_\lambda}
\end{displaymath}
for $d\geq0$.
\begin{proposition}\label{connection:prop1}
	The algebra $\bigoplus_{d\geq0}B_d$
	is finitely generated,
	equipped with an action of $S_p$ via graded algebra automorphisms, i.e., we have a group homomorphism
	\begin{displaymath}
		\beta:S_p\to\Aut(\bigoplus_{d\geq 0}B_d)
	\end{displaymath}
	whose image consists of graded algebra homomorphism,
	so that we get representations $\beta_d:S_p\to\GL(B_d)$ for each $d\geq 0$.
	Furthermore,
	the multiplicity of the Specht module $V_\mu$ for some $\mu\vdash p$ in $B_d$ equals $a_{\mu,(dk)}^{d\lambda}$,
	i.e., the multiplicity of $\mathbb{S}_{d\lambda}(V)$ in $\mathbb{S}_\mu(S^{dk}(V))$, and $\dim(B_d)$ equals $c_{p,dk}^{d\lambda}$, i.e., the multiplicity of $\mathbb{S}_{d\lambda}(V)$ in $A_d$.
	Also, $B_d$ is the space of highest weight vectors of weight $d\lambda$ in $(S^{dk}(V))^{\otimes p}$.
\end{proposition}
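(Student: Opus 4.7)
The plan is to reduce every structural claim in the proposition to a single master identification: $B_d$ equals the weight-$d\lambda$ subspace of highest weight vectors in $A_d$. Once this is in place, Schur--Weyl duality produces both multiplicity assertions, commutation of the $\GL(V)$- and $S_p$-actions produces the $S_p$-structure on $B_d$, and finite generation follows from a standard invariant-theoretic chain.

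First I would establish the identification. By definition of $U$, $A_d^U$ is the space of highest weight vectors in $A_d$, and $T$ decomposes it into weight spaces. Since $\lambda\neq 0$, the homomorphism $\lambda\colon T\to\C^*$ is surjective, so $T/T_\lambda\cong\C^*$ and the characters of $T$ trivial on $T_\lambda$ form the subgroup $\Z\lambda$. Every weight appearing in $A_d$ has total $pdk$, so the only element of $\Z\lambda$ with that total is $d\lambda$. Hence $B_d=(A_d^U)^{T_\lambda}=(A_d^U)_{d\lambda}$, which is the last assertion of the proposition.

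Next, for the $S_p$-action and the multiplicities: the diagonal $\GL(V)$- and permutation $S_p$-actions on $A_d$ commute, so $S_p$ preserves $A_d^U$ and each $T$-weight space, hence $B_d$. A direct check on the definition of multiplication in $\bigoplus_d A_d$ as entry-wise product of tensors shows the $S_p$-action is by graded algebra automorphisms, and this restricts to $\bigoplus_d B_d$. Schur--Weyl duality gives
\[
A_d\cong\bigoplus_{\mu\vdash p}V_\mu\otimes\mathbb{S}_\mu(S^{dk}(V))
\]
as $S_p\times\GL(V)$-modules; taking $U$-invariants, then the weight-$d\lambda$ subspace, and using that the multiplicity of $\mathbb{S}_{d\lambda}(V)$ in $\mathbb{S}_\mu(S^{dk}(V))$ is $a^{d\lambda}_{\mu,(dk)}$, yields
\[
B_d\cong\bigoplus_{\mu\vdash p}V_\mu^{\oplus a^{d\lambda}_{\mu,(dk)}}
\]
as $S_p$-modules. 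This is the multiplicity statement, and summing dimensions recovers $\dim(B_d)=c^{d\lambda}_{p,dk}$, since the same sum $\sum_\mu\dim(V_\mu)\,a^{d\lambda}_{\mu,(dk)}$ also counts the total multiplicity of $\mathbb{S}_{d\lambda}(V)$ in $A_d$.

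Finally, for finite generation I would chain three standard facts. The algebra $\bigoplus_d A_d$ is the Segre product of $p$ copies of the $k$-th Veronese of the polynomial ring $S^\bullet V$, generated in degree one and hence finitely generated. Passing to $U$-invariants preserves finite generation by the Hadziev--Grosshans theorem, since $\GL(V)$ is reductive. Finally, $T_\lambda$ is linearly reductive as a closed subgroup of a torus, so $\bigoplus_d B_d$ is again finitely generated. The step I expect to demand the most care is the very first: pinning down that $\Z\lambda$ is exactly the lattice of characters killing $T_\lambda$ and combining this with the fixed total weight $pdk$ to isolate $d\lambda$. All subsequent steps are then essentially bookkeeping.
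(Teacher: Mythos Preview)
Your proof is correct and follows the same overall strategy as the paper: identify $B_d$ with the weight-$d\lambda$ highest weight space of $A_d$, apply Schur--Weyl duality for the multiplicity claims, and invoke Hadziev--Grosshans together with reductivity of $T_\lambda$ for finite generation. The only notable difference is in the identification step: the paper explicitly constructs elements $t\in T_\lambda$ with $t\cdot v\neq v$ for highest weight vectors $v$ of weight $\pi\neq d\lambda$, whereas your character-lattice argument (characters of $T$ trivial on $T_\lambda$ are exactly $\Z\lambda$, and the total-degree constraint $|\pi|=pdk=d|\lambda|$ then forces $\pi=d\lambda$) reaches the same conclusion more conceptually.
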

\begin{proof}
	The general linear group $\GL(V)$ acts on each graded piece of $\bigoplus_{d\geq0}A_d$, so
	\begin{displaymath}
		\left(\bigoplus_{d\geq0}A_d\right)^U=
		\bigoplus_{d\geq0}A_d^U.
	\end{displaymath}
	By {\cite[Thm. 9.4]{Gro97}} this is a finitely generated algebra,
	with graded pieces spanned by highest weight vectors \cite[p. 115]{Fu97},
	on whom the torus $T$ and therefore in particular $T_\lambda$ acts.
	So we furthermore get
	\begin{displaymath}
		\left(\bigoplus_{d\geq0}A_d^U\right)^{T_\lambda}=
		\bigoplus_{d\geq0}(A_d^U)^{T_\lambda}=\bigoplus_{d\geq0}B_d,
	\end{displaymath}
	which is finitely generated by \cite[Thm. A]{Gro97} as $T^\lambda$ is clearly reductive and acts rationally.
	As the actions of $S_p$ and $\GL(V)$ commute, and we only take invariants with respect to the $\GL(V)$-action,
	$S_p$ still acts on $\bigoplus_{d\geq0}B_d$.
	We now look at the graded pieces of this algebra.
	
	By Schur-Weyl duality we have
	\begin{displaymath}
		A_d=\bigoplus\limits_{\mu\vdash p}V_\mu\otimes\mathbb{S}_\mu(S^{dk}(V))
	\end{displaymath}
	as a $S_p\times\GL(V)$ representation.
	Taking $U$-invariants yields
	\begin{displaymath}
		A_d^U=\bigoplus\limits_{\mu\vdash p}V_\mu\otimes\mathbb{S}_\mu(S^{dk}(V))^U,
	\end{displaymath}
	as $\GL(V)$ only acts on the plethysms $\mathbb{S}_\mu(S^{dk}(V))$.
	By \cite[p. 115]{Fu97} a vector $v\in A_d$ for $\mu\vdash p$ is a highest weight vector if and only if it is $U$-invariant and a weight vector.
	As we have a weight space decomposition,
	this implies that $A_d^U$ and $\mathbb{S}_\mu(S^{dk}(V))^U$ are the spaces spanned by respective highest weight vectors.
	
	Now let $v\in A_d^U$ be a highest weight vector with weight $\pi\vdash pdk$, $l(\pi)\leq p$,
	where $\pi\neq d\lambda$.
	We might assume $\dim(V)\geq 2$.
	
	If $\lambda=(pk)$, then $t:=(1,2,1,\ldots,1)\in T_\lambda$ and $t\cdot v=2^{\pi_2}v\neq v$, as $\pi\neq d\lambda=(pdk)$ and therefore $\pi_2>0$.
	
	Otherwise, since $\abs{d\lambda}=\abs{\pi}$,
	we can assume $\pi_1>d\lambda_1$ and $\pi_2<d\lambda_2$ without loss of generality.
	Then 
	\begin{displaymath}
		t:=(2^{\frac{1}{\lambda_1}},2^{-\frac{1}{\lambda_2}},1,\ldots,1)\in T_\lambda
	\end{displaymath}
	and
	\begin{displaymath}
		t\cdot v=2^{\frac{\pi_1}{\lambda_1}-\frac{\pi_2}{\lambda_2}}v\neq v,
	\end{displaymath}
	since $\frac{\pi_1}{\lambda_1}-\frac{\pi_2}{\lambda_2}=d\left(\frac{\pi_1}{d\lambda_1}-\frac{\pi_2}{d\lambda_2}\right)>d(1-1)=0$.
	
	As highest weight vectors of weight $d\lambda$ clearly are invariant under $T_\lambda$,
	we get that $B_d$ consist of exactly the highest weight vectors of weight $d\lambda$,
	implying
	\begin{displaymath}
		c_{p,dk}^{d\lambda}=\dim((A_d^U)^{T_\lambda}).
	\end{displaymath}
	Also, passing to $T_\lambda$-invariants yields
	\begin{displaymath}
		B_d=(A_d^U)^{T_\lambda}=\bigoplus\limits_{\mu\vdash p}V_\mu\otimes(\mathbb{S}_\mu(S^{dk}(V))^U)^{T_\lambda}.
	\end{displaymath}
	Thereby, the multiplicity of $V_\mu$ in $B_d$ for any $\mu\vdash p$ is $\dim(\mathbb{S}_\mu(S^{dk}(V))^U)^{T_\lambda})$,
	and again we deduce
	\begin{displaymath}
		a_{\mu,(dk)}^{d\lambda}=
		\dim(\mathbb{S}_\mu(S^{dk}(V))^U)^{T_\lambda}).
	\end{displaymath}
\end{proof}
In general,
if $C=\bigoplus_{d\geq0}C_d$ is a finitely generated, graded algebra, then $\dim(C_d)$ is a quasi-polynomial in $d$,
partially recovering \cref{tensorasymp:cor2}. 
We can use the algebra structure to improve \cref{tensorasymp:cor2}.
\begin{corollary}\label{connection:cor1}
	The function $c_{p,dk}^{d\lambda}$ is a non-negative, non-decreasing quasi-polynomial of positive degree with constant leading term,
	if $\lambda$ is not of the form
	\begin{displaymath}
		(pk),(k^{p}),(a^{p-1}),(b,c^{p-1}),(b^{p-1},c),
	\end{displaymath}
	and otherwise constantly 1.
\end{corollary}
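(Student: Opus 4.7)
The plan is to reduce both remaining claims to properties of the finitely generated graded algebra $\bigoplus_{d\geq 0} B_d$ supplied by \cref{connection:prop1}. Non-negativity is trivial since $c_{p,dk}^{d\lambda} = \dim(B_d)$, and the exceptional cases together with quasi-polynomiality of positive degree in the remaining cases have already been handled in \cref{tensorasymp:cor2}. Only monotonicity and constancy of the leading coefficient need to be established.

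For monotonicity, I would first verify that $\bigoplus_{d\geq 0} A_d$ is an integral domain. Setting $R := \bigoplus_{d\geq 0} S^{dk}(V)$, this is a Veronese subalgebra of $\Sym(V)$ and hence a domain; the algebra $R^{\otimes p}$, a tensor product of finitely generated integral domains over the algebraically closed field $\C$, is again a domain, and $\bigoplus_{d\geq 0} A_d$ embeds into $R^{\otimes p}$ as the diagonal subalgebra cut out by $d_1 = \ldots = d_p$. As a subalgebra of a domain, $\bigoplus_{d\geq 0} B_d$ is itself a domain. Next, \cref{tensorasymp:lemma1} furnishes a SSYT of shape $\lambda$ filled with $k$ $1$'s, $\ldots$, $k$ $p$'s, so by Pieri's rule $B_1 \neq 0$. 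Picking any nonzero $f \in B_1$, multiplication by $f$ is an injection $B_d \hookrightarrow B_{d+1}$ for every $d$, whence $\dim(B_d) \leq \dim(B_{d+1})$.

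Finally I would deduce that any non-decreasing quasi-polynomial of positive degree has constant leading coefficient. Write $q(d) = c_n(d) d^n + \ldots + c_0(d)$ with each $c_i$ periodic of common period $N$. A direct expansion gives
\[
q(d+1) - q(d) = \bigl(c_n(d+1) - c_n(d)\bigr) d^n + O(d^{n-1}).
\]
Since $\sum_{i=0}^{N-1} \bigl(c_n(i+1) - c_n(i)\bigr) = 0$, if $c_n$ were non-constant there would exist a residue $d_0$ modulo $N$ with $c_n(d_0+1) < c_n(d_0)$; along the arithmetic progression $d \equiv d_0 \pmod N$ the difference $q(d+1) - q(d)$ would then diverge to $-\infty$, contradicting monotonicity. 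Hence $c_n$ is constant, which together with the preceding step completes the proof.

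The main obstacle is verifying that $\bigoplus_{d\geq 0} A_d$ is a domain, but this unwinds entirely into two standard facts: Veronese subalgebras of polynomial rings are domains, and tensor products of integral domains over an algebraically closed field remain integral domains. The monotonicity-forces-constant-leading-term step is a clean piece of elementary asymptotics and presents no difficulty.
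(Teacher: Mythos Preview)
Your argument is correct and follows essentially the same approach as the paper's own proof: both establish monotonicity by multiplying by a nonzero element of $B_1$ (whose existence comes from \cref{tensorasymp:lemma1}) and using that $\bigoplus_{d\geq 0} B_d$ is an integral domain, then invoke \cref{tensorasymp:cor2} for the quasi-polynomial structure. You supply two details the paper leaves implicit---the verification that $\bigoplus_{d\geq 0} A_d$ is a domain (the paper simply asserts this before \cref{connection:prop1}) and the elementary argument that a non-decreasing quasi-polynomial has constant leading term (the paper says ``clearly'')---but the strategy is the same.
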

\begin{proof}
	By \cref{tensorasymp:cor2} $c_{p,dk}^{d\lambda}$ is a quasi-polynomial of positive degree,
	and if $c_{p,dk}^{d\lambda}$ is non-decreasing in $d$ its leading term clearly has to be constant,
	so it is enough to show that $c_{dk,p}^{d\lambda}$ is non-decreasing.
	
	By \cref{tensorasymp:lemma1} $c_{p,k}^{\lambda}\geq 1$,
	and hence $\dim((A_1^U)^{T_\lambda})\geq1$ by the above proposition \cref{connection:prop1}.
	So let $v\in(A_1^U)^{T_\lambda}$, $v\neq 0$.
	Since $\bigoplus_{d\geq0}(A_d^U)^{T_\lambda}$ is a graded algebra, $v\cdot(A_d^U)^{T_\lambda}\subset(A_{d+1}^U)^{T_\lambda}$.
	Using \cref{connection:prop1} this yields
	\begin{displaymath}
		c_{p,dk}^{d\lambda}=
		\dim((A_d^U)^{T_\lambda})=
		\dim(v\cdot(A_d^U)^{T_\lambda})\leq
		\dim((A_{d+1}^U)^{T_\lambda})=c_{p,(d+1)k}^{(d+1)\lambda},
	\end{displaymath}
	as $\bigoplus_{d\geq0}A_d$ is an integral domain.
\end{proof}
We now can use the following slight modification of \cite{How89}, which gives us a direct path to deducing asymptotics of $a_{p,(dk)}^{d\lambda}$ in $d$.
\begin{theorem}\label{connection:Howe}
	Let $\beta:S_p\to\Aut(\bigoplus_{d\geq0}B_d)$ be the group homomorphism giving rise to representations $\beta_d:S_p\to\GL(B_d)$ for each $d\geq 0$ as in \cref{connection:prop1}, and define
	\begin{displaymath}
		PK:=\{\sigma\in S_p:\forall d\geq0\ \exists c\in\C^*:\beta_d(\sigma)=c\cdot \id\}.
	\end{displaymath}
	Then, if $PK=\{1\}$, we have
	\begin{displaymath}
		\lim\limits_{d\to\infty}\frac{f_\mu(d)}{\dim(B_d)}=\frac{\dim(V_\mu)}{p!}
	\end{displaymath}
	 for any $\mu\vdash p$, where $f_\mu(d)$ is defined as the multiplicity of $V_\mu$ in $B_d$.
\end{theorem}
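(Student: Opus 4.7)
The plan is to reduce the asymptotic statement to a trace estimate via character theory, and then to exploit the graded algebra and domain structure of $B:=\bigoplus_{d\geq 0}B_d$ together with the hypothesis $PK=\{1\}$ to control those traces.

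By orthogonality of characters, writing $\chi_\mu$ for the character of $V_\mu$,
\begin{displaymath}
f_\mu(d)=\frac{1}{p!}\sum_{\sigma\in S_p}\chi_\mu(\sigma)\,\trace(\beta_d(\sigma))=\frac{\dim(V_\mu)}{p!}\dim(B_d)+\frac{1}{p!}\sum_{\sigma\neq 1}\chi_\mu(\sigma)\,\trace(\beta_d(\sigma)),
\end{displaymath}
so it suffices to prove $\trace(\beta_d(\sigma))=o(\dim B_d)$ as $d\to\infty$ for every fixed $\sigma\in S_p\setminus\{1\}$.

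For such a $\sigma$ of order $m$, let $\zeta$ be a primitive $m$-th root of unity. Since $\beta$ acts by graded algebra automorphisms, the $\sigma$-eigenspace decomposition equips $B$ with a compatible $\Z/m$-grading $B=\bigoplus_{j=0}^{m-1}B^{(j)}$ satisfying $B^{(i)}\cdot B^{(j)}\subseteq B^{(i+j\bmod m)}$, and $\trace(\beta_d(\sigma))=\sum_{j=0}^{m-1}\zeta^j\dim B_d^{(j)}$. Because $B\subseteq\bigoplus_d(S^{dk}(V))^{\otimes p}\subseteq(\Sym V)^{\otimes p}$ lies inside a polynomial ring, $B$ is an integral domain, so $N:=\{j\in\Z/m:B^{(j)}\neq 0\}$ is closed under addition and hence a subgroup of $\Z/m$. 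If $N$ were a proper subgroup of order $k<m$, every eigenvalue of $\beta_d(\sigma)$ on every $B_d$ would be a $k$-th root of unity, forcing $\beta_d(\sigma^k)=\id$ for every $d$ and placing the non-identity element $\sigma^k$ in $PK$, contradicting $PK=\{1\}$. Therefore $N=\Z/m$ and every $B^{(j)}$ is nonzero.

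I would then pick, for each $j$, a nonzero $v_j\in B^{(j)}_{d_j}$. Since $B$ is a domain, multiplication by $v_j$ is injective, giving $\dim B_d^{(i)}\leq\dim B_{d+d_j}^{(i+j)}$ for all $d,i,j$; chaining multiplication by $v_j$ and $v_{m-j}$ produces the sandwich $\dim B_d^{(0)}\leq\dim B_{d+d_j}^{(j)}\leq\dim B_{d+d_j+d_{m-j}}^{(0)}$. Each $\dim B_d^{(j)}$ is a quasi-polynomial in $d$ (since $B^{(j)}$ is a finitely generated graded module over the finitely generated algebra $B^{(0)}=B^{\langle\sigma\rangle}$, over which $B$ itself is a finite module), so combined with $\sum_j\dim B_d^{(j)}=\dim B_d$ and \cref{connection:cor1} the sandwich forces $\dim B_d^{(j)}/\dim B_d\to 1/m$ for every $j$, whence
\begin{displaymath}
\frac{\trace(\beta_d(\sigma))}{\dim B_d}\;\longrightarrow\;\frac{1}{m}\sum_{j=0}^{m-1}\zeta^j\;=\;0.
\end{displaymath}
The main obstacle I foresee lies precisely in this last implication: the shift inequalities only pin down $\dim B_d^{(j)}$ up to the periodic oscillation of its leading coefficient as a quasi-polynomial, and for the genuine limit to exist these leading coefficients must actually coincide across $j$. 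I would resolve this by observing that $PK=\{1\}$ forces $\langle\sigma\rangle$ to act faithfully on $B$, so that $\mathrm{Frac}(B)/\mathrm{Frac}(B^{(0)})$ is a Galois extension of degree $m$ and each $B^{(j)}$ has rank one as a $B^{(0)}$-module; a Hilbert-series comparison for rank-one torsion-free graded modules over the graded domain $B^{(0)}$ then identifies their full quasi-polynomial leading behavior.
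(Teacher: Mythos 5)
Your proposal follows the paper's overall blueprint (character-theoretic reduction, eigenspace $\Z/m$-grading of $B$, multiplicativity of nonzero eigenspaces from the integral-domain property, using $PK=\{1\}$ to rule out degeneracy), but there is a genuine gap at the precise point where $PK=\{1\}$ is invoked, and it is not the one you diagnose. You deduce that $N:=\{j:B^{(j)}\neq 0\}$ is all of $\Z/m$, where $B^{(j)}=\bigoplus_d B_d^{(j)}$ is the $\zeta^j$-eigenspace. But $N$ is the \emph{union over all degrees} of the eigenvalue sets, and this is strictly weaker than what the argument needs. Consider $B=\C[x]$ with $m=2$ and $\sigma\cdot x=-x$: here $B^{(0)}=\C[x^2]$, $B^{(1)}=x\C[x^2]$ are both nonzero, so $N=\Z/2$, yet $\dim B_d^{(0)}/\dim B_d$ oscillates between $0$ and $1$ and never converges to $1/2$. (Of course in this example $\sigma\in PK$, but the point is that the implication ``$N=\Z/m$ $\Rightarrow$ equidistribution'' you rely on from that point onward is false, and your later steps — the sandwich, the quasi-polynomiality of each $\dim B_d^{(j)}$, and the Galois/rank-one patch — use only $N=\Z/m$ and never invoke $PK=\{1\}$ again. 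So no amount of Hilbert-series bookkeeping can close the gap without a further input.)

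What the paper actually establishes from $PK=\{1\}$ is stronger: it considers the group $G_\sigma$ generated by ratios $\omega_1\omega_2^{-1}$ with $\omega_1,\omega_2$ eigenvalues of $\beta_d(\sigma)$ \emph{at the same degree} $d$. If $|G_\sigma|=r<m$ then all eigenvalues of $\beta_d(\sigma^r)$ at a fixed $d$ coincide, so $\beta_d(\sigma^r)$ is a \emph{scalar} (not necessarily the identity), placing $\sigma^r\in PK$ — this is the sharper use of the definition of $PK$ that your argument misses, since $N$ proper only detects $\sigma^k$ acting as the identity. With $G_\sigma=\Z/m$, the multiplicativity $R_{d_1}(\sigma)\cdot R_{d_2}(\sigma)\subset R_{d_1+d_2}(\sigma)$ yields a single $d_0$ with $B_{d_0}^{(j)}\neq 0$ for \emph{every} $j$ simultaneously. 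Multiplying by such elements gives $\dim B_d^{(\omega_1)}\leq\dim B_{d+d_0}^{(\omega)}$ for all $\omega_1,\omega$, hence $\dim B_d\leq m\dim B_{d+d_0}^{(\omega)}$ after summing over $\omega_1$. Combined with $\dim B_d/\dim B_{d+d_0}\to 1$ (from \cref{connection:cor1}) this gives $\liminf_d\dim B_d^{(\omega)}/\dim B_d\geq 1/m$, and since these ratios sum to $1$ over the $m$ characters, each limit exists and equals $1/m$. This sidesteps entirely the question of whether the individual $\dim B_d^{(j)}$ are quasi-polynomials with matching leading coefficients, which is precisely where your approach stalls.
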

\begin{proof}
	Assume $PK=\{1\}$,
	and let $\beta_d:S_p\to\GL(B_d)$ denote the group homomorphism with $\beta(\sigma)|_{B_d}=\beta_d(\sigma)$ for any $\sigma\in S_p$.
	Furthermore, let $\mu\vdash p$.

	By \cite[Cor. 2.16]{FH04} the multiplicity of $V_\mu$ in $B_d$ is
	\begin{displaymath}
		f_\mu(d)=\frac{1}{p!}\sum\limits_{\sigma\in S_p}\trace(\beta_d(\sigma))\chi_\mu(\sigma)=
		\frac{\dim(V_\mu)}{p!}\dim(B_d)+\frac{1}{p!}\sum\limits_{\sigma\in S_p,\sigma\neq\id}\trace(\beta_d(\sigma))\chi_\mu(\sigma),
	\end{displaymath}
	where $\chi_\mu$ denotes the character of $V_\mu$.
	Therefore,
	\begin{displaymath}
		\frac{f_\mu(d)}{\dim(B_d)}=
		\frac{\dim(V_\mu)}{p!}+\frac{1}{p!}\sum\limits_{\sigma\in S_p,\sigma\neq\id}\frac{\trace(\beta_d(\sigma))}{\dim(B_d)}\chi_\mu(\sigma),
	\end{displaymath}
	so proving
	\begin{displaymath}
		\lim\limits_{d\to\infty}\frac{\trace(\beta_d(\sigma))}{\dim(B_d)}=0
	\end{displaymath}
	for $\sigma\neq\id$ proves the claim.
	So fix $\sigma\in S_p$, $\sigma\neq\id$, and let $o(\sigma)$ denote the order of $\sigma$, i.e., the minimal $k\in\N$ such that $\sigma^k=\id$.
	As
	\begin{displaymath}
		\beta_d(\sigma)^{o(\sigma)}=
		\beta_d(\sigma^{o(\sigma)})=\beta_d(\id)=\id,
	\end{displaymath}
	the possible eigenvalues of $\beta_d(\sigma)$ for any $d\geq 0$ are the $o(\sigma)$-th roots of unity $\omega$, of whom there are $o(\sigma)$.
	Let $B_d(\sigma,\omega)$ be the space of eigenvectors of $\beta_d(\sigma)$ with eigenvalue $\omega$, so
	\begin{displaymath}
		B_d=\bigoplus\limits_{\omega}B_d(\sigma,\omega),
	\end{displaymath}
	as $\beta_d(\sigma)$ is of finite order in $\GL(B_d)$ and hence diagonalizable.
	Furthermore, for $v_1\in B_{d_1}(\sigma,\omega_1)$ and $v_2\in B_{d_2}(\sigma,\omega_2)$,
	where $\omega_1$ and $\omega_2$ are $o(\sigma)$-th roots of unity,
	\begin{displaymath}
		\beta_{d_1+d_2}(\sigma)(v_1\cdot v_2)=
		\beta(\sigma)(v_1)\cdot\beta(\sigma)(v_2)=
		\beta_{d_1}(\sigma)(v_1)\cdot\beta_{d_2}(\sigma)(v_2)=
		\omega_1\omega_2v_1\cdot v_2,	
	\end{displaymath}
	i.e., $v_1\cdot v_2\in B_{d_1+d_2}(\sigma,\omega_1\omega_2)$.
	As $\bigoplus_{d\geq0}B_d$ is an integral domain,
	this implies that if $\omega_1$ is an eigenvalue of $\beta_{d_1}(\sigma)$ and $\omega_2$ is an eigenvalue of $\beta_{d_2}(\sigma)$, then $\omega_1\omega_2$ is an eigenvalue of $\beta_{d_1+d_2}(\sigma)$, i.e.,
	\begin{equation}\label{connection:eq1}
		R_{d_1}(\sigma)\cdot R_{d_2}(\sigma)\subset R_{d_1+d_2}(\sigma),
	\end{equation}
	where we denote the set of eigenvalues of $\beta_d(\sigma)$ by $R_d(\sigma)$, the spectrum of $\beta_d(\sigma)$.
	Now let $\omega_1,\omega_2\in R_d(\sigma)$.
	Then, for any $0\leq j\leq o(\sigma)-1$,
	\begin{displaymath}
		(\omega_1\omega_2^{-1})^j=
		\omega_1^j\omega_2^{o(\sigma)-j}\in R_{o(\sigma)d}(\sigma)
	\end{displaymath}
	by \cref{connection:eq1}.
	As $\omega_1^{o(\sigma)},\omega_2^{o(\sigma)}=1$ we also have $(\omega_1\omega_2^{-1})^{o(\sigma)}=1$,
	and therefore $R_{o(\sigma)d}(\sigma)$ contains the group generated by $\omega_1\omega_2^{-1}$.
	
	Furthermore, if $R_{d_1}(\sigma)$ contains the group $G_1$ and $R_{d_2}$ contains the group $G_2$,
	then by \cref{connection:eq1} $R_{d_1+d_2}(\sigma)$ contains the group $G_1G_2$ generated by $G_1$ and $G_2$.
	As $\omega_1\omega_2^{-1}$ for any $o(\sigma)$-th roots of unity $\omega_1,\omega_2$ can only attain finitely man values,
	each of the values $\omega_1\omega_2^{-1}$ for any $d\geq 0$ and $\omega_1,\omega_2\in R_d(\sigma)$ must have been attained by $\omega_1\omega_2^{-1}$ for $\omega_1,\omega_2\in R_d(\sigma)$ where $N\geq d\geq 0$ for some $N\in\N$.
	The above arguments now show that there is some $d_0\geq0$ such that $R_{d_0}(\sigma)$ contains the group generated by all ratios $\omega_1\omega_2^{-1}$,
	where $\omega_1,\omega_2\in R_d(\sigma)$ for some $d\geq 0$.
	Fix such a $d_0$.
	We claim that $R_{d_0}(\sigma)$ is the full group of $o(\sigma)$-th roots of unity.
	
	Indeed, the group $G_\sigma$ generated by all ratios $\omega_1\omega_2^{-1}$,
	where $\omega_1,\omega_2\in R_d(\sigma)$ for some $d\geq 0$,
	is a subgroup of the group of $o(\sigma)$-th roots of unity.
	Therefore, $r:=\abs{G_\sigma}\leq o(\sigma)$ and $r$ divides $o(\sigma)$.
	Furthermore, the eigenvalues of $\beta_d(\sigma^r)$ for some $d\geq 0$ are the $r$-th powers of eigenvalues of $\beta_d(\sigma)$,
	as $\beta_d(\sigma^r)=\beta_d(\sigma)^r$.
	If now $\omega_1,\omega_2\in R_{d}(\sigma^r)$,
	then there are $\tilde\omega_1,\tilde\omega_2\in R_d(\sigma)$ with $\omega_1=\tilde\omega_1^r$, $\omega_2=\tilde\omega_2^r$,
	and therefore
	\begin{displaymath}
		\omega_1\omega_2^{-1}=
		\tilde\omega_1^r\tilde\omega_2^{-r}=
		(\tilde\omega_1\tilde\omega_2^{-1})^r=1,
	\end{displaymath}
	as $\tilde\omega_1\tilde\omega_2^{-1}\in G_\sigma$ and $\abs{G_\sigma}=r$.
	But this implies that $\beta_d(\sigma^r)$ is just multiplication by a scalar for any $d\geq 0$, and therefore $\sigma^r\in PK$.
	As $PK=\{1\}$, we conclude $\sigma^r=\id$ and hence $\abs{G_\sigma}=r=o(\sigma)$,
	so $G_\sigma$ is the full group of $o(\sigma)$-th roots of unity.

	Furthermore, for any $d\geq 0$ and $o(\sigma)$-th roots of unity $\omega_1$ and $\omega$ we have
	\begin{displaymath}
		B_{d}(\sigma,\omega_1)\cdot B_{d_0}(\sigma,\omega_1^{-1}\omega)\subset B_{d+d_0}(\sigma,\omega),
	\end{displaymath}
	and therefore, as $\bigoplus_{d\geq0}B_d$ is an integral domain and $B_{d_0}(\sigma,\omega_1^{-1}\omega)\neq\{0\}$,
	\begin{displaymath}
		\dim(B_{d}(\sigma,\omega_1))\leq
		\dim(B_{d+d_0}(\sigma,\omega)).
	\end{displaymath}
	Since $B_{d}=\bigoplus_{\omega_1}B_{d}(\sigma,\omega_1)$,
	where the direct sum ranges over all $o(\sigma)$-th roots of unity $\omega_1$,
	we conclude
	\begin{displaymath}
		\dim(B_{d})\leq
		o(\sigma)\dim(B_{d+d_0}(\sigma,\omega))
	\end{displaymath}
	by summing over all $o(\sigma)$ many $o(\sigma)$-th roots of unity $\omega_1$.
	By \cref{connection:cor1} and \cref{connection:prop1} $\dim(B_d)$ is either a quasi-polynomial of positive degree in $d$ with constant leading term or constantly 1,
	from which we conclude
	\begin{displaymath}
		\lim\limits_{d\to\infty}\frac{\dim(B_{d})}{\dim(B_{d+d_0})}=1,
	\end{displaymath}
	which combined with
	$\dim(B_{d})\leq
		o(\sigma)\dim(B_{d+d_0}(\sigma,\omega))$ for any $d\geq0$ and $o(\sigma)$-th root of unity $\omega$ yields
	\begin{displaymath}
		\liminf_{d\to\infty}\frac{\dim(B_d(\sigma,\omega))}{\dim(B_d)}\geq\frac{1}{o(\sigma)}.
	\end{displaymath}
	But $B_d=\bigoplus_{\omega}B_d(\sigma,\omega)$, where the sum ranges over all $o(\sigma)$ many $o(\sigma)$-th roots of unity $\omega$,
	so we must already have
	\begin{displaymath}
		\lim\limits_{d\to\infty}\frac{B_d(\sigma,\omega)}{\dim(B_d)}=\frac{1}{o(\sigma)}
	\end{displaymath}
	for any $o(\sigma)$-th root of unity $\omega$.
	With this we finally deduce
	\begin{displaymath}
		\lim\limits_{d\to\infty}\frac{\trace(\beta_d)(\sigma)}{\dim(B_d)}=
		\lim\limits_{d\to\infty}\sum\limits_{\omega}\frac{\omega\dim(B_d(\sigma,\omega))}{\dim(B_d)}=
		\sum\limits_{\omega}\omega\lim\limits_{d\to\infty}\frac{\dim(B_d(\sigma,\omega))}{\dim(B_d)}=
		\frac{1}{o(\sigma)}\sum\limits_{\omega}\omega=
		0,
	\end{displaymath}
	where the sums ranges over all $o(\sigma)$-th roots of unity $\omega$.
\end{proof}
Using this theorem and the \cref{connection:prop1} we can now deduce asymptotics of the plethysm coefficients $a_{\mu,(dk)}^{d\lambda}$, if $PK=\{1\}$.
This gets even easier if we use basic properties of the symmetric group.
We denote the alternating group $A_p:=\{\sigma\in S_p:\sign(\sigma)=1\}\subset S_p$ by $A_p$.
Then, for $p\neq 4$, $A_p$ is the only non-trivial normal subgroup of $S_p$, and for $p=4$ the non-trivial normal subgroups are $V\subset A_4$, where $V$ is the Klein four group.
\begin{lemma}\label{connection:cor3}
The only irreducible representations of $S_p$ on which each element of $A_p$ acts as a scalar are the trivial representation $S_p\to\C^*,\sigma\mapsto1$ and the sign representation $S_p\to\C^*,\sigma\mapsto\sign(\sigma)$.
\end{lemma}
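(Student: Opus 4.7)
The plan is to reduce the problem to showing that the restriction $\rho|_{A_p}$ is trivial whenever $\rho\colon S_p\to\GL(V)$ is an irreducible representation on which $A_p$ acts by scalars; once this is established, $\rho$ factors through $S_p/A_p\cong\Z/2\Z$, forcing it to be one dimensional and hence the trivial or sign representation.

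So suppose $\rho\colon S_p\to\GL(V)$ is irreducible and that for every $\sigma\in A_p$ we have $\rho(\sigma)=\chi(\sigma)\cdot\id_V$ for some scalar $\chi(\sigma)\in\C^*$. Since $\rho$ is a homomorphism and the scalars form an abelian subgroup of $\GL(V)$, the assignment $\sigma\mapsto\chi(\sigma)$ is a character $\chi\colon A_p\to\C^*$. The first key step is to observe that $\chi$ is invariant under $S_p$-conjugation: for any $\tau\in S_p$ and $\sigma\in A_p$, we have $\tau\sigma\tau^{-1}\in A_p$, and
\begin{displaymath}
    \chi(\tau\sigma\tau^{-1})\cdot\id_V = \rho(\tau)\rho(\sigma)\rho(\tau)^{-1} = \chi(\sigma)\rho(\tau)\rho(\tau)^{-1}=\chi(\sigma)\cdot\id_V,
\end{displaymath}
because $\rho(\sigma)$ is a scalar and hence commutes with $\rho(\tau)$. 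Thus $\chi$ is constant on $S_p$-conjugacy classes within $A_p$.

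The next step, which is the crux of the argument, is to evaluate $\chi$ on 3-cycles (assuming $p\geq 3$; the cases $p\leq 2$ are trivial since then $A_p$ is trivial). All 3-cycles form a single $S_p$-conjugacy class, and a 3-cycle $\sigma$ and its inverse $\sigma^{-1}$ are both 3-cycles, hence $S_p$-conjugate. The $S_p$-invariance of $\chi$ thus yields $\chi(\sigma)=\chi(\sigma^{-1})=\chi(\sigma)^{-1}$, so $\chi(\sigma)^2=1$. Combined with $\chi(\sigma)^3=1$ (since $\sigma$ has order $3$), this forces $\chi(\sigma)=1$. Since $A_p$ is generated by 3-cycles for $p\geq 3$, $\chi$ is the trivial character.

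Consequently $\rho(\sigma)=\id_V$ for every $\sigma\in A_p$, so $\rho$ descends to an irreducible representation of $S_p/A_p\cong\Z/2\Z$, which is necessarily one dimensional. The two options correspond exactly to the trivial and sign representations of $S_p$, completing the proof. There is no real obstacle: the only subtle point is to combine the identity $\chi(\sigma)=\chi(\sigma^{-1})$, which uses both that 3-cycles fuse under $S_p$-conjugation and that $\chi$ is $S_p$-invariant, with the order condition $\chi(\sigma)^3=1$ to conclude $\chi(\sigma)=1$; everything else is a routine reduction to the abelianization of $A_p$.
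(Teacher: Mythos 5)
Your proof is correct, and it takes a genuinely different route from the paper. The paper argues that the character $\sigma\mapsto c_\sigma$ must be trivial because commutators $\sigma\tau\sigma^{-1}\tau^{-1}$ with $\sigma,\tau\in A_p$ are sent to $\id$ and such commutators generate $A_p$; this relies on $A_p$ being perfect, which is true only for $p\geq 5$ (for $p=4$ the commutator subgroup is only the Klein four group $V$, and for $p=3$ it is trivial), so the paper must treat $p=4$ by direct enumeration of the five irreducible representations of $S_4$ and glosses over $p=3$. You instead observe that the scalar character $\chi$ of $A_p$ is necessarily invariant under conjugation by all of $S_p$, not just $A_p$, and then exploit that a $3$-cycle and its inverse lie in a single $S_p$-conjugacy class: this forces $\chi(\sigma)^2=1$ for every $3$-cycle $\sigma$, which together with $\chi(\sigma)^3=1$ gives $\chi(\sigma)=1$, and since $3$-cycles generate $A_p$ you get triviality of $\chi$ uniformly for all $p\geq 3$. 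The two arguments both finish by descending to $S_p/A_p\cong\Z/2\Z$. Your approach buys uniformity — it handles $p=3$ and $p=4$ in the same stroke as $p\geq 5$ and dispenses with the special-case enumeration — at the cost of a slightly more delicate observation (conjugation invariance of $\chi$ under the larger group $S_p$, rather than just a computation with commutators inside $A_p$). Either route is a valid elementary proof of the lemma.
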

\begin{proof}
	First, let $p\neq 4$.
	Let $\rho:S_p\to\GL(V)$ be an irreducible representation of $S_p$ such that for each $\sigma\in A_p$ there is an $c_\sigma\in\C^*$ with $\rho(\sigma)=c_\sigma\id$.
	
	If now $\sigma,\tau\in A_p$ with $\rho(\sigma)=c_\sigma\id$ and $\rho(\tau)=c_\tau\id$,
	then
	\begin{displaymath}
		\rho(\sigma\tau\sigma^{-1}\tau^{-1})=
		\rho(\sigma)\rho(\tau)\rho(\sigma^{-1})\tau(\sigma)^{-1}=
		c_\sigma c_\tau c_\sigma^{-1} c_\tau^{-1} \id=\id.
	\end{displaymath}
	As elements of this form generate $A_p$,
	we conclude that $\rho(A_p)=\{\id\}$.
	Therefore, we get a representation
	\begin{displaymath}
		[\rho]:S_p/A_p\to\GL(V),[\sigma]\to\rho(\sigma),
	\end{displaymath}
	where $[\sigma]$ denotes the residue class $\sigma A_p$ of $\sigma$ in $S_p/A_p$.
	
	Furthermore,
	$[\sign]$ and $[\mathrm{trivial}]$,
	where $\sign$ and $\mathrm{trivial}$ denote the $\sign$ representation and trivial representation of $S_p$ respectively,
	are distinct irreducible representations of $S_p/A_p$, both of degree 1,
	and
	\begin{displaymath}
		\abs{S_p/A_p}=2=1^2+1^2.
	\end{displaymath}
	Thereby, \cite[Cor. 2.18]{FH04} shows that these are the only irreducible representations of $S_p/A_p$.
		
	Lastly, if $p=4$ one can just list all 5 irreducible presentations of $S_4$ and note that exactly the trivial and sign representations are those for which $A_4$ acts as a scalar.
\end{proof}
As the group $PK$ from \cref{connection:Howe} is clearly normal, either $PK=\{1\}$ or $A_p\subset PK$.
In the latter case, all vectors in $B_d$ for any $d\geq 0$ are a sum of symmetric and skew-symmetric vectors by \cref{connection:cor3}.
Hence, we only have to find a highest weight vector which is not a sum of a symmetric and skew-symmetric vector to show $PK=\{1\}$ and thereby deducing results on the asymptotics of $a_{\mu,(dk)}^{d\lambda}$ for any $\mu\vdash p$.
This is exactly what we are concerned with in the next section.
\subsection{Constructing highest weight vectors, final result}
From now on, we assume that a basis $v_1,\ldots,v_n$ of $V$ is fixed, such that elements of the form
\begin{displaymath}
	x^{\alpha_1}\otimes\ldots\otimes x^{\alpha_p},
\end{displaymath}
where $\alpha_1,\ldots,\alpha_p\in\N_0^n$, $\abs{\alpha_1}=\ldots=\abs{\alpha_p}=k$, form a basis of $(S^k(V))^{\otimes p}$.
In the following, when talking about coefficients of these basis vectors, we shall mean coefficients with respect to this basis.
\begin{definition}
	Let $T$ be a SSYT of some shape $\pi$ with $l(\pi)\leq p$, filled with $1,\ldots,p$, $\mu:=\pi^T$,
	and let $j_T(a,b)$ for $1\leq a\leq l(\lambda)$ and $1\leq b\leq \lambda_a$ denote the entry of $T$ of the box in the $a$-th row and $b$-th column.
	We now define
	\begin{displaymath}
		j_T:=(j_T(l(\lambda),\lambda_{l(\lambda)}),\ldots,j_T(l(\lambda),1),\ldots,j_T(1,\lambda_1),\ldots,j_T(1,1)),
	\end{displaymath}
	i.e., $j_T$ is the vector we obtain from $T$ by reading entries right to left and bottom to top,
	or by ordering the entries $j_T(a,b)$ according to the position $(a,b)$ decreasingly in the lexicographic order.
	Furthermore,
	we define
	\begin{displaymath}
		h_T:=
		\sum\limits_{\sigma_1\in S_{\mu_1},\ldots,\sigma_{\lambda_1}\in S_{\mu_{\lambda_1}}}\sign(\sigma_1)\ldots\sign(\sigma_{\lambda_1})
		\bigotimes\limits_{i=1}^p\prod\limits_{(a,b):j_T(a,b)=i}x_{\sigma_b(a)}\in S^{i_1}(V)\otimes\ldots\otimes S^{i_p}(V),
	\end{displaymath}
	where $i_m$ for $1\leq m\leq p$ is the number of entries of $T$ equal to $m$.
\end{definition}
	We are going to show that for each SSYT of shape $\lambda$ filled with $k$ 1's, $\ldots$, $p$'s $h_T$ is a highest weight vector of weight $\lambda$.
	In fact, they form a basis of the space of highest weight vectors in $(S^k(V))^{\otimes p}$ of weight $\lambda$.
\begin{example}
	Consider the following tableau of shape $(4,2)$
	\begin{displaymath}
		T=\young(1123,23).
	\end{displaymath}
	We get
	\begin{displaymath}
		j_T=(3,2,3,2,1,1),
	\end{displaymath}
	and, by \glqq permuting entries of the columns of $T$\grqq,
	\begin{align*}
		h_T=&
		\sign(\id)\sign(\id)x_{\id(1)}x_{\id(1)}\otimes x_{\id(2)}x_1\otimes x_{\id(2)}x_1\\
		&+\sign((12))\sign(\id)x_{(12)(1)}x_{\id(1)}\otimes x_{(12)(2)}x_1\otimes x_{\id(2)}x_1\\
		&+\sign(\id)\sign((12))x_{\id(1)}x_{(12)(1)}\otimes x_{\id(2)}x_1\otimes x_{(12)(2)}x_1\\
		&+\sign((12))\sign((12))x_{(12)(1)}x_{(12)(1)}\otimes x_{(12)(2)}x_1\otimes x_{(12)(2)}x_1\\
		=&x_1^2\otimes x_1x_2\otimes x_1x_2-
		x_1x_2\otimes x_1^2\otimes x_1x_2
		-x_1x_2\otimes x_1x_2\otimes x_1^2
		+x_2^2\otimes x_1^2\otimes x_1^2.
	\end{align*}
\end{example}
\begin{proposition}\label{connection:prop}
	The $h_T$ for $T$ SSYTs of shape $\lambda$ filled with $k$ $1's,\ldots, p's$ form a basis of the space of highest weight vectors of weight $\lambda$ in $(S^k(V))^{\otimes p}$.
\end{proposition}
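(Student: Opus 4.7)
The plan is to verify three things: (a) each $h_T$ is a nonzero highest weight vector of weight $\lambda$; (b) the $h_T$'s are linearly independent; (c) their cardinality equals the dimension of the space of highest weight vectors of weight $\lambda$ in $(S^k(V))^{\otimes p}$. Step (c) is immediate: by \cref{tensorasymp:prop1a} the number of SSYTs of shape $\lambda$ filled with $k$ $1$'s, $\ldots$, $k$ $p$'s equals $c_{p,k}^\lambda$, which by \cref{def:c} is the multiplicity of $\mathbb{S}_\lambda(V)$ in $(S^k(V))^{\otimes p}$, and this coincides with the dimension of the $\lambda$-highest-weight space.

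For (a), the weight computation is essentially bookkeeping: inside any fixed column $b$, the values $\sigma_b(1),\ldots,\sigma_b(\mu_b)$ range over a permutation of $1,\ldots,\mu_b$, so regardless of the choice of $\sigma_b$ the column contributes each variable $x_1,\ldots,x_{\mu_b}$ exactly once; summing over columns shows $x_a$ occurs with total multiplicity $\#\{b:\mu_b\geq a\}=\lambda_a$. For the $U$-invariance, I would check that every positive root vector $E_{i,j}$ (with $i<j$) annihilates $h_T$. Since $E_{i,j}$ acts as a derivation on $(S^k(V))^{\otimes p}$ replacing some $x_j$ by $x_i$, for every column $b$ with $\mu_b\geq j$ the unique $x_j$ in that column becomes an $x_i$, producing a column with two copies of $x_i$; pairing each $\sigma_b$ with the composition of $\sigma_b$ and the transposition that swaps the two rows now containing $x_i$ yields two terms with identical post-replacement tensors but opposite signs, which cancel column by column.

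For (b), the strategy is a leading-term argument. The summand of $h_T$ with all $\sigma_b$ equal to the identity is the distinguished monomial $M_T:=\bigotimes_{i=1}^p\prod_{(a,b):j_T(a,b)=i}x_a$, and I would show that $M_T$ is the lex-maximum among the basis vectors $\bigotimes_i x^{\alpha_i}$ appearing in $h_T$, where the basis is ordered lexicographically by the tuple $(\alpha_1,\ldots,\alpha_p)$. Inductively on $i$: having already maximised $\alpha_1,\ldots,\alpha_{i-1}$, maximising $\alpha_i$ forces $\sigma_b(a)=a$ for all boxes $(a,b)$ with $j_T(a,b)\leq i$, using the SSYT property that entries strictly increase down columns. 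Processing $i$ from $1$ to $p$ pins $\sigma$ to the identity, so the coefficient of $M_T$ in $h_T$ is $1$. Since $M_T$ encodes, for each pair $(a,i)$, the number of $i$'s in row $a$ of $T$, and rows of SSYTs are determined by their multisets of entries, distinct $T$'s yield distinct $M_T$'s, so the $h_T$'s are linearly independent.

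The main technical obstacle is the inductive leading-term argument in (b), which requires careful tracking of how maximality of $\alpha_1,\ldots,\alpha_{i-1}$ constrains the column permutations; the SSYT property is crucial for keeping these constraints compatible. The cancellation trick for $U$-invariance in (a) is routine, but the columnwise antisymmetrisation must be set up cleanly for it to work.
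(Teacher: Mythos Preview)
Your proposal is correct and is precisely the standard direct argument that the paper defers to \cite[Prop.~2.3]{MM14}; the weight and $U$-invariance checks in (a) are routine, the counting in (c) is exactly Pieri, and the leading-term argument in (b) is the expected route. Your lex-max claim in (b) is a mild strengthening of the paper's \cref{connection:lemma4}: the latter only asserts that $\sigma=\id$ is the unique tuple producing $M_T$, whereas your inductive argument (which is sound, using that each column contains at most one $i$ and that the constraints from earlier steps force $\sigma_b(a)\geq a$) shows $M_T$ is the unique lex-maximal monomial, which immediately gives linear independence since distinct SSYTs $T$ have distinct $M_T$.
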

\begin{proof}
	This works like \cite[Prop. 2.3]{MM14}, where the \glqq dual problem\grqq\  $(\wedge^k(V))^{\otimes p}$ is considered.
\end{proof}
\begin{lemma}\label{connection:lemma4}
	Let $T$ be a SSYT of some shape $\pi$ filled with $1,\ldots,p$, $\mu:=\pi^T$, and let $\sigma_1\in S_{\mu_1},\ldots,\sigma_{\pi_1}\in S_{\mu_{\pi_1}}$ such that
	\begin{displaymath}
		\bigotimes\limits_{i=1}^p\prod\limits_{(a,b):j_T(a,b)=i}x_{\sigma_b(a)}=
		\bigotimes\limits_{i=1}^p\prod\limits_{(a,b):j_T(a,b)=i}x_{a}=:t.
	\end{displaymath}
	Then, $\sigma_1=\id,\ldots,\sigma_{\pi_1}=\id$.
\end{lemma}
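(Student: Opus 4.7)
The plan is to show $\sigma_b = \id$ by reverse induction on $i$ running from $p$ down to $1$, forcing $\sigma_b$ to fix one more row of its column at each stage. To set up notation, for each value $i$ let $I_i := \{b : i \text{ occurs in column } b \text{ of } T\}$; since $T$ is a SSYT, its columns are strictly increasing, so for $b \in I_i$ the occurrence is unique, at a row $a_b(i)$. Let $t_b^{(i)}$ denote the number of entries $\leq i$ in column $b$; the same strict-increase property implies that those entries fill the prefix of rows $\{1, \ldots, t_b^{(i)}\}$ of column $b$, and in particular $a_b(i) = t_b^{(i)}$ whenever $b \in I_i$.

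Comparing the $i$-th tensor factors in the hypothesized equality and using that a monomial in a symmetric power is determined by its multiset of indices, one obtains for every $i$ the identity
\begin{displaymath}
\{\sigma_b(a_b(i)) : b \in I_i\} = \{a_b(i) : b \in I_i\}
\end{displaymath}
of multisets in $\mathbb N$; this is the single input that will drive the induction.

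For the inductive step, assume $\sigma_b$ fixes $a_b(i')$ for all $i' > i$ and $b \in I_{i'}$. The rows of column $b$ that carry values $> i$ are precisely the suffix $\{t_b^{(i)}+1, \ldots, \mu_b\}$, so the assumption says that $\sigma_b$ fixes this entire suffix; being a permutation in $S_{\mu_b}$, it therefore restricts to a permutation of the complementary prefix $\{1, \ldots, t_b^{(i)}\}$. For $b \in I_i$ this yields the key inequality
\begin{displaymath}
\sigma_b(a_b(i)) = \sigma_b(t_b^{(i)}) \leq t_b^{(i)} = a_b(i).
\end{displaymath}

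Promoting this inequality to equality via the multiset identity is the one delicate point. Let $M$ be the largest value occurring in $\{a_b(i) : b \in I_i\}$, with multiplicity $N$. For a column $b$ to contribute $M$ to the left-hand multiset one needs $\sigma_b(a_b(i)) = M$; together with the inequality this forces $a_b(i) = M$, hence at most $N$ such contributions exist, and multiset equality forces $\sigma_b(M) = M$ for every $b \in I_i$ with $a_b(i) = M$. Iterating in decreasing order through the finitely many distinct values of $a_b(i)$ yields $\sigma_b(a_b(i)) = a_b(i)$ for every $b \in I_i$, completing the induction. Once the induction terminates, the rows $a_b(i)$ as $i$ ranges over the entries of column $b$ exhaust $\{1, \ldots, \mu_b\}$, so $\sigma_b = \id$ for every $b$, as claimed.
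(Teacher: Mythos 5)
Your proof is correct, and it takes a genuinely different route from the paper's. The paper inducts on the number of boxes in $T$: it looks at the last row, identifies the rightmost maximal block of equal entries, argues via a counting-of-variables argument in the corresponding tensor factor that the permutations $\sigma_b$ for those columns must already fix $l(\pi)$, and then deletes that block to obtain a smaller SSYT to which the induction hypothesis applies. You instead do a reverse induction on the value $i$ from $p$ down to $1$: the inductive hypothesis is that each $\sigma_b$ already fixes the suffix of rows carrying entries $>i$, so $\sigma_b$ permutes the prefix $\{1,\ldots,t_b^{(i)}\}$, giving the key inequality $\sigma_b(a_b(i))\leq a_b(i)$; you then promote this to equality using the per-value multiset identity extracted from the $i$-th tensor factor. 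Both exploit the same feature of SSYTs --- strict increase down columns --- but from dual vantage points (bottom row versus largest value). Your version avoids the case analysis involved in picking out the block in the last row and in passing to a deleted tableau, which arguably makes it a bit more self-contained; the paper's version is perhaps more pictorial. One small cosmetic point: the "iterating in decreasing order through the distinct values of $a_b(i)$" step can be replaced by a one-line argument --- the multiset identity gives $\sum_{b\in I_i}\sigma_b(a_b(i))=\sum_{b\in I_i}a_b(i)$, and since each summand on the left is at most the corresponding one on the right, equality must hold termwise.
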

\begin{proof}
	We use induction on $\abs{\pi}$.
	For $\pi=(1)$ the claim obviously is true.
	
	So assume for some $N\in\N$ the claim holds for all SSYTs of some shape $\pi'$ filled with $1,\ldots,p$, where $\abs{\pi'}\leq N$,
	and let $T$ be a SSYT filled with $1,\ldots,p$ of shape $\pi\vdash N+1$,
	as well as $\sigma_1\in S_{\mu_1},\ldots,\sigma_{\pi_1}\in S_{\mu_{\pi_1}}$ such that
	\begin{displaymath}
		\bigotimes\limits_{i=1}^p\prod\limits_{(a,b):j_T(a,b)=i}x_{\sigma_b(a)}=
		\bigotimes\limits_{i=1}^p\prod\limits_{(a,b):j_T(a,b)=i}x_{a}.
	\end{displaymath}
	
	Let $k\in\N$ be minimal such that
	\begin{displaymath}
		j_T(l(\pi),\pi_{l(\pi)})=\ldots=j_T(l(\pi),k).
	\end{displaymath}
	As $T$ is a SSYT and by the choice of $k$,
	entries to the left of the $k$-th column are smaller than $j_T(l(\pi),k)$,
	so any of the variables in the $j_T(l(\pi),k)$-th component
	\begin{displaymath}
		\prod\limits_{(a,b): j_T(a,b)=j_T(l(\pi),k)}x_{\sigma_b(a)}
	\end{displaymath}
	must come from $(a,b)$ with $b\geq k$.
	But for $(a,b)$ with $b>l(\pi)$ we have $\sigma_b(a)<l(\pi)$, as $\mu_b<l(\pi)$ for these $(a,b)$,
	and hence each variable $x_{l(\pi)}$ must come from $\sigma_k(l(\pi)),\ldots,\sigma_{\lambda_{l(\pi)}}(l(\pi))$, i.e.,
	\begin{displaymath}
		\sigma_k(l(\pi))=\ldots=\sigma_{\pi_{l(\pi)}}(l(\pi))=l(\pi).
	\end{displaymath}
	Now let $S$ be the SSYT obtained from $T$ by deleting the $k$-th up to $\pi_{l(\pi)}$-th box in the last row.
	As
	\begin{displaymath}
		\sigma_k(l(\pi))=\ldots=\sigma_{\pi_{l(\pi)}}(l(\pi))=l(\pi),
	\end{displaymath}
	we have
	\begin{displaymath}
		\bigotimes_{i=1}^p\prod\limits_{(a,b):j_S(a,b)=i}x_{\sigma_b(a)}=
		\bigotimes_{i=1}^p\prod\limits_{(a,b):j_S(a,b)=i}x_{a}.
	\end{displaymath}
	The induction hypothesis yields $\sigma_1=\id,\ldots,\sigma_{\pi_1}=\id$,
	where we view $\sigma_k,\ldots,\sigma_{\pi_{l(\pi)}}\in S_{l(\pi)-1}$.
	But $\sigma_1(l(\pi))=\ldots=\sigma_{\pi_{l(\pi)}}(l(\pi))=l(\pi)$,
	and therefore $\sigma_1,\ldots,\sigma_{\pi_{l(\pi)}}=\id$ as elements of $S_{l(\pi)}$,
	and the claim follows.
\end{proof}
\begin{remark}
	As elements of the form $x^{\alpha_1}\otimes\ldots\otimes x^{\alpha_p}$ with $\alpha_1,\ldots,\alpha_p\in\N_0^n$, $\abs{\alpha_1}=\ldots=\abs{\alpha_p}=k$ form a basis of $(S^k(V))^{\otimes p}$,
	the above
	 \cref{connection:lemma4} shows that the coefficient w.r.t. this basis of 
	\begin{displaymath}
	\bigotimes\limits_{i=1}^p\prod\limits_{(a,b):j_T(a,b)=i}x_{a}
	\end{displaymath}
	in $h_T$ is 1, so in particular $h_T\neq 0$,
	where $T$ is a SSYT of shape $\lambda$ filled with $k$ 1's, $\ldots$, $p$'s.
\end{remark}
\begin{proposition}\label{connection:prop3}
	If $\lambda$ is not of the form $(pk)$ or $(a^{p-1})$ for some integer $a$,
	$l(\lambda)\leq p-1$, and $p\geq 3$, then there is an even permutation $\sigma\in S_p$ and a tableau $T$ of shape $\lambda$ filled with $k$ 1's $\ldots$, $p$'s such that $h_T\cdot\sigma\neq h_T$.
	Furthermore, if $p=4$ and $\lambda\neq (2k,2k)$, we can choose $\sigma\in V$.
\end{proposition}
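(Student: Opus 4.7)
The plan is to construct, for each admissible $\lambda$, a SSYT $T$ of shape $\lambda$ filled with $k$ copies each of $1,\ldots,p$ and an even $\sigma \in S_p$ (with $\sigma \in V$ when $p=4$) so that $h_T \cdot \sigma \neq h_T$, the inequality being detected by a single basis vector. Unfolding the right action on tensor factors gives
\begin{displaymath}
h_T \cdot \sigma = \sum_{\sigma_1,\ldots,\sigma_{\lambda_1}} \sign(\sigma_1)\cdots\sign(\sigma_{\lambda_1}) \bigotimes_{i=1}^p \prod_{(a,b):j_T(a,b)=\sigma(i)} x_{\sigma_b(a)},
\end{displaymath}
while the Remark after Lemma \ref{connection:lemma4} says the standard monomial $w_T := \bigotimes_{i=1}^p \prod_{(a,b):j_T(a,b)=i} x_a$ occurs in $h_T$ with coefficient $1$. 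Hence it suffices to choose $T,\sigma$ so that $w_T$ does not occur in $h_T \cdot \sigma$ at all. Writing $M_i^T$ for the multiset of row indices of boxes labeled $i$ in $T$, this reduces to the combinatorial condition that for some $i$, no system of column permutations $(\sigma_b)$ yields the multiset $M_i^T$ when we apply the $\sigma_b$'s to the rows of positions labeled $\sigma(i)$.

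For $p \geq 3$ with $p \neq 4$, every $3$-cycle is even. Because entry $1$ can only lie in the top row of an SSYT, $M_1^T = \{1^k\}$ automatically. Using that $\lambda$ is neither $(pk)$ nor $(a^{p-1})$ and $l(\lambda) \leq p-1$, a greedy filling modeled on Lemma \ref{tensorasymp:lemma1} produces a SSYT $T$ in which at least one copy of entry $p$ lies strictly below row $1$, so $M_p^T$ contains some row index $> 1$. Strict column-increasingness in $T$ then implies that no column permutation of the $p$-labeled positions can land entirely in row $1$, ruling out the standard-monomial condition above for the factor $i$ with $\sigma(i) = p$; choosing $\sigma = (1\ p\ q)$ for any third label $q$ finishes this case.

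The main obstacle is $p = 4$, where $\sigma$ is confined to the three double transpositions $(12)(34), (13)(24), (14)(23) \in V$. We must construct $T$ so that the tuple $(M_1^T,M_2^T,M_3^T,M_4^T)$ is asymmetric with respect to at least one of these three pairings of coordinates. The hypothesis $\lambda \neq (2k,2k)$ enters crucially here: for the $2 \times 2k$ rectangle every natural SSYT produces a $V$-invariant row-multiset tuple, but otherwise the asymmetry of row lengths (when $l(\lambda) = 2$ with $\lambda_1 > \lambda_2$) or the presence of a third row (when $l(\lambda)=3$, hence not of shape $(a^3)$ under the hypothesis) gives enough freedom to construct a non-$V$-symmetric filling. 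Carrying out this case analysis, subcase by subcase on $l(\lambda)$ and the structure of $\lambda$, then applying the coefficient argument of the first paragraph, completes the proof.
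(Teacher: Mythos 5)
Your framework — detect $h_T\cdot\sigma\neq h_T$ by showing the basis monomial $w_T=\bigotimes_{i}\prod_{(a,b):j_T(a,b)=i}x_a$, which occurs with coefficient $1$ in $h_T$ by the Remark after \cref{connection:lemma4}, does not occur in $h_T\cdot\sigma$ — is the same as the paper's (the paper compares the coefficient of $w_T\cdot\sigma$ in $h_T$, which amounts to replacing $\sigma$ by $\sigma^{-1}$). However, the combinatorial justification you give for the vanishing is incorrect, and this is not a minor slip.

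You claim that because some $p$ lies strictly below row $1$, ``strict column-increasingness implies no column permutation of the $p$-labeled positions can land entirely in row $1$.'' This is false. Columns of an SSYT are strictly increasing, so each column carries \emph{at most one} box labeled $p$; for each such column $b$, the permutation $\sigma_b\in S_{\mu_b}$ is free to send that single row index to $1$, and there is no obstruction at all. So the multiset $\{\sigma_b(a):j_T(a,b)=p\}$ \emph{can} always be made equal to $\{1^k\}$ by a suitable choice of the $\sigma_b$'s — the obstruction, if any, can only come from the effect of those same $\sigma_b$'s on \emph{other} tensor factors. A one-factor-at-a-time argument simply cannot work here. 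The paper's actual obstruction is different in kind: one arranges the tableau $T$ and looks at the tensor slot belonging to $j:=j_T(i_0,\lambda_{i_0})$, the entry at the ``corner'' box. Because column $\lambda_{i_0}$ has height $\mu_{\lambda_{i_0}}=i_0$, the permutation $\sigma_{\lambda_{i_0}}$ lives in $S_{i_0}$ and hence $\sigma_{\lambda_{i_0}}(i_0)\leq i_0$; so \emph{every} term of $h_T$ carries some $x_a$ with $a\leq i_0$ in the $j$-th slot. But $w_T\cdot(j\,p\,\tilde j)$ puts the $p$-labeled entries (all strictly below row $i_0$) into the $j$-th slot, so no such variable appears there — that is the forced vanishing. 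The contrast is precisely that the paper's constraint is rigid (forced by a column of fixed, small height) while your constraint is not a constraint at all.

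The $p=4$ case is also not a proof. Asymmetry of the multiset tuple $(M_1^T,\ldots,M_4^T)$ under a double transposition is not sufficient: even when $M_i^T\neq M_{\sigma(i)}^T$, the column permutations can still produce the monomial $w_T$ as a term of $h_T\cdot\sigma$, so the coefficient argument does not ``apply'' as you assert. The paper instead runs four separate cases, constructs in each a specific tableau $T$ from a specific crossing-out scheme, and exhibits in each an explicit double transposition with a bespoke vanishing argument; the exclusion $\lambda\neq(2k,2k)$ arises concretely because the remaining case (all parts equal, $1<l(\lambda)<p-1$) forces exactly $\lambda=(2k,2k)$ when $p=4$ and the construction there only produces a $3$-cycle, not an element of $V$. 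Your sketch names no tableaux and no double transpositions and so cannot be checked. To repair the argument you would need to (a) identify, for each case, a fixed short column whose permutation is pinned down, as the paper does, and (b) carry out the $p=4$ case analysis explicitly.
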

\begin{proof}
	Let $\mu:=\lambda^T$.
	
	First, we assume that not all parts of $\lambda$ are equal,
	and choose $i_0$ minimal such that $\lambda_{i_0}>\lambda_{i_0+1}$.
	We have to distinguish a few cases.
	The reader might easily verify that, given the assumptions of each case, all crossing out we perform throughout this proof works.
	
	\underline{Case 1:} Assume $\lambda_{i_0+1}\geq k$.
	
	We choose $j_0$ such that $\lambda_{j_0+1}<k\leq\lambda_{j_0}$.
	We then for each $j_0<j\leq l(\lambda)$ cross out the rightmost $\lambda_{j}-\lambda_{j+1}$ boxes in the $j$-th row,
	and the rightmost $k-\lambda_{j_0+1}$ boxes in the $j_0$-th row.
	By the choice of $j_0$
%	Then, we can cross out $k$ boxes in the Young diagram of shape $\lambda$, at most 1 in each column and left justified such that 
	we obtain a Young diagram of some shape $\lambda'$, where $\lambda'\vdash (p-1)k$.
	Since $l(\lambda)\leq p-1$, we in particular have $l(\lambda')\leq p-1$.
	
%	We now cross out $k$ boxes in the Young diagram of shape $\lambda$, at most one per column and left justified such that we still obtain a Young diagram of some shape $\lambda'$ with $\lambda'\vdash (p-1)k$, $l(\lambda')\leq p-1$.
	
	By \cref{tensorasymp:lemma1} we find a SSYT of shape $\lambda'$ filled with $k$ 1's, $\ldots$, $p-1$'s.
	Adding back the boxes we crossed out before and filling them with $p$, we obtain a SSYT $T$ of shape $\lambda$ filled with $k$ 1's, $\ldots$, $p$'s.
	As $\lambda_{i_0+1}\geq k$, all $p$'s are in rows below the $i_0$-th row.
	Furthermore, let $j:=j_T(i_0,\lambda_{i_0})$ be the entry of $T$ in the $i_0$-th row and $\lambda_{i_0}$-th column, and choose any entry $\tilde j$ distinct from both $j$ and $p$.
	We claim that $h_T\cdot (j\ p\ \tilde j)\neq h_T$.
	
	\begin{figure}[H]
		\centering
		\begin{tikzpicture}
			\draw (0,0) -- (2,0);
			\node at (.2,.2) {$p$};
			\node at (1,.2) {$\ldots$};
			\node at (1.8,.2) {$p$};
			\draw (2,0) -- (2,.5);
			\draw (2,.5)--(5,.5);
			\node at (3.2,.7) {$p$};
			\node at (4,.7) {$\ldots$};
			\node at (4.8,.7) {$p$};
			\draw (5,.5)--(5,1);
			\draw[decorate,decoration={brace,amplitude=10pt}] (7,1)--(5,1) node[midway,yshift=-17pt]{no $p$};
			\draw[decorate,decoration={brace,amplitude=10pt}] (7,2)--(7,1) node[midway,xshift=32pt]{$i_0$ rows};
			\draw (5,1)--(7,1);
			\node at (6.8,1.2) {$j$};
			\draw (7,1)--(7,2);
			\draw (7,2)--(0,2);
			\draw (0,2)--(0,0);
			\draw[dashed] (5,1)--(5,2);
		\end{tikzpicture}
		\caption*{schematic picture of $T$}
	\end{figure}

	Indeed, for any $\sigma_1\in S_{\mu_1}, \ldots, \sigma_{\lambda_1}\in S_{\mu_{\lambda_1}}$ in
	\begin{displaymath}
		\prod\limits_{(a,b):j_T(a,b)=j}x_{\sigma_b(a)}
	\end{displaymath}
	at least one of the variables $x_1,\ldots,x_{i_0}$ must appear, as $j_T(i_0,\lambda_{i_0})=j$ and $\sigma_{\lambda_{i_0}}(i_0)\leq i_0$ because of $\sigma_{\lambda_{i_0}}\in S_{\mu_{\lambda_{i_0}}}$ and $\mu_{\lambda_{i_0}}=i_0$.
	Therefore,
	if we look at the coefficients of basis elements $x^{\alpha_1}\otimes\ldots\otimes x^{\alpha_p}$ in $h_T$, where $\alpha_1,\ldots,\alpha_p\in\N_0^n$, $\abs{\alpha_1}=\ldots=\abs{\alpha_p}=k$,
	these can be non-zero only if in $x^{\alpha_j}$ one of the variables $x_1,\ldots,x_{i_0}$ appears.
	
	But in
	\begin{displaymath}
		\prod\limits_{(a,b):j_T(a,b)=p}x_a
	\end{displaymath}
	only $x_{i_0+1},\ldots,x_{l(\lambda)}$ appear, as all $p$ are in rows below the $i_0$-th row of $T$.
	Since the coefficient of the basis element
	\begin{displaymath}
		\bigotimes\limits_{i=1}^p\prod\limits_{(a,b):j_T(a,b)=i}x_{a}
	\end{displaymath}
	in $h_T$ is 1 by \cref{connection:lemma4},
	the coefficient of
	\begin{displaymath}
		\left(\bigotimes\limits_{i=1}^p\prod\limits_{(a,b):j_T(a,b)=i}x_{a}\right)\cdot (j\ p\  \tilde j)
	\end{displaymath}
	in $h_T\cdot (j\ p\ \tilde j)$  is 1,
	and in the $j$-th component none of $x_1,\ldots,x_{i_0}$ appear.
	Therefore, $h_T\cdot (j\ \tilde j\ p)\neq h_T$.
	If $p=4$, simply take $(j\ 4)(j'\ \tilde j)$ for $j,j'\neq p,j$ distinct.
	
	\underline{Case 2:} Assume $\lambda_{i_0+1}<k$ and $i_0\geq 2$.
	As $\lambda\vdash pk$ and $l(\lambda)\leq p$,
	the choice of $i_0$ implies $\lambda_{i_0}>k$.
	We then for each $i_0<j\leq l(\lambda)$ cross out the rightmost $\lambda_{j}-\lambda_{j+1}$ boxes in the $j$-th row,
	and the rightmost $k-\lambda_{i_0+1}$ boxes in the $i_0$-th row.
	As $\lambda_{i_0+1}<k<\lambda_{i_0}$,
	we obtain a Young diagram of some shape $\lambda'$, where $\lambda'\vdash (p-1)k$.
	Furthermore, $l(\lambda')\leq p-2$, as $\lambda_{i_0+1}<k$ and in particular $\lambda_{l(\lambda)}<k$, so that we have crossed out all boxes in the last row.

	Afterwards, we choose $j_1$ such that $\lambda'{}_1-\lambda'{}_{j_1+1}\geq k>\lambda'{}_1-\lambda'{}_{j_1}$.
	We then for each $1\leq j\leq j_1$ cross out the $\lambda'{}_j-\lambda'{}_{j+1}$ rightmost boxes in the $j$-th row,
	and the rightmost $k-(\lambda'{}_1-\lambda'{}_{j_1})$ boxes in the $j_1$-th row of the Young diagram of shape $\lambda'$.
	By the choice of $j_1$ we obtain a Young diagram of some shape $\lambda''$ where $\lambda''\vdash (p-2)k$.

	By \cref{tensorasymp:lemma1} we find a SSYT of shape $\lambda''$ filled with $k$ 1's, $\ldots$, $p-2$'s.
	Adding back the $k$ boxes we crossed out in the second step and filling them with $p-1$, we obtain a SSYT of shape $\lambda'$ filled with $k$ 1's, $\ldots$, $p-1$'s,
	and then adding back the $k$ boxes we crossed out first and filling them with $p$, we obtain a SSYT $T$ of shape $\lambda$ filled with $k$ 1's, $\ldots$, $p$'s.
	By the construction of $T$ there are more $p-1$'s than $p$'s in columns to the right of the $\lambda_{i_0+1}$-th column.
	We claim that $h_T\cdot (1\ p-1\ p)\neq h_T$.

	\begin{figure}[H]
		\centering
		\begin{tikzpicture}
			\draw (0,0) -- (2,0);
			\node at (.2,.2) {$p$};
			\node at (1,.2) {$\ldots$};
			\node at (1.8,.2) {$p$};
			\draw (2,0) -- (2,.5);
			\draw (2,.5)--(5,.5);
			\node at (2.2,.7) {$p$};
			\node at (3.5,.7) {$\ldots$};
			\node at (4.8,.7) {$p$};
			\draw (5,.5)--(5,1);
			\draw[decorate,decoration={brace,amplitude=10pt}] (9,1)--(5,1) node[midway,yshift=-17pt]{more $\tilde p$ than $p$};
			\draw[decorate,decoration={brace,amplitude=10pt}] (9,2)--(9,1) node[midway,xshift=32pt]{$i_0$ rows};
			\draw (5,1)--(9,1);
			\node at (8.8,1.2) {$p$};
			\node at (8,1.2) {$\ldots$};
			\node at (7.2,1.2) {$p$};
			\node at (8.8,1.7) {$\tilde p$};
			\node at (8,1.7) {$\ldots$};
			\node at (7.2,1.7) {$\tilde p$};
			\node at (6.8,1.2) {$\tilde p$};
			\node at (6,1.2) {$\ldots$};
			\node at (5.2,1.2) {$\tilde p$};
			\draw (9,1)--(9,2);
			\draw (9,2)--(0,2);
			\draw (0,2)--(0,0);
			\draw[dashed] (5,1)--(5,2);
		\end{tikzpicture}
		\caption*{schematic picture of $T$, $\tilde p:=p-1$}
	\end{figure}
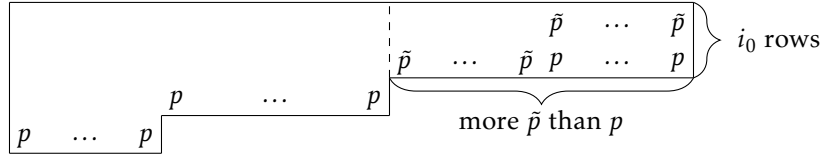

	Indeed, for any $\sigma_1\in S_{\mu_1}, \ldots, \sigma_{\lambda_1}\in S_{\mu_{\lambda_1}}$ in
	\begin{displaymath}
		\prod\limits_{(a,b):j_T(a,b)=p-1}x_{\sigma_b(a)}
	\end{displaymath}
	at least as many of the variables $x_1,\ldots,x_{i_0}$ appear as there are $p-1$'s to the right of the $\lambda_{i_0+1}$-th column of $T$.
	Therefore,
	if we look at coefficients of basis elements $x^{\alpha_1}\otimes\ldots\otimes x^{\alpha_p}$ in $h_T$, where $\alpha_1,\ldots,\alpha_p\in\N_0^n$, $\abs{\alpha_1}=\ldots=\abs{\alpha_p}=k$,
	these can be non-zero only if in $x^{\alpha_{p-1}}$ at least as many of the variables $x_1,\ldots,x_{i_0}$ appear as there are $p-1$'s to the right of the $\lambda_{i_0+1}$-th column of $T$.
	
	But in
	\begin{displaymath}
		\prod\limits_{(a,b):j_T(a,b)=p}x_a
	\end{displaymath}
	only as many $x_{1},\ldots,x_{i_0}$ appear as there are $p$'s to the right of the $\lambda_{i_0+1}$-th column of $T$,
	so in particular fewer then there are $p-1$'s to the right of the $\lambda_{i_0+1}$-th column.
	Since the coefficient of the basis element
	\begin{displaymath}
		\bigotimes\limits_{i=1}^p\prod\limits_{(a,b):j_T(a,b)=i}x_{a}
	\end{displaymath}
	in $h_T$ is 1 by \cref{connection:lemma4},
	the coefficient of
	\begin{displaymath}
		\left(\bigotimes\limits_{i=1}^p\prod\limits_{(a,b):j_T(a,b)=i}x_{a}\right)\cdot (1\ p-1\ p)
	\end{displaymath}
	in $h_T\cdot (1\ p-1\ p)$  is 1,
	and in the $p-1$-th component fewer $x_1,\ldots,x_{i_0}$ appear than there are $p-1$'s to the right of the $\lambda_{i_0+1}$-th column of $T$.
	Therefore, $h_T\cdot (1\ p-1\ p)\neq h_T$.
	If $p=4$, simply take $\sigma=(1\ 2)(3\ 4)$.
	
	\underline{Case 3:} Assume $i_0=1$ and $\lambda_2<k$.
	Then, $\lambda_1\geq 2k$,
	as otherwise $l(\lambda)\leq p-1$ would imply $\abs{\lambda}\leq\lambda_1+(p-2)\lambda_2< 2k+(p-2)k=pk$.
	In particular, we have $\lambda_1-\lambda_2>k$.
	Then, we cross out the rightmost $k$ boxes in the first row,
	and obtain a Young diagram of some shape $\lambda'$ with $\lambda'\vdash (p-1)k$, $l(\lambda')\leq p-1$.
	By \cref{tensorasymp:lemma1} we find a SSYT of shape $\lambda'$ filled with $k$ 1's, $\ldots$, $p-1$'s.
	Adding back the boxes we crossed out before and filling them with $p$, we obtain a SSYT $T$ of shape $\lambda$ filled with $k$ 1's, $\ldots$, $p$'s.
	Let $j:=j_T(2,\lambda_2)$ be the entry of $T$ in the second row and $\lambda_{2}$-th column,
	and choose an entry $\tilde j$ distinct from both $j$ and $p$.
	Arguing similarly as in the preceding cases,
	we see that $h_T\cdot(p\ j\ \tilde j)\neq h_T$, and in case $p=4$ one can simply take $\sigma=(j\ p)(j'\ j'')$ for distinct $j,j'\neq j,p$.
	\begin{figure}[H]
		\centering
		\begin{tikzpicture}
			\draw (4,1)--(5,1);
			\draw (5,1)--(5,1.5);
			\node at (4.8,1.2) {$j$};
			\draw[decorate,decoration={brace,amplitude=10pt}] (9,1.5)--(5,1.5) node[midway,yshift=-17pt]{$k$ many $p$'s};
			\draw (5,1.5)--(9,1.5);
			\draw (9,1.5)--(9,2);
			\draw (9,2)--(4,2);
			\node at (8.8,1.7) {$p$};
			\node at (6.2,1.7) {$p$};
			\node at (7.5,1.7) {$\ldots$};
		\end{tikzpicture}
		\caption*{schematic picture of $T$}
	\end{figure}
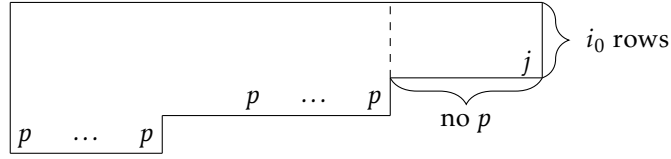
	
	\underline{Case 4:} Lastly, assume all parts of $\lambda$ are equal and $1<l(\lambda)<p-1$.
	Note that for $p=4$ this forces $\lambda=(2k,2k)$, which is exactly the partition we exclude.
	We now cross out the rightmost $k$ boxes in both the $l(\lambda)$-th and $l(\lambda)-1$-th column of the Young diagram of shape $\lambda$,
	 obtaining a Young diagram of some shape $\lambda'\vdash (p-2)k$, $l(\lambda')\leq p-2$.
	By \cref{tensorasymp:lemma1} we find a SSYT of shape $\lambda'$ filled with $k$ 1's, $\ldots$, $p-2$'s.
	We then add back all boxes, and fill those in the $l(\lambda)$-th column with $p$ and those in the $l(\lambda)-1$-th column with $p-1$.
	As $\lambda_1>k$,
	since otherwise $\abs{\lambda}=l(\lambda)\lambda_1\leq(p-2)k<pk$,
	$j:=j_T(l(\lambda),1)$ is neither $p-1$ nor $p$.
	We claim that $h_T\cdot(p-1\ p\ j)\neq h_T$.
	\begin{figure}[H]
		\centering
		\begin{tikzpicture}
			\draw (0,0)--(6,0);
			\draw (1,0)--(1,1);
			\draw (1,.5)--(6,.5);
			\draw (1,1)--(6,1);
			\draw (6,0)--(6,1.5);
			\draw (6,1.5)--(0,1.5);
			\draw (0,1.5)--(0,0);
			\node at (.2,.2) {$j$};
			\node at (3.5,.2) {$k$ many $p$'s};
			\node at (3.5,.7) {$k$ many $p-1$'s};
		\end{tikzpicture}
		\caption*{schematic picture of $T$}
	\end{figure}
	
	Indeed, for any $\sigma_1\in S_{\mu_1}, \ldots, \sigma_{\lambda_1}\in S_{\mu_{\lambda_1}}$ in
	\begin{displaymath}
		\prod\limits_{(a,b):j_T(a,b)=p-1}x_{\sigma_b(a)}\prod\limits_{(a,b):j_T(a,b)=p}x_{\sigma_b(a)}
	\end{displaymath}
	at most $k$ many $x_{l(\lambda)}$ appear.
	Therefore,
	if we look at coefficients of basis elements $x^{\alpha_1}\otimes\ldots\otimes x^{\alpha_p}$ in $h_T$, where $\alpha_1,\ldots,\alpha_p\in\N_0^n$, $\abs{\alpha_1}=\ldots=\abs{\alpha_p}=k$,
	these can be non-zero only if in $x^{\alpha_{p-1}}x^{\alpha_{p}}$ at most $p$ many $x_{l(\lambda)}$ appear.
	
	But in
	\begin{displaymath}
		\prod\limits_{(a,b):j_T(a,b)=j}x_a\prod\limits_{(a,b):j_T(a,b)=p}x_a
	\end{displaymath}
	at least $k+1$ many $x_{l(\lambda)}$ appear.
	Since the coefficient of the basis element
	\begin{displaymath}
		\bigotimes\limits_{i=1}^p\prod\limits_{(a,b):j_T(a,b)=i}x_{a}
	\end{displaymath}
	in $h_T$ is 1 by \cref{connection:lemma4},
	the coefficient of
	\begin{displaymath}
		\left(\bigotimes\limits_{i=1}^p\prod\limits_{(a,b):j_T(a,b)=i}x_{a}\right)\cdot (p-1\ p\ j)
	\end{displaymath}
	in $h_T\cdot(p-1\ p\ j)$  is 1,
	and in the product of the $j$-th and $p$-th component at least $p+1$ many $x_{l(\lambda)}$ appear.
	Therefore, $h_T\cdot(p-1\ p\ j)\neq h_T$,
	concluding the proof.
\end{proof}
With all this preparation, we are now ready to proof our main result,
which in a slightly modified form was conjectured by Kahle and Micha\l ek in \cite[Conj. 4.3]{KM16} for arbitrary $p$ and all $\lambda$,
and proposed to Kahle and Micha\l ek by Mich\`ele Vergne (private communication with the second author of \cite{KM16}).

In \cite[Lemma 4.1]{KM16} a proof for \glqq non exceptional\grqq $\lambda$  whose parts are all distinct is given.
\begin{theorem}\label{connection:MainThm}
	Let $p,k\in\N$, and $\lambda\vdash pk$ with $l(\lambda)\leq p$.
	Then,
	\begin{enumerate}[(i)]
	\item
		if $\lambda$ is of the form (\glqq exceptional\grqq)
		\begin{displaymath}
			(pk),(k^{p}),(a^{p-1}),(b,c^{p-1}),(b^{p-1},c),
		\end{displaymath}
		we either have
		\begin{displaymath}
			a_{(p),(2dk)}^{2d\lambda}=a_{(1^{p}),((2d+1)k)}^{(2d+1)\lambda}=1,\quad a_{(p),((2d+1)dk)}^{(2d+1)\lambda}=a_{(1^p),(2dk)}^{2d\lambda}=0,\quad
			a_{\mu,(dk)}^{d\lambda}=0
		\end{displaymath}
		for all $d\geq 0$ and $\mu\vdash p$, $\mu\neq (p),(1^p)$, or
		\begin{displaymath}
			a_{(p),(dk)}^{d\lambda}=1,\quad
			a_{\mu,(dk)}^{d\lambda}=0
		\end{displaymath}
		for all $d\geq 0$ and $\mu\vdash p$, $\mu\neq(p)$,
	\item
		if $d=4$ and $\lambda=(2k,2k)$, then
		\begin{displaymath}
		\begin{array}{ll}
			a_{(4),(d)}^{(2d^2)}=
			\left\lfloor\frac{2d}{3}\right\rfloor-\frac{d}{2}+
			\begin{cases}
			1&d\mathrm{\ even}\\
			\frac{1}{2}&d\mathrm{\ odd}
			\end{cases},\quad &
			a_{(1^4),(d)}^{(2d^2)}=\left\lfloor\frac{2d}{3}\right\rfloor-\frac{d}{2}+
			\begin{cases}
			0&d\mathrm{\ even}\\
			\frac{1}{2}&d\mathrm{\ odd}
			\end{cases},\\[\bigskipamount]
			a_{(2,2),(d)}^{(2d^2)}=d-\left\lfloor\frac{2d}{3}\right\rfloor,&
			a_{(3,1),(d)}^{(2d^2)}=a_{(2,1^2),(dk)}^{(2d^2)}=0,
		\end{array}
		\end{displaymath}
		and if $\lambda=(b^2,c^2)$ for $b>c$, then $a_{\mu,(dk)}^{d\lambda}=a_{\mu,(d(k-a)}^{((b-c)^2))}$,
	\item
		and else $a_{\mu,(dk)}^{d\lambda}$ is a quasi-polynomial in $d$ of the same (positive) degree as $c_{p,dk}^{d\lambda}$ with constant leading term equal to $\frac{\dim(V_\mu)}{p!}$ times the leading term of $c_{p,dk}^{d\lambda}$
		for every $\mu\vdash p$.
	\end{enumerate}
\end{theorem}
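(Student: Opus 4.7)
The plan is to leverage the machinery already built up in the preceding sections—Proposition \ref{connection:prop1}, Theorem \ref{connection:Howe}, and Proposition \ref{connection:prop3}—to establish all three cases in a uniform manner. By Proposition \ref{connection:prop1}, $a^{d\lambda}_{\mu,(dk)}$ is the multiplicity of the Specht module $V_\mu$ in the $S_p$-representation $B_d$, whose dimension is $c^{d\lambda}_{p,dk}$. The strategy is thus dictated by the dimension and the $S_p$-decomposition of $B_d$, and splits into the same three regimes as the theorem statement.

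For part (i), I would begin by invoking Corollary \ref{connection:cor1} (or equivalently Corollary \ref{tensorasymp:cor2}) to observe that the exceptional partitions are exactly those with $\dim B_d = c^{d\lambda}_{p,dk} = 1$ for all $d \geq 0$. Hence each $B_d$ is a one-dimensional $S_p$-representation, which can only be the trivial or the sign character. To decide which, I would iterate Corollary \ref{plethysm:cor1}, peeling off the bottom row repeatedly to reduce each exceptional $\lambda$ down to the empty partition while tracking how $\mu$ is transposed at each odd step. For $\lambda = (k^p)$ (and analogously for the remaining exceptional shapes) the resulting character depends only on the parity of $dk$, which produces exactly the alternation between $a^{d\lambda}_{(p),(dk)} = 1$ and $a^{d\lambda}_{(1^p),(dk)} = 1$ asserted in (i).

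For part (iii), I would apply Theorem \ref{connection:Howe}: it suffices to show $PK = \{1\}$. Since $PK \triangleleft S_p$, Lemma \ref{connection:cor3} reduces the problem to exhibiting a single even permutation that acts non-trivially on $\bigoplus_{d} B_d$ (and, when $p = 4$, an element of the Klein four-group $V$ that does so). Proposition \ref{connection:prop3} supplies precisely such $\sigma \in A_p$ (respectively $\sigma \in V$) together with a highest weight vector $h_T \in B_1$ satisfying $\sigma \cdot h_T \neq h_T$, forcing $PK = \{1\}$. Combined with the non-decreasing, positive-degree, constant-leading-term behaviour of $\dim B_d$ established in Corollary \ref{connection:cor1}, Theorem \ref{connection:Howe} then yields the leading-term statement, since the asymptotic ratio $a^{d\lambda}_{\mu,(dk)} / c^{d\lambda}_{p,dk}$ tends to $\dim(V_\mu)/p!$ and both sides are quasi-polynomials of the same degree.

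For part (ii), I would first use Corollary \ref{plethysm:cor1} to reduce $\lambda = (b^2, c^2)$ with $b > c$ to the base case $\lambda = (2k, 2k)$, leaving only the latter to handle directly. Here Proposition \ref{connection:prop3} is deliberately not available, and indeed the Klein four-group fixes every $h_T$, so the generic argument genuinely breaks down. I would handle this last case by explicit enumeration: use Pieri's rule (equivalently the lattice-point description $P^{\lambda}_{k,p}$ from Proposition \ref{tensorasymp:prop1a}) to list the SSYTs, decompose the resulting small $S_4$-representation into $V_\mu$'s by computing the action on each $h_T$, and collect the closed-form floors. The main obstacle I expect is precisely Proposition \ref{connection:prop3}: it demands a delicate, shape-by-shape construction of $h_T$ and a careful coefficient bookkeeping via Lemma \ref{connection:lemma4} to certify $h_T \cdot \sigma \neq h_T$, and it must simultaneously isolate $(2k, 2k)$ at $p = 4$ as the sole genuine exception where the Klein four-group is forced to act trivially.
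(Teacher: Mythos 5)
Your plan for parts (ii) and (iii) is essentially the paper's: Theorem \ref{connection:Howe} together with Lemma \ref{connection:cor3} reduces (iii) to exhibiting a single non-fixed highest weight vector, which Proposition \ref{connection:prop3} supplies after first reducing to $l(\lambda)\leq p-1$ via Corollaries \ref{plethysm:cor1} and \ref{tensorasymp:cor1} --- a reduction you invoke for (ii) but which is equally needed in (iii) whenever $l(\lambda)=p$, since Proposition \ref{connection:prop3} is only stated for $l(\lambda)\leq p-1$.

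The genuine gap is in part (i). Corollary \ref{plethysm:cor1}, and the step-by-step subtraction hidden in its proof via Lemma \ref{tensorasymp:lemma2}, requires $l(\lambda)=p$. It handles $(k^p)$ directly and in one pass sends $(b,c^{p-1})$ to a single row and $(b^{p-1},c)$ to $((b-c)^{p-1})$, but it cannot be applied to $(pk)$ or $(a^{p-1})$, which have fewer than $p$ nonzero rows. Worse, your claim that ``the resulting character depends only on the parity of $dk$'' is false for $(a^{p-1})$: take $p=3$, $k=2$, $\lambda=(3,3)$. Then $dk=2d$ is always even, yet the unique highest weight vector $h_T$ at $d=1$ satisfies $h_T\cdot(1\,2)=-h_T$, so $a^{(3,3)}_{(1^3),(2)}=1$, while Theorem \ref{plethysm:Weintraub} gives $a^{(6,6)}_{(3),(4)}\geq1$, hence $=1$, at $d=2$; the one-dimensional $B_d$ alternates with the parity of $d$, not of $dk$. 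The two ingredients your proposal lacks, and which the paper uses to treat all exceptional shapes uniformly, are Weintraub's Theorem \ref{plethysm:Weintraub} to force $a^{2d\lambda}_{(p),(2dk)}=1$ (so $B_{2d}$ is the trivial representation), and the ring structure of $\bigoplus_{d\geq0}B_d$ as an integral domain, which gives $B_{2d+1}=B_1\cdot B_{2d}$ and hence determines all odd levels from $B_1$ alone.
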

\begin{proof}
	Let $\bigoplus_{d\geq 0}B_d$ be the graded algebra and $\beta_d:S_p\to\GL(B_d)$ the representations from \cref{connection:prop1},
	and let $PK$ be the subgroup of $S_p$ defined in \cref{connection:Howe}.
	We consider the case $p\neq 4$.

	First, suppose $\lambda$ is not of the form (\glqq exceptional\grqq)
	\begin{displaymath}
		(pk),(k^{p}),(a^{p-1}),(b,c^{p-1}),(b^{p-1},c),
	\end{displaymath}
	and let $\sigma\in PK$, $d\geq0$, $\tau\in S_p$,
	and $c_\sigma\in B_d$ with $\beta_d(\sigma)=c_\sigma\id$.
	
	Moreover, assume that $l(\lambda)\leq p-1$ and that $\lambda$ is not of the form $(pk),(a^{p-1})$.
	If we had $A_p\subset PK$,
	then for every $d\geq0$ only the sign representation and the trivial representation of $S_p$ would appear in $B_d$ by \cref{connection:cor3},
	on whom $A_p$ acts trivially.
	But $B_d$ is the space of highest weight vectors of weight $d\lambda$ in $(S^{dk}(V))^{\otimes p}$ by \cref{connection:prop1},
	and by \cref{connection:prop3} there is an even permutation $\sigma\in S_p$ and a highest weight vector $h$ of weight $\lambda$ such that $h\cdot\sigma\neq h$.	
	Therefore, we have $PK=\{1\}$,
	and \cref{connection:Howe} together with \cref{connection:prop1} implies
	\begin{displaymath}
		\lim\limits_{d\to\infty}\frac{a_{\mu,(dk)}^{d\lambda}}{c_{p,dk}^{d\lambda}}=\frac{\dim(V_\mu)}{p!}
	\end{displaymath}
	for any $\mu\vdash p$.
	
	Furthermore, $a_{\mu,(dk)}^{d\lambda}$ is a quasi-polynomial by \cref{plethysm:QuasiPolynomial},
	and $c_{p,dk}^{d\lambda}$ is a quasi-polynomial of positive degree with constant leading term by \cref{connection:cor1}.
	This implies that $a_{\mu,(dk)}^{d\lambda}$ is a quasi-polynomial in $d$ of the same (positive) degree as $c_{p,dk}^\lambda$ with constant leading term equal to $\frac{\dim(V_\mu)}{p!}$.
	
	Now assume $l(\lambda)=p$ and that $\lambda$ is not of the form $(k^p),(b^{p-1},c),(b,c^{p-1})$.
	Then
	\begin{displaymath}
		\lambda':=(\lambda_1-\lambda_p,\ldots,\lambda_{p-1}-\lambda_p)\vdash p(k-\lambda_p),
	\end{displaymath}
	is a partition with $l(\lambda')\leq p-1$ not of the form $(pk),(a^{p-1})$.
	Furthermore,
	\begin{displaymath}
		a_{\mu,(dk)}^{d\lambda}\overset{\cref{plethysm:cor1}}{=}
		\begin{cases}
			a_{\mu,(d(k-\lambda_p))}^{d\lambda'}&,\lambda_p\mathrm{\ even}\\
			a_{\mu^T,(d(k-\lambda_p))}^{d\lambda'}&,\lambda_p\mathrm{\ odd}
		\end{cases},\quad
		c_{p,dk}^{d\lambda}\overset{\cref{tensorasymp:cor1}}{=}
		c_{p,d(k-\lambda_p)}^{d\lambda'}
	\end{displaymath}
	for any $\mu\vdash p$.
	As $\dim(V_\mu)=\dim(V_{\mu^T})$ by the hook length formula \cite[4.12]{FH04},
	this yields the claim for $\lambda$ with $l(\lambda)=p$.
	
	Now assume $\lambda$ is of the form
	\begin{displaymath}
			(pk),(1^{pk}),(a^{p-1}),(b,c^{p-1}),(b^{p-1},c).
	\end{displaymath}
	For any $d\geq 0$, by \cref{tensorasymp:prop1} and \cref{connection:prop1}
	\begin{equation}\label{connection:eq2}
		\sum\limits_{\mu\vdash p}a_{\mu,(dk)}^{d\lambda}\dim(V_\mu)=
		\dim(B_{d})=c_{p,dk}^{d\lambda}=1.
	\end{equation}
	If we had $\{1\}=PK$,
	then by \cref{connection:Howe}
	\begin{displaymath}
		\lim\limits_{d\to\infty}\frac{a_{\mu,(dk)}^{d\lambda}}{c_{p,dk}^{d\lambda}}=\frac{\dim(V_\mu)}{p!}
	\end{displaymath}
	for any $\mu\vdash p$, yielding a contradiction to \cref{connection:eq2}, as there are multiple partitions of $p\neq 1$.
	Therefore, we have $PK\neq\{1\}$ and for any $d\geq 0$ only the sign or trivial representation of $S_p$ appears in $B_d$ by \cref{connection:cor3},
	and \cref{connection:eq2} yields that $B_d$ is either the sign or trivial representation for any $d\geq 0$.
	
	Furthermore,
	for any $d\geq0$ we have $a_{(p),(2dk)}^{2d\lambda}\geq 1$ by Weintraub's conjecture \cref{plethysm:Weintraub},
	which together with \cref{connection:eq2} yields $a_{(p),(2dk)}^{2d\lambda}=1$ and $a_{\mu,(2dk)}^{2d\lambda}=0$ for any $\mu\vdash p$, $\mu\neq (p)$.
	
	Now assume that $B_1$ is the sign representation of $S_p$.
	Then $B_1\cdot B_{2d}=B_{2d+1}$ is the sign representation for any $d\geq 0$, as $B_{2d}$ is the trivial representation, i.e., $a_{(1^p),((2d+1)k)}^{(2d+1)\lambda}=1$,
	and \cref{connection:eq2} implies $a_{\mu,((2d+1)k)}^{(2d+1)\lambda}=0$ for any $d\geq 0$ and $\mu\vdash p$, $\mu\neq (1^p)$.
	
	On the other hand, if $B_1$ is the trivial representation, than $B_1\cdot B_{2d}=B_{2d+1}$ is the trivial representation for any $d\geq 0$, as $B_{2d}$ is the trivial representation, i.e., $a_{(p),((2d+1)k)}^{(2d+1)\lambda}=1$,
	and \cref{connection:eq2} implies $a_{\mu,((2d+1)k)}^{(2d+1)\lambda}=0$ for any $d\geq 0$ and $\mu\vdash p$, $\mu\neq (p)$.
	
	Lastly, for $p=4$ by replacing $A_4$ by $V$ the argument works mutatis mutandis for all partitions apart from $(2k^2),(b^2,c^2)$, $b>c$.
	The case $b>c$ however reduces as above to $\lambda=(2,2)$, and then one can derive explicit formulas using the computations of Kahle-Micha\l ek, see \
	\begin{figure}[H]
	\centering
	\href{https://www.thomas-kahle.de/plethysm.html}{https://www.thomas-kahle.de/plethysm.html}
	\end{figure}
	as well as the appendix of the arXiv-version of their paper \cite{KM15}.	
	This concludes the proof.
\end{proof}
\begin{remark}
	When constructing highest weight vectors for $p=4$ and multiples of $\lambda=(2,2)$,
	one only gets highest weight vectors on which $V\subset S_4$ acts trivially.
	Since the Specht modules on which $V$ acts trivially are exactly those for $\mu=(4),(1^4),(2,2)$, with $V_{(2,2)}$ given by inflating the standard representation of $S_3$ along $S_4\to S_4/V\cong S_3$, this matches the formulas we get.
\end{remark}
\iffalse
\begin{enumerate}
	\item
	$\mu=4, \lambda=(2,2)$,
	get $(b_1,b_2,s)=(0,2s,s)$, and
	if $s$ is even:
	\begin{displaymath}
		[b1, b2, s] -> ((1 - 1/4 * b2) + floor((b2)/3)) : exists (e0 = floor((b2)/4): b1 = 0 and 2s = b2 and 4e0 = b2 and b2 >= 4);
	\end{displaymath}
	and if $s$ is odd
	\begin{displaymath}
		[b1, b2, s] -> ((1/2 - 1/4 * b2) + floor((b2)/3)) : exists (e0 = floor((b2)/4), e1 = floor((-2 + b2)/4): b1 = 0 and 2s = b2 and 4e1 = -2 + b2 and b2 >= 4 and 4e0 <= -1 + b2 and 4e0 >= -3 + b2); 
	\end{displaymath}
	\item
	$\mu=(3,1), \lambda=(2,2)$ just 0! ($\mu$ is standard rep.)
	\item
	$\mu=(2,2)$, $(b_1,b_2,s)=(0,2s,s)$
	\begin{displaymath}
		[b1, b2, s] -> (1/2 * b2 - floor((b2)/3)) : b1 = 0 and 2s = b2 and b2 >= 4; 
	\end{displaymath}
	\item
	$\mu=(1,1,1,1), \lambda=(2,2)$
	get $(b_1,b_2,s)=(0,2s,s)$, and
	if $s$ is odd
	\begin{displaymath}
        [b1, b2, s] -> ((1/2 - 1/4 * b2) + floor((b2)/3)) : exists (e0 = floor((-2 + b2)/4): b1 = 0 and 2s = b2 and 4e0 = -2 + b2 and b2 >= 4); 
	\end{displaymath}
	and if $s$ even
	\begin{displaymath}
    [b1, b2, s] -> (-1/4 * b2 + floor((b2)/3)) : exists (e0 = floor((-2 + b2)/4), e1 = floor((b2)/4): b1 = 0 and 2s = b2 and 4e1 = b2 and b2 >= 4 and 4e0 <= -3 + b2 and 4e0 >= -5 + b2);
	\end{displaymath}
	\item
		$\mu=(2,1,1)$ just 0!
\end{enumerate}
\fi

\bibliographystyle{amsplain}
\bibliography{Asymptotics_of_Plethysm_bib}

\end{document}